\documentclass[final,onefignum,onetabnum]{siamonline190516}

%% Used for \set, used in an example below
\usepackage{braket,amsfonts}

%% Used in table example below
\usepackage{array}

%%% Used in table and figure examples below
%\usepackage[caption=false]{subfig}
%%% Used for papers with subtables created with the subfig package
%\captionsetup[subtable]{position=bottom}
%\captionsetup[table]{position=bottom}

%% Used for PgfPlots example, shown in the "Figures" section below.
\usepackage{pgfplots}
\usepackage{booktabs} 

%% Language and font encodings
\usepackage[english]{babel}
\usepackage[utf8x]{inputenc}
\usepackage[T1]{fontenc}
\usepackage{curves} 
\usepackage{algorithm}
\usepackage{algorithmic}
%% Sets page size and margins
%\usepackage[a4paper,top=3cm,bottom=3cm,left=3cm,right=3cm,marginparwidth=1.75cm]{geometry}
%\usepackage{authblk}
%% Useful packages
\usepackage{amsmath}
\usepackage{amsfonts}
\usepackage{graphicx}
\usepackage{caption}
\usepackage{subcaption}

\usepackage[colorinlistoftodos]{todonotes}
\usepackage{pgfplots}
%\usepackage[colorlinks=true, allcolors=blue]{hyperref}

%% For referencing line numbers
\Crefname{ALC@unique}{Line}{Lines}

%\newsiamthm{theorem}{Theorem}
%\newsiamthm{lemma}{Lemma}
%\newsiamthm{proposition}{Proposition}
%\newsiamthm{corollary}[theorem]{Corollary}
\newsiamremark{assumption}{Assumption}
\newsiamremark {remark}{Remark}
\newsiamremark {example}{Example}
\newsiamthm {problem}{Inverse Problem}
\newsiamthm {conjecture}{Conjecture}

\usepackage{amsopn}

\newcommand{\R}{\mathbb{R}}

% 

%-- next three control sub/super scripts for iterations and training index
%\newcommand{\iter}[2]{{#1}^{#2}}
%\newcommand{\trni}[2]{{#1}_{#2}}
%\newcommand{\ittrni}[3]{{#1}_{#2}^{#3}}
%
\newcommand{\iter}[2]{{#1}_{#2}}
\newcommand{\trni}[2]{{#1}^{#2}}
\newcommand{\ittrni}[3]{{#1}^{#2}_{#3}}

\newcommand{\range}[1]{{\rm \bf rng}({#1})}
\newcommand{\kernel}[1]{{\rm \bf kern}({#1})}
\newcommand{\dom}[1]{{\rm \bf dom}({#1})}

\DeclareMathOperator*{\argmin}{arg\,min}

% Spaces

\newcommand{\dnabla}{\nabla^\dagger}
\newcommand{\TrueOp}{\ensuremath{A}}
\newcommand{\ApproxOp}{\ensuremath{\widetilde{A}}}
\newcommand{\CorrectedOp}{\ensuremath{A_\Theta}}
\newcommand{\CorrectedAd}{\ensuremath{A^*_\Phi}}
\newcommand{\ForwardCor}{\ensuremath{F_\Theta}}
\newcommand{\AdjointCor}{\ensuremath{G_\Phi}}

\newcommand{\RegularisationOp}{\ensuremath{R}}

\newcommand{\revision}[1]{#1}
\newcommand{\revisionTwo}[1]{{#1}}
%\date{\vspace{-3ex}}

%\title{On Learned Operator Correction
%%\Simon{Alt title : "On Learned Operator Correction" ?}
%}
%\author{Sebastian Lunz$^{1,*}$, Andreas Hauptmann$^{2,3,*}$, Tanja Tarvainen$^{3,4}$, \\ Carola-Bibiane Sch\"onlieb$^1$, and Simon Arridge$^3$ }
%\affil{\small{
%$\,^1$University of Cambridge, Applied Mathematics and Theoretical Physics \\
%$\,^2$University of Oulu, Research Unit of Mathematical Sciences \\
%$\,^3$University College London, Department of Computer Science \\
%$\,^4$University of Eastern Finland, Department of Applied Physics \\
%$\,^*$equal contribution}}

\title{On Learned Operator Correction in Inverse Problems\thanks{SL and AH contributed equally.\funding{
			Academy of Finland Projects 312123 and 312342 (Finnish Centre of Excellence in Inverse Modelling and Imaging, 2018--2025) and Projects 334817 and 314411,
			 Jane and Aatos Erkko Foundation,
			  British Heart Foundation grant NH/18/1/33511,
			  CMIC-EPSRC platform grant (EP/M020533/1),
			  EPSRC-Wellcome grant WT101957,
		 Leverhulme Trust project on Breaking the non-convexity barrier,
		 Philip Leverhulme Prize,
		 EPSRC grants EP/S026045/1, EP/T003553/1, EP/T000864/1, 
		 EPSRC Centre Nr. EP/N014588/1,
		 Wellcome Innovator Award RG98755,
		 the RISE projectsCHiPS and NoMADS,
		 Cantab Capital Institute for the Mathematics of Information,
		 Alan Turing Institute,
		 EPSRC grant EP/L016516/1 for the University of Cambridge Centre for Doctoral Training (Cambridge Centre for Analysis)
}}}
\author{Sebastian Lunz\thanks{University of Cambridge, Department of Applied Mathematics and Theoretical Physics (\email{lunz@math.cam.ac.uk}).}
	\and Andreas Hauptmann\thanks{University of Oulu, Research Unit of Mathematical Sciences; University College London, Department of Computer Science (\email{Andreas.Hauptmann@oulu.fi}). }
	\and Tanja Tarvainen\thanks{University of Eastern Finland, Department of Applied Physics; University College London, Department of Computer Science}
	\and Carola-Bibiane Sch\"onlieb\footnotemark[2]
	\and Simon Arridge\thanks{University College London, Department of Computer Science}
}

% Custom SIAM macro to insert headers
\headers{On Learned Operator Correction}{S. Lunz, A. Hauptmann, T. Tarvainen, C.-B. Sch\"onlieb, S. Arridge}

\begin{document}
\maketitle
\begin{abstract}
	We discuss the possibility to learn a data-driven explicit model correction for inverse problems and whether such a model correction can be used within a variational framework to obtain regularised reconstructions. This paper discusses the conceptual difficulty to learn such a forward model correction and proceeds to present a possible solution as forward-adjoint correction that explicitly corrects in both data and solution spaces. We then derive conditions under which solutions to the variational problem with a learned correction converge to solutions obtained with the correct operator. The proposed approach is evaluated on an application to limited view photoacoustic tomography and compared to the established framework of Bayesian approximation error method.
\end{abstract}

%\tableofcontents

\section{Introduction}
In inverse problems 
%based approaches to  imaging, 
%\Simon{Note : this assumes that we are \emph{ab initio} only discussing imaging, rather than more general problems. I guess that is fine, given everything that follows. Should that be reflected in the title perhaps ?}
it is % desirable
usually considered imperative to have an accurate forward model of the underlying physics. Nevertheless, such accurate models can be computationally highly expensive due to possible nonlinearities, large spatial and temporal dimensions as well as stochasticity. Thus, in many applications approximate models are used in order to speed up reconstruction times and to comply with hardware and cost restrictions. As a consequence the introduced approximation errors need to be taken into account when solving ill-posed inverse problems or a degradation of the reconstruction quality can be expected.

For instance, in classical computerised tomography with a relatively high dose, models based on ray transforms are sufficiently accurate for the 
reconstruction task, whereas the full physical model would incorporate stochastic X-ray scattering events. %\CBS{Say what the accurate model is? Exponential ray-transform?} 
Nevertheless, in some cone beam computerised tomography applications the dose is typically relatively low with a large field of view and hence scattering 
becomes more prevalent \cite{siewerdsen2001cone} and simple models based on the 
ray transform are not enough to guarantee sufficient image quality. However, as these scattering events are stochastic, accurate models would be too expensive for practical image reconstruction. Therefore, the basic model is used as approximation with an appropriate correction that accounts for the full physical phenomena \cite{zhu2009scatter}. 
%\CBS{Please check my rewrite above}
%to improve imaging quality. 

In applications where the forward model is given by the solution of a partial 
differential equation, model reduction techniques are often used to reduce 
computational cost~\cite{borcea2020ModelReduction,Freund2003ModelReduction,smyl2019less}. Such reductions lead to known approximation errors in the model and can 
be corrected for by explicit modelling \cite{arridge2006a,Kaipio2004a}. 
Recently, with the 
possibility to combine deep learning techniques with classical variational
methods, approximate models are now also used in the framework of learned image 
reconstruction \cite{hauptmann2018MLMIR}. In this case, the approximate model is 
embedded in an iterative scheme and updates are performed by a convolutional
neural network (CNN). Here, model correction is performed \emph{implicitly} by 
the network while computing the iterative updates.

In this paper we investigate the possibility to correct such approximation errors \emph{explicitly} with data-driven methods, in particular 
using a CNN. In what follows, we restrict ourselves to linear inverse problems, \revision{with both theory and experiments considering the linear case only. However, we expect many of the challenges and approaches discussed here to be relevant and to give insight into the non-linear case as well.} 
Let $x\in X$ be the unknown quantity of interest we aim to reconstruct from measurements $y\in Y$, where $X$, $Y$ are Hilbert spaces 
% \Sebastian{Is Banach spaces sufficient or do we want to assume Hilbert to make things easier} 
and $x$ and $y$ fulfil the relation
\begin{equation}\label{eqn:invProb}
Ax=y,
\end{equation}
where $A:X\to Y$ is the accurate forward operator modelling the underlying physics \revision{sufficiently accurate for any systematic error to be well below the noise level of the acquisition}.
%\Andreas{Note on linear operator}
We assume that the evaluation of accurate operator $A$ is computationally expensive and we 
rather want to use an approximate model $\ApproxOp:X\to Y$ to compute $x$ from $y$. In doing so, we 
%consequently 
introduce an inherent approximation error in \eqref{eqn:invProb} and 
have 
\begin{equation}\label{eqn:invProb_approxModel}
\ApproxOp x=\widetilde{y}. %=y+\delta y.
\end{equation}
leading to a systematic model error 
\begin{equation}\label{eqn:SystematicApproximationError}
\delta y = y - \widetilde{y}.
\end{equation}
\begin{remark}\label{remark:ApOpRangeDomain}
	In general, the range and domain of $\ApproxOp$ might be different to those of $A$. To simplify the remainder of this paper we assume, unless otherwise stated, that appropriate projections between the range and domain of the approximate operator $\ApproxOp$ as well as the range and domain of the accurate operator $\TrueOp$ 
	% \CBS{do you mean here range and domain of $\ApproxOp$ or approximations to the range and domain of $A$? I guess the latter.} 
	are included in the implementation of $\ApproxOp$, so that expressions such as \eqref{eqn:SystematicApproximationError} are well defined. 
	%\Simon{I added this remark as a get-out-of-jail version of my previous comments. It is a bit woolly but maybe it suffices.}
\end{remark}

%\Simon{I defined \ref{eqn:SystematicApproximationError} this way so it fits with section\ref{sect:AEM}. }\\

%\Sebastian{Comment on why we consider correction as composition? - yes} 

In this work, we consider corrections for this approximation error via a 
parameterisable, possibly nonlinear, mapping $\ForwardCor:Y\to Y$, applied as a correction to $\ApproxOp$. This leads to a corrected operator $\CorrectedOp$ of the form
\begin{equation}\label{eqn:correctedModel}
\CorrectedOp=\ForwardCor\circ\ApproxOp.
\end{equation}
We aim to choose the correction $\ForwardCor$ such that ideally $\CorrectedOp(x) \approx \TrueOp x$ for some $x\in X$ of interest. Restricting the corrected operator $\CorrectedOp$ to be a composition 
of the approximate operator $\ApproxOp$ and a parameterisable correction yields 
various advantages compared to fully parameterising the corrected operator $\CorrectedOp: X \to Y$, without utilising the knowledge of $\ApproxOp$. 
It avoids having to model the typically global dependencies of $\TrueOp$ in the 
learned correction and allows us to employ generic 
%convolution-based neural 
network architectures for $\ForwardCor$, such as the popular U-Net \cite{Ronneberger2015}.

The primary question that we aim to answer is, whether such corrected 
models \eqref{eqn:correctedModel} can be subsequently used in variational regularisation
approaches that find a reconstruction $x^*$ as
\begin{equation}\label{eqn:varProbIntro}
x^* = \argmin_{x\in X} \frac{1}{2}\|\CorrectedOp(x) - y\|_Y^2 + \lambda \RegularisationOp(x)
\end{equation}
with regularisation functional $\RegularisationOp$ and associated 
hyper-parameter $\lambda$.  Apart from investigating the practical performance of \eqref{eqn:varProbIntro}, we will discuss conditions on the model correction that need to be satisfied to guarantee convergence of solutions to \eqref{eqn:varProbIntro} to the accurate solution \revision{as the corrected operator $\CorrectedOp$ approaches the accurate operator $\TrueOp$}. We provide theoretical results, which show that variational regularisation strategies can be applied under certain conditions.
In particular, as we will discuss in this study, while it is fairly easy to learn a model correction that fulfils \eqref{eqn:correctedModel}, it cannot be readily guaranteed to yield high-quality reconstructions when used within the variational problem \eqref{eqn:varProbIntro}.
This is a conceptual difficulty caused by a possible discrepancy in the range of the adjoints of $A$ and $\ApproxOp$ that can be an inherent part of the approximate model and hence first order methods to solve \eqref{eqn:varProbIntro} yield non-desirable results. 
%\Andreas{Did some additional rewriting of the above}
%\Sebastian{I tried incorporating Carola's comment. Please check.}

% \CBS{Do you mean the well-posedness of \eqref{eqn:varProbIntro} cannot be guaranteed? Maybe clearer to say this}. 
% Even though this might seem like a contradiction \CBS{does it?}, 
% This is a conceptual difficulty caused by possible discrepancies in the mapping properties of $A$ and $\ApproxOp$, or more 
% precisely their adjoints, when computing the gradient to solve \eqref{eqn:varProbIntro}. \CBS{I do not understand this sentence. Can we say this differently? Explicitly stating that the issue comes from the adjoint?} 

To overcome this restriction, we introduce a 
forward-adjoint correction that combines an explicit forward model correction 
with an explicit correction of the adjoint. We will show that such a 
forward-adjoint correction -if trained sufficiently well- provides a descent direction for a gradient 
scheme to solve \eqref{eqn:varProbIntro} for which we can guarantee convergence to a neighbourhood of the solution obtained with the accurate operator $\TrueOp$. 
%\Sebastian{Reformulated this sentence. Check pls.}
% \CBS{Check that the reformulated sentence matches the convergence result} 

% In particular, we aim to clarify how an (explicit) 
% data-driven model correction can be employed for inverse problems and discuss 
% difficulties of such an approach in detail. \CBS{Do we need this last sentence? Seems repetitive.}

%Motivated by the recent success of m
This work fits into the wider field of learned image reconstruction techniques 
that have sparked large interest in recent years \cite{arridge2019solving,Jin2017,Kang2017}. In particular, we are motivated by 
model-based learned iterative reconstruction techniques that have shown to be highly 
successful in a variety of application areas \cite{Adler2017,Adler2018,hammernik2018learning,Hauptmann2018, schlemper2017deep}.  
These methods generally mimic iterative gradient descent schemes and demonstrate
impressive reconstruction results with often considerable speed ups \cite{hauptmann2019multi}, 
but are mostly empirically motivated and lack convergence guarantees. In contrast,
this paper follows a recent development of understanding how deep learning methods 
can be combined with classical reconstruction algorithms, such as variational 
techniques, and retaining established theoretical results on convergence. Whereas 
most studies are concentrated on learning a regulariser  \cite{kobler2020total,li2020nett, lunz2018adversarial, schwab2019deep}, we here 
concentrate on the operator only and keep a fixed, analytical form for the regulariser. Further, related \revision{works that consider} learned corrections by utilising explicit knowledge of the operator range are \cite{boink2019learned,bubba2019learning,schwab2019deep}.
Another line of research examines the incorporation of imperfectly known forward operators in a fully variational model \cite{burger2019convergence,korolev2018image}
as well as perturbations in \cite{egger2015identification,lorz2019parameter}. We note also the connection to the concept of calibration in a Bayesian setting \cite{kennedy2001}. 

This paper is organised as follows. In Section \ref{sec:learnModelCor}, we introduce the concept of model correction and compare to previous work in the field. In Section \ref{sec:forwardCorrection}, we discuss forward corrections and demonstrate their limitations. To overcome these limitations, we introduce in the following the forward-adjoint corrections in Section \ref{sect:ForwardBackwardCorr}, where we also present convergence results for this correction. This is followed by a discussion of computational challenges and the experimental setup in Section \ref{sec:ComputationalConsiderations}. Finally, in Section \ref{sec:computationalResults}, we  demonstrate the performance of the discussed approaches on two data sets for limited-view photoacoustic tomography.

\subsection*{Glossary}
To improve readability throughout the paper we provide a Glossary with the definition of frequently used notation.

\begin{table}[ht!]
	\begin{tabular}{|l|l|l|}\hline
		Symbol & Description & Definition \\ \hline
		$ X$ & Reconstruction space & Hilbert Space, Norm $\| \cdot \|_X$, Product $\langle \cdot, \cdot \rangle_X$ \\
		$ Y$ & Measurement space & Hilbert Space, Norm $\| \cdot \|_Y$, Product $\langle \cdot, \cdot \rangle_Y$ \\ %\hline  
		$ A$ & Exact forward operator & $A:X\to Y$ \\ %\hline  
		$\ApproxOp$ & Approximate forward operator & $\ApproxOp :X\to Y$  \\ %\hline
		$\ForwardCor$ & Parameterisable correction in $Y$ & $\ForwardCor:Y\to Y$ \\ %\hline
		$\AdjointCor$ & Parameterisable correction in $X$& $\AdjointCor:X\to X$  \\
		$\CorrectedOp$ & Corrected forward operator  & $\CorrectedOp: X \to Y$, $\CorrectedOp = \ForwardCor \circ \ApproxOp$  \\
		$\CorrectedAd$ & Corrected adjoint & $\CorrectedAd: Y \to X$, $\CorrectedAd = \AdjointCor \circ \ApproxOp^*$ \\
		$Df(t)$ &Fr{\'{e}}chet derivative of $f$ at $t$ & $Df(t) :\dom f \to \range f$\\
		&&$f(t+\delta t) = f(t) +Df(t)\delta t + \mathcal{O}(\delta t^2) $ \\
		$R$ & Regularisation functional & $R :X \to \R_+$ \\
		$\mathcal{L}$ & Variational functional with $\TrueOp$ & $\mathcal{L}(x) = \frac{1}{2}\|\TrueOp x - y\|^2_Y + \lambda R(x)$  \\
		$\mathcal{L}_\Theta$ & Variational functional with $\CorrectedOp$ & $\mathcal{L}_\Theta(x) =\frac{1}{2} \|\CorrectedOp(x) - y\|^2_Y + \lambda R(x)$  \\
		& & 
		\\\hline
	\end{tabular}
	\caption{ }
	%\caption{Definition of frequently used notation\label{tab:glossary}}
\end{table}

% %%%%%%%%%%%%%%%%%%%%%%%%%%%%%%%%%%%%%%%%%%
% %%% 		Model Correction 		   %%%
% %%%%%%%%%%%%%%%%%%%%%%%%%%%%%%%%%%%%%%%%%%

\section{Learning a model correction}\label{sec:learnModelCor}

As we have motivated above, we only consider an explicit model correction \eqref{eqn:correctedModel} in this study and leave the regularisation term untouched. Therefore, we will discuss in the following how a model correction using data driven methods is possible and what the main challenges are. 

Before we turn to the discussion of an \emph{explicit} correction, it is important to make the distinction to an \emph{implicit} correction in the framework of learned iterative reconstructions. In particular, we concentrate here on learned gradient schemes \cite{Adler2017}, which can be formulated by a network $\Lambda_\Theta$, that is 
designed to mimic a gradient descent step. In particular, we train the networks to perform an iterative update, such that
\begin{equation}\label{eqn:LGS}
\iter{x}{k+1}=\Lambda_\Theta \left(\nabla_x \frac{1}{2}\|A\iter{x}{k}-y\|_Y^2,\iter{x}{k}\right),
\end{equation}
where $\nabla_x \frac{1}{2}\|A\iter{x}{k}-y\|_Y^2 = A^*(A\iter{x}{k}-y)$. Now, one could use an 
approximate model instead of the accurate model and compute an approximate 
gradient given by $\ApproxOp^*(\ApproxOp \iter{x}{k}-y)$ for the update in \eqref{eqn:LGS}, as proposed in \cite{hauptmann2018MLMIR}. The network $\Lambda_\Theta$ then \emph{implicitly} corrects the model error to produce the new iterate.
That means, the correction and a prior are hence trained simultaneously with the update in \eqref{eqn:LGS}. Such approaches are typically trained by using a loss function, like the $L^2$-loss, to measure the distance between reconstruction and a ground truth phantom.

On the other hand, in the \emph{explicit} approach that we pursue here, we aim to learn a correction $\CorrectedOp$ that is independent of the regularisation use. It can hence be trained using knowledge of the accurate and approximate operator alongside training data in either $X$ or $Y$, without requiring pairs of measurements and their corresponding ground truth phantoms. In a scenario where the operators cannot been accessed directly, samples of pairs from the two operators can even be sufficient to fit an explicit operator correction. While implicit methods have been shown to perform well in practice \cite{hauptmann2018MLMIR}, our approach will yield an explicit correction and as such can be used in combination with any \revision{regularisation functional} and builds on the established variational framework. Furthermore, we note that the study of explicit methods also allows to uncover and investigate some of the fundamental challenges of model correction that might easily be left ignored in implicit approaches.

%In particular, we note that first order methods are especially popular within the learning community...

Thus, we will concentrate our discussion in the following on how an \emph{explicit} data correction can be achieved, how the correction of the model $\ApproxOp$ can be parametrised by a neural network, and how this can be incorporated into a variational framework.

%\Simon{
%\subsection{Training sets}
%Introduce concepts and notation of training sets here  ? 
%}

\subsection{Approximation error method (AEM)\label{sect:AEM}}

A well-established approach to incorporate model correction into a reconstruction framework, such as \eqref{eqn:varProbIntro}, is given by Bayesian approximation error modelling \cite{Kaipio2004a,Kaipio2007}. 
Let us shortly recall, that in Bayesian inversion we want to determine the posterior distribution of the unknown $x$ given $y$, and by Bayes' formula we obtain 
\begin{equation}\label{eqn:Bayes}
p(x|y) = p(y|x) \frac{p(x)}{p(y)}.
\end{equation}
Thus, the posterior distribution is characterised by the likelihood $p(y|x)$ and the chosen prior $p(x)$ on the unknown. Typically, the likelihood $p(y|x)$ is modelled using accurate knowledge of the forward operator $A: X \to Y$ as well as the noise model. In the approximation error method, the purpose is now to adjust the likelihood by examining the difference between the (accurate) forward operator $A$ and its approximation $\ApproxOp$  of the model  \eqref{eqn:invProb}--\eqref{eqn:invProb_approxModel} as
\begin{equation}\label{eqn:modelErrorBAE}
\varepsilon  =  \delta y = Ax-\ApproxOp x. 
\end{equation}
Including an additive model for the measurement noise $e$, this leads to an observation model  
\begin{equation}\label{eqn:observationModelBAE}
y =  \ApproxOp x + \varepsilon + e.
\end{equation} 
We model the noise $e$ independent of $x$ as Gaussian $e \sim \mathcal{N}(\eta_e,\Gamma_e)$,
where $\eta_e$ and $\Gamma_e$ are the mean and covariance of the noise.
Further, the model error $\varepsilon$ is approximated as Gaussian $\varepsilon \sim \mathcal{N}(\eta_{\varepsilon},\Gamma_{\varepsilon})$ and is modelled independent of noise $e$ and unknown parameters $x$  leading to a Gaussian distributed total error $n=\varepsilon+e$, $n \sim \mathcal{N}(\eta_n,\Gamma_n)$,  where $\eta_{\varepsilon}$ and $\eta_n$ are means and $\Gamma_{\varepsilon}$ and $\Gamma_n$ are the covariance matrices of model error and total errors, respectively.
This leads to a so-called enhanced error model \cite{Kaipio2004a} with a likelihood distribution of the form
\[
p(y|x) \sim \exp \left( - \frac{1}{2}\| L_n(\ApproxOp x - y + \eta_n) \|_Y^2 \right)
\]
where $L_n^{\rm T}L_n=\Gamma_n^{-1}$  is a matrix square root such as the Cholesky decomposition of the inverse covariance matrix of the total error.
In the case of Gaussian white noise with  a zero mean and a constant standard deviation $\sigma$, this can be written as
\[
p(y|x) \sim \exp \left( - \frac{1}{2\sigma}\| L_{\varepsilon}(\ApproxOp x - y + \eta_{\varepsilon}) \|_Y^2 \right)
\]
where $L_{\varepsilon}^{\rm T}L_{\varepsilon}=\Gamma_{\varepsilon}^{-1}$. 
This could be used to motivate writing the variational problem \eqref{eqn:varProbIntro} in a form
\begin{equation}\label{eqn:varProbBAE}
x^* = \argmin_{x\in X} \frac{1}{2}\|L_{\varepsilon} (\ApproxOp x - y + \eta_{\varepsilon}) \|_{\revision{Y}}^2 + \lambda \RegularisationOp(x).
\end{equation}

In order to utilise the approach, the unknown distribution of the model error 
needs to be approximated. 
That can be obtained for example by simulations \cite{arridge2006a,tarvainen2013} as follows. 
Let $\lbrace \trni{x}{i}, i = 1,\ldots , N \rbrace$ be a set of samples drawn from a training set.  
The corresponding samples of the model error are then
\begin{equation}
\trni{\varepsilon}{i} = A\trni{x}{i}-\ApproxOp\trni{x}{i} 
\end{equation}
and the mean and covariance of the model error can be estimated from the samples as 
\begin{align}
\eta_{\varepsilon} & = \frac{1}{N} \sum_{i = 1}^{N} \trni{\varepsilon}{i} \\
\Gamma_{\varepsilon} & = \frac{1}{N-1} \sum_{i = 1}^{N} \trni{\varepsilon}{i}(\trni{\varepsilon}{i})^{\rm T} -\eta_{\varepsilon}\eta_{\varepsilon}^{\rm T}.
\end{align}

% -- I think we can drop this now, considering Remark 1.1 ---
%\Simon{Sorry to keep messing with the notation but I think we are missing something here. Shouldn't we define
%\[
%\ApproxOp:\widetilde{X}\to \widetilde{Y},\quad \widetilde{X}\subset X, \widetilde{Y}\subset Y 
%\]
%together with projection (or "restriction" )  operators $Q_X : \widetilde{X}\to X $,  $Q_X : \widetilde{Y}\to Y $ and corresponding interpolation (or "prolongation" operators $P_X : \widetilde{X}\to X $,  $P_X : \widetilde{Y}\to Y %$; 
%typically we might have $P = Q^{\ast}$. [The restriction/prolongation terminology comes from multigrid approaches] . Then we will have 
%\[
%    \ApproxOp_{PQ} x = P_Y\ApproxOp Q_X x=\widetilde{y}
%\]
%Then we are going to argue that in AEM we have an explicit $Q_X$ and we model $P_Y$ by an additive random variable, whereas in our learned approach we model $P_Y$ with $F_{\Theta}$ and we have so far ignored $Q_X$. Then in the adjoint case $P_X \simeq Q_X^{\ast}$ is implied within  AEM, and in learned approach we learn $G_{\phi}$ which takes care of the (adjoint) of $Q_X$. This makes things more consistent. There is a problem then with the toy example, because we are leaving out $Q_X$ and $P_Y$, although we can still use it to make the point.
%}

\subsection{Learning a general model correction}\label{sec:learnedDataCor}
%\CBS{What is different now to what we have done in the previous section? Would be nice to connect the two. Otherwise, we are starting the discussion about how one can set up and find a operator correction anew from one section to the other. What works well with the approximation error model from before? What do we want to improve? And maybe one can introduce our search for a distance between $\TrueOp$ and $\CorrectedOp$ by saying that we want to set up a learning framework for $\CorrectedOp$ with an appropriate loss function and an appropriate parametrisation for the operator... the former requiring a distance notion.} 

The classical Bayesian approximation error method provides an affine linear correction of the likelihood in \eqref{eqn:varProbBAE} and by construction is limited to cases where the error between accurate and approximate model  \eqref{eqn:modelErrorBAE} can be approximated as normally distributed. As this can be too restrictive in certain cases to describe more complicated errors, we will now address a more general concept of learning a  nonlinear \emph{explicit} model correction.

That is, given an accurate underlying forward model $\TrueOp$, we aim to find a %corrected 
(partially) learned operator $\CorrectedOp$ which we consider as an explicitly corrected approximate model of the form~(\ref{eqn:correctedModel}). 
To do so, we need to set a notion of distance between $\TrueOp$ and $\CorrectedOp$ in order to assess the quality of the approximation. A seemingly natural notion of distance between two operators would be the supremum norm over elements in $X$, that is we consider here
\begin{align}
\label{equ:UniformError}
\|\TrueOp - \CorrectedOp\|_{X\to Y} &:= \sup_{x\in X : \|x\|=1} \|\TrueOp x - \CorrectedOp(x)\|_Y.
\end{align}
However, in many relevant applications it is impossible to find a correction of the form $\CorrectedOp = \ForwardCor \circ \ApproxOp$ that achieves low uniform approximation error, making this notion of distance too restrictive.
% \Andreas{Maybe a short additional explanation, that this would lead to a constant correction? Maybe this is mainly a hypothesis?}
For instance, if we consider the case of a learned \emph{a-posteriori} correction of some approximate model $\ApproxOp$ with a parameterisable mapping $\ForwardCor:Y\to Y$ that fulfils \eqref{eqn:correctedModel},
%\begin{align}
%    \label{equ:ForwardCorr}
%    \CorrectedOp = \ForwardCor \circ \ApproxOp.
%\end{align}
%\Simon{Same as eq.\ref{eqn:correctedModel}; just refer to that ?}
%Now, 
then the approximate model $\ApproxOp$ can exhibit a nullspace $\kernel{\ApproxOp}$ that is different from that of the accurate operator and, in particular, is potentially much larger. Thus, there may exist a (or several) $v \in \kernel{\ApproxOp}$ with $\TrueOp v \neq 0$. Any corrected operator $\CorrectedOp = \ForwardCor \circ \ApproxOp$ 
then exhibits an error in the sense of \eqref{equ:UniformError} of at least $\|\TrueOp v\|_Y$, as 
\begin{align*}
\| \TrueOp - \CorrectedOp \|_{X\to Y} &\geq \max\{  \| \TrueOp v - \ForwardCor(0) \|_Y, \| \TrueOp(-v) - \ForwardCor(0) \|_Y \} \\
&\geq \min_{y \in Y} \max \{  \| \TrueOp v - y \|_Y, \| - \TrueOp v - y \|_Y \} \\
&= \| \TrueOp v \|_Y,
\end{align*}
\revision{where in the last equality we have used that the point minimising the maximum of the distance to other two points is the centre of the line through those points. In our case, the centre of the line between $\TrueOp v$ and $-\TrueOp v$ is always the origin of the coordinate system $0$, independently of the choice of $\TrueOp$ and $v$.} 
In other words, the information in direction $v$ is lost in the approximate model and would need to be recovered subsequently by the correction $\ForwardCor$. If there are several such non-trivial $v \in \kernel{\ApproxOp}$, a uniform correction becomes increasingly difficult in the form of \eqref{equ:UniformError}.
%\Andreas{Possible comparison to AEM, as Simon mentioned with an offset. Which should still correct it only in the mean?}
% \Simon{Why is that the case ? Why cannot $\ForwardCor$ add an offset $h = Av$. Even an Affine Mapping could do that. That is something that Approximation  Error Model (AEM) does do by adding $\overline{\xi}$, which is the mean of the approximation error. - Add an explanation!} 
We will illustrate this difficulty in the following Section \ref{Sec:DownsamplingToyCase}.
\\
\\
While aiming for a uniform correction is unpractical, it can nevertheless be possible to correct the operator $\ApproxOp$ using an \emph{a-posteriori} correction as in \eqref{eqn:correctedModel}, provided a weaker notion of operator distance is employed. Here, we propose an empirical, learned notion of operator correction, that is optimised for a training set of points $\{\trni{x}{i}, i = 1,\ldots,N\}$, similar to Section \ref{sect:AEM}. 
%\Simon{can we call this training set $\lbrace \trni{x}{i}, i = 1,\ldots , N_{\rm training} \rbrace$ rather than data. We think of data as belonging to $Y$}
More precisely, we examine the average deviation of $\CorrectedOp$ from $\TrueOp$ as
%\Andreas{Shall we write this as sum or expectation?}
\begin{align}
\label{equ:StatisticalCorrection}
\frac{1}{N} \sum_{i}^N \|\CorrectedOp(\trni{x}{i}) - \TrueOp\trni{x}{i}\|_Y,
\end{align}
in a suitable norm $\|\cdot\|.$
%\Simon{In AEM $\mathbb{E}\left[ (M + \ApproxOp)x - A x\right] = 0$  by definition.}
In this notion, it is sufficient for the operators to be close in the mean for a given training set and hence we call this a \emph{statistical} or \emph{learned} correction with respect to the chosen training set. 
% Requiring such a mean condition can potentially solve some issues with kernel directions as previously outlined. 
For instance,  if the kernel direction $v \in \kernel{\ApproxOp}$ is orthogonal to the sample $x^i$, the information lost in direction $v$ is not crucial for representing the data of interest. Alternatively, 
%on the data $x_i$, 
the kernel direction $v$ might be highly correlated with another direction $w\notin \kernel{\ApproxOp}$ in the sense that $\langle \trni{x}{i} , v \rangle \approx \langle \trni{x}{i}, w \rangle$ for all $i$. Then the result of $\TrueOp v$ can be inferred from $\ApproxOp w$, even though $\ApproxOp v = 0$.
\\
To conclude this section, we note that in many cases we cannot hope to find a uniform model correction, but that correcting the model error can be still attempted using the notion of learned correction, quantified by \eqref{equ:StatisticalCorrection}. This is possible even if the operators $\TrueOp$ and $\ApproxOp$ are exhibiting different kernel spaces, as long as the training set $\{\trni{x}{i}, i = 1,\ldots,N\}$ exhibits sufficient structure to compensate for the loss of information in the approximate model.

\begin{remark}
	%We note in this context that we are 
	We consider non-linear corrections $\CorrectedOp = F_\Theta \circ \ApproxOp$ in this paper even when correcting a linear operator $\TrueOp$ from a linear approximation $\ApproxOp$, as in our computational examples. We have three main motivations to do so. Firstly, there are well-established nonlinear  network architectures, such as U-Net \cite{Ronneberger2015}, that are highly powerful and in fact have considerably fewer parameters than a fully parametrised linear map when the method is applied to applications in 3D, making the non-linear approach scalable. Secondly, when considering nonlinear corrections, a generalisation to the context of nonlinear operators will be easier. Finally and most importantly, while the operators $\TrueOp$ and $\ApproxOp$ might be linear, the region of interest in image and data space where we need a good correction is highly nonlinear, in the sense that the samples $\trni{x}{i}$ in \eqref{equ:StatisticalCorrection} are drawn from a distribution with nonlinear support. This makes nonlinear corrections considerably more powerful in correcting model errors than their linear counterparts.
\end{remark}

\subsubsection{A toy case: downsampling}
\label{Sec:DownsamplingToyCase}
In order to illustrate the challenge of a learned operator correction, we consider a toy case. Here, the accurate forward model $\TrueOp$ is given by a downsampling operator with an averaging filter, while the approximate model $\ApproxOp$ simply skips every other %line. 
sample. Concretely, we consider $x\in \R^n$, $y\in\R^{n/2}$ and $\ApproxOp,A\in \R^{n/2\times n}$, given by
\begin{equation}\label{equ:downsamplingOps}
A=\begin{pmatrix} 
\frac{1}{2}&\frac{1}{4} &                         &  &  \\
&\frac{1}{4} & \frac{1}{2}&\frac{1}{4}  \\
&  & \ddots &\ddots & \ddots &  \\
&  &  & \frac{1}{4} & \frac{1}{2} & \frac{1}{4} 
\end{pmatrix}, \text{ and }  
\ApproxOp=\begin{pmatrix} 
1& 0 &   &  &  \\
& 0& 1&0  \\
&  & \ddots &\ddots & \ddots &  \\
&  &  & 0 & 1 & 0 
\end{pmatrix}.
\end{equation}

%\Tanja{We have the same subscript $i$ in the toy case to indicate columns and elsewhere to indicate training samples. I don't think there is really a danger from misunderstanding, but should we still change notations here?}
Clearly, both operators have very different kernel spaces, with $\TrueOp$ vanishing on inputs \revision{of even magnitude} with alternating sign, whereas $\ApproxOp$ vanishes for every $v$ with $v[j]=0$, with index $j$ \revision{\emph{even}}, and any value for $j$ \revision{\emph{odd}}. In other words, the null space is spanned by the unit vectors with \revision{odd} index, $\kernel{\ApproxOp}=\{\mathbf{e}_j\ | \ 0<j\leq n, \ j \ \revision{\textit{even}} \}$.
% \CBS{Should be the nullspace of $\ApproxOp$?}
In fact, by the same argument as above, these $v\in \kernel{\ApproxOp}$ with $\|v\|_\infty = 1$ are such that the uniform approximation error for any correction will be $\|\TrueOp v - \ForwardCor( \ApproxOp v)\|_{\infty} \geq \|A v\|_\infty \geq 0.25 $ for all $v\in \kernel{\ApproxOp}$.

This example exhibits the two features described in the previous section: Firstly, a uniform correction in the sense of \eqref{equ:UniformError} is impossible due to different kernel spaces. However, a learned correction in the mean \eqref{equ:StatisticalCorrection} is possible on some data $\{\trni{x}{i}, i = 1,\ldots,N\}$ consisting of piecewise constant functions: On these samples the two operators $\ApproxOp$ and $\TrueOp$ already coincide everywhere %but near edges, 
except near jumps, where a weighted average can be employed to correct the approximation error.

%\Simon{This example is a nice one. It gives good visual illustration as to what the problem
%is \emph{viz.} the null space of  $\ApproxOp$. Are we able to demonstrate what happens
%exactly with a component of the  $\kernel{\ApproxOp)$ ? Another example would be low-pass
%filtering. In that case we could show an explicit high-frequency component for elements of
%$\kernel{\ApproxOp)$. Or we could even use a wavelet basis with removal of higher scales
%(this would give compact elements in null-space which might be more compelling)  I also
%wonder if it makes sense to look the other way, e.g.$\ApproxOp,A\in \R^{n\times n/2}$.
%There are many possibilities. It could be useful to go into these in some depth. }\\

%\CBS{As Simon remarked, we need to say somewhere here that for demonstration purposes we treat $\CorrectedOp$ as linear but that indeed the learned corrected operator will eventually be represented by a neural network which is nonlinear.} 
%\Sebastian{Did we necessarily assume this?} \Andreas{The correction can be nonlinear, we reformulated the part on uniform correction in 2.2 and hopefully made it a bit clearer}

\subsection{Solving the variational problem}
We now aim to solve an inverse problem given the corrected model $\CorrectedOp$ by solving the associated variational problem  \eqref{eqn:varProbIntro}. %with some regularisation incorporated by $R(x)$.
In this context it is natural to require that the solutions of the two minimisation problems, involving the operator correction $\CorrectedOp$ and $\TrueOp$, are close, that is 
%\Andreas{Ist das das richtige Kriterium oder ist das zu stark?}
\begin{align} \label{equ:variationalSolutionEquality}
\argmin_{x\in X} \frac{1}{2} \|\CorrectedOp(x) - y\|_Y^2 + \lambda \RegularisationOp(x)
\approx
\argmin_{x\in X}  \frac{1}{2} \|\TrueOp x - y\|_Y^2 + \lambda \RegularisationOp(x).
\end{align}
Note that this formulation is different to the approximation error method \eqref{eqn:varProbBAE}, where the data fidelity term is given by $\|L_{\varepsilon} (\ApproxOp x - y + \eta_{\varepsilon})\|_Y^2$.
Solutions to \eqref{eqn:varProbIntro} are then usually computed by an iterative algorithm.
%Here the solutions are given as the minimiser of each penalty functions, which are typically computed by an iterative algorithm. 
Here we consider first order methods to draw connections to learned iterative schemes \cite{Adler2017,Adler2018,hammernik2018learning}. In particular, we consider a classic gradient descent scheme, assuming differentiable $\RegularisationOp$. Then, given an 
initial guess $x_0$, we can compute a solution by the following iterative process
\begin{equation}\label{equ:proximalGrad}
x_{k+1} = x_k - \gamma_k \nabla_x\left(\frac{1}{2}\|Ax_k-y\|_X^2 +\lambda \RegularisationOp(x_k)\right),
\end{equation}
with appropriately chosen step size $\gamma_k>0$. When using \eqref{equ:proximalGrad} for the corrected operator it seems natural to ask for
a \emph{gradient consistency} \revision{of the approximate gradient}
\begin{align}\label{equ:gradConsistencyCondition}
\nabla_x \|\CorrectedOp(x) - y\|_X^2 \approx \nabla_x \|\TrueOp x - y\|_X^2
\end{align}
and hence we can identify
\begin{align}
\label{equ:GradientMisalignement}
\sum_{i=1}^N \left\| \nabla_x \left\| \CorrectedOp(x^i) - y^i\right\|_X^2 - \nabla_x \left\|\TrueOp x^i - y^i\right\|_X^2 \right\|
\end{align}
as another relevant measure of quality for model corrections within the  variational framework, if gradient schemes are used to solve \eqref{eqn:varProbIntro}. In the following we will discuss possibilities to 
obtain a correction, such that we can guarantee a closeness of solutions in the sense of \eqref{equ:variationalSolutionEquality}.

%\Andreas{Remark on other minimisation algos, such as}

\section{Forward model correction}\label{sec:forwardCorrection}

We will now present the possibility to correct the forward model only and discuss resulting shortcomings of this approach. More precisely, in a forward model 
correction, the approximate operator $\ApproxOp:X\to Y$ is corrected using a 
neural network $\ForwardCor: Y \to Y$ that is trained to remove artefacts in
data space for a given training set. This leads to a corrected operator of the form
$\CorrectedOp = F_\Theta \circ \ApproxOp$.

\subsection{The adjoint problem}
To solve the minimisation problem \eqref{eqn:varProbIntro} with the learned 
forward operator with an iterative scheme such as \eqref{equ:proximalGrad}, we need to compute the gradient of the data fidelity. We recall 
that the corrected operator $\CorrectedOp = \ForwardCor \circ \ApproxOp$ where the correction $\ForwardCor$ is given by a nonlinear neural network. Following the chain rule we obtain the following gradient 
\begin{equation}\label{eqn:fidelityTerm}
\frac{1}{2}\nabla_x \|\CorrectedOp(x) - y\|_2^2 = \ApproxOp^*  \left[D\ForwardCor(\ApproxOp x)\right]^* \left(\ForwardCor(\ApproxOp x)-y\right).
\end{equation}
Here, we denote by $D\ForwardCor(y)$ the Fr{\'{e}}chet derivative of $F_\Theta$ at $y$, which is a linear operator $Y \to Y$. Whereas the gradient for the correct data fidelity term is simply given by 
\[
\frac{1}{2}\nabla_x \|A x - y\|_Y^2 = {A}^* ({A}x-y).
\]
That means, to satisfy the gradient consistency condition \eqref{equ:gradConsistencyCondition}, we would need
\begin{equation}\label{equ:gradConsistencyApprox}
\ApproxOp^* \left[D\ForwardCor(\ApproxOp x)\right]^* \left(\ForwardCor (\ApproxOp x)-y\right) \approx A^*(Ax-y).
\end{equation}
% \CBS{Adjoint of $\partial \ForwardCor(\ApproxOp x)$? This is in general nonlinear, so how can we take an adjoint?}\Simon{This notation of $\left[\partial \ForwardCor(\ApproxOp x)\right]$ means a linearisation so adjoint concept is OK (I learned this notation from the Acta Numerica paper!)} \CBS{OK. Then it needs to be defined somewhere.}
On the other hand, if we train the forward model 
correction, only requiring consistency in data space by minimising \eqref{equ:StatisticalCorrection}, we will only ensure consistency of the
residuals $\ForwardCor(\ApproxOp x)-y\approx Ax-y$, but not full gradient 
consistency as in \eqref{equ:gradConsistencyCondition}. In order to enforce gradient consistency we need to
control the derivative of the network $D\ForwardCor(\ApproxOp x)$
%\subsubsection{Adjoint regularisation}
and consequently also need to take the adjoint into consideration when training the
forward correction. This could be done by adding an additional penalty term to \eqref{equ:StatisticalCorrection} that penalises the network for
exhibiting an adjoint different from $A^*$. For that purpose, let us examine the 
adjoint of the linearisation of the correction operator $\CorrectedOp$ around a point $x$
% , denoted by $\partial \CorrectedOp (x)$, which is then given by \CBS{In my point of view this linearization is needed already when formalising the gradient as in \eqref{eqn:fidelityTerm}.}
\begin{align*}
\left( D \CorrectedOp (x) \right)^*[y]
= \ApproxOp^* \left( D \ForwardCor(\ApproxOp x) \right)^*[y].
\end{align*}
%\paragraph{Adjoint regularisation fails:}
%In practice, we observe that this additional loss term does not manage to enforce adjoint operators that are aligned with accurate adjoint. In particular, the adjoint misfit \ref{eq:Adjoint_misfit} stagnates at around $40\%$ of the norm of the accurate adjoint.
%A possible explanation for this is given in \ref{thm:UnlearnableAdjoint}.
%To ensure that this adjoint is close to the adjoint of the accurate underlying operator
%, one can add an additional penalty term and hence enforce gradient consistency. 
With this we can consider the following additional penalty term in the training 
\begin{equation}\label{eqn:adjointPenalty}
\left\|\left(A^*-\ApproxOp^*\circ \left[D\ForwardCor(\ApproxOp x) \right]^*\right )(r)\right \|_X,\quad \textnormal{where}\ r=\ForwardCor( \ApproxOp x)-y
\end{equation}
%We note, that we basically have freedom to decide on the directions $r$ that the adjoint should be trained on. 
and choose $r$ to be the residual in data 
space $\ForwardCor (\ApproxOp x) - y$ that arises when minimising the data fidelity term as in \eqref{eqn:fidelityTerm}.
% \CBS{Actually, I do not understand this last sentence.}

However this solution comes with its own drawback. As we can see in
\eqref{eqn:fidelityTerm}, \revision{the range of the corrected fidelity term's gradient \eqref{eqn:fidelityTerm} is limited  by the range of the approximate 
	adjoint, $\range{\ApproxOp^*}$. Thus, we identify the key difficulty here in the differences of the range of the accurate and
	the approximate adjoints rather than the differences in the forward operators themselves, which links back to the discussion 
	in \ref{sec:learnedDataCor}.}

Indeed, a correction of the forward operator via composition with a parametrised model $F_\Theta$ in measurement space is not able to yield gradients close to the gradients of the accurate data term if $\range{\ApproxOp^*}$ and $\range{A^*}$ are too different. \revision{This problem is exacerbated if the dimensions of these two spaces differ and we can not expect to find a correction} that satisfies the gradient 
consistency \eqref{equ:gradConsistencyApprox} and, related to Remark \ref{remark:ApOpRangeDomain}, even suitable projections in $\ApproxOp$ would not be sufficient to compensate for this.
This observation can be made precise in the following theorem. 
%\Simon{Possibly here we want to refer back to Remark~\ref{remark:ApOpRangeDomain}, assuming that that stays in. The point would be that a projection is not enough to extend the range, at least not "sufficiently".}
%that in fact a gradient consistent forward 
%model correction is not possible, if the range of the involved operators differs 
%in the sense that we can not represent the solution of the variational problem 
%for the data fidelity. 

%Simplified speaking, 
%\CBS{COmment to the above: I somehow have the feeling that we are going in circles. I thought we convinced ourselves already before that we need gradient consistency in addition to the fit of the operators in data space.} \Andreas{Yes we do, the problem is now that if we only correct the forward model, it might not be possible to ensure.}

\begin{theorem}[Unlearnability of a gradient consistent forward model correction]
	\label{thm:UnlearnableAdjoint}
	Let $A$ and $\ApproxOp$ be compact linear operators from $X$ to $Y$ and given the solutions
	\begin{align}
	\label{eqn:varProbsTheo1} &\hat{x} \in \argmin_x \frac{1}{2}\|Ax-y\|_Y^2 \\ 
	\label{eqn:varProbsTheo2}  &\hat{x} _\Theta \textnormal{ critical point of } \frac{1}{2} \|\CorrectedOp(x)-y\|_Y^2.
	\end{align}
	If $\iter{\widetilde{x}}{0} \in  \range{\ApproxOp^*}$ and $\hat{x} \notin \overline{\range{\ApproxOp^*}}$, 
	then a gradient-descent algorithm for the functional in \eqref{eqn:varProbsTheo2}, initialised with $\iter{\widetilde{x}}{0}$, yields a solution
	%and  
	such that $\hat{x}_\Theta \neq \hat{x}$ for any $\hat{x}$ solving \eqref{eqn:varProbsTheo1}.
\end{theorem}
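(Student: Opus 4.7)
The plan is to show that every iterate of gradient descent on the corrected functional stays inside the linear subspace $\range{\ApproxOp^*}$, so that any critical point reached in the limit must lie in $\overline{\range{\ApproxOp^*}}$. Combined with the hypothesis $\hat{x} \notin \overline{\range{\ApproxOp^*}}$, this immediately forces $\hat{x}_\Theta \neq \hat{x}$.

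First I would invoke the chain rule exactly as in \eqref{eqn:fidelityTerm} to write the gradient of the corrected fidelity as
\[
\nabla_x \tfrac{1}{2}\|\CorrectedOp(x) - y\|_Y^2 = \ApproxOp^* \bigl[D\ForwardCor(\ApproxOp x)\bigr]^* \bigl(\ForwardCor(\ApproxOp x) - y\bigr),
\]
and observe that for every $x$ this expression has the form $\ApproxOp^* z$ with $z\in Y$, hence lies in $\range{\ApproxOp^*}$. A straightforward induction on the gradient descent recursion $\iter{\widetilde{x}}{k+1} = \iter{\widetilde{x}}{k} - \gamma_k \nabla_x \tfrac{1}{2}\|\CorrectedOp(\iter{\widetilde{x}}{k}) - y\|_Y^2$, together with the linearity of $\range{\ApproxOp^*}$ and the assumption $\iter{\widetilde{x}}{0} \in \range{\ApproxOp^*}$, then yields $\iter{\widetilde{x}}{k} \in \range{\ApproxOp^*}$ for every $k\geq 0$.

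Passing to the limit along the iterates gives $\hat{x}_\Theta \in \overline{\range{\ApproxOp^*}}$, and since $\hat{x} \notin \overline{\range{\ApproxOp^*}}$ by hypothesis we conclude $\hat{x}_\Theta \neq \hat{x}$. The subtlety I expect is exactly that limits can only be placed in the closure of $\range{\ApproxOp^*}$ rather than in $\range{\ApproxOp^*}$ itself; this is precisely why the hypothesis is phrased in terms of the closure, and it is also consistent with $\ApproxOp^*$ being compact so that its range need not be closed. Apart from that, the argument is purely algebraic: compactness of $A$ and $\ApproxOp$ plays no role beyond ensuring that the variational problems \eqref{eqn:varProbsTheo1}--\eqref{eqn:varProbsTheo2} admit the solutions referenced in the statement.
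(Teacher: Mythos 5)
Your proposal is correct and follows essentially the same argument as the paper: the gradient of the corrected fidelity term always has the form $\ApproxOp^* z$, so an induction keeps every iterate in $\range{\ApproxOp^*}$, any limit point lies in $\overline{\range{\ApproxOp^*}}$, and the hypothesis $\hat{x} \notin \overline{\range{\ApproxOp^*}}$ forces $\hat{x}_\Theta \neq \hat{x}$. Your remark about why the closure appears in the hypothesis is a sensible clarification that the paper leaves implicit.
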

%\CBS{Please check that my reformulation is correct.}
\begin{proof}
	This follows directly from the update equations for solving \eqref{eqn:varProbsTheo2} by
	\[
	\iter{\widetilde{x}}{k+1} = \iter{\widetilde{x}}{k} - \lambda_k \Delta \iter{\widetilde{x}}{k} 
	\]
	with 
	\begin{equation}\label{equ:approximateUpdateEqu}
	\Delta \iter{\widetilde{x}}{k} :=\frac{1}{2} \nabla_{\iter{\widetilde{x}}{k}}\|\CorrectedOp( \iter{\widetilde{x}}{k})-y\|_Y^2=\ApproxOp^* \left[D\ForwardCor(\ApproxOp\iter{\widetilde{x}}{k})\right]^* \left(\ForwardCor( \ApproxOp\iter{\widetilde{x}}{k})-y\right).
	\end{equation}
	%\CBS{Again, I am not sure I understand what $\left[D\ForwardCor(\ApproxOp\iter{\widetilde{x}}{k})\right]^*$ is for a nonlinear operator.}
	If $\iter{\widetilde{x}}{0}\in \range{\ApproxOp^*}$ then $\Delta \iter{\widetilde{x}}{0} \in  \range{\ApproxOp^*}$, and hence $\iter{\widetilde{x}}{1}\in \range{\ApproxOp^*}$. By induction this is true for all $k>0$, i.e. $\iter{\widetilde{x}}{k}\in \range{\ApproxOp^*}, ~\forall k$ and thus any limit point $\hat{x} _\Theta \in \overline{\range{\ApproxOp^*}}$ lies in the closure of the range of $\ApproxOp^*$.
	%if $\ApproxOp$ is a closed operator, which is always true in finite dimensions \Sebastian{I added the closedness comment - check!}. 
	Since $\hat{x} \notin \overline{\range{\ApproxOp^*}}$ it follows that $\hat{x} \neq \hat{x}_\Theta$ for any limit point of a gradient-descent algorithm for solving \eqref{eqn:varProbsTheo2}.
\end{proof}
%\CBS{Please also check the proof again.}

% \revision{
% \begin{theorem}
%     Let $A$ and $\ApproxOp$ be compact linear operator from $X$ to $Y$ and given the solutions
%     \begin{align}
%  &\hat{x} \in \argmin_x \frac{1}{2}\|Ax-y\|_Y^2 + R(x)\\ 
%  &\hat{x} _\Theta \textnormal{ critical point of } \frac{1}{2} \|\CorrectedOp(x)-y\|_Y^2 + R(x).
%     \end{align}    
%      If $\iter{\widetilde{x}}{0} \in  \range{\ApproxOp^*}$ and $\hat{x} \notin \overline{\range{\ApproxOp^*}}$, 
%     then a gradient-descent algorithm for the functional in \eqref{eqn:varProbsTheo2}, initialised with $\iter{\widetilde{x}}{0}$, yields a solution
%     %and  
%     such that $\hat{x}_\Theta \neq \hat{x}$ for any $\hat{x}$ solving
% \end{theorem}
% }

Thus, a correction of the forward model by requiring only consistency in data 
space does not in fact ensure consistency of the data term, when solving a 
variational problem. Additionally, according to Theorem \ref{thm:UnlearnableAdjoint} even by including an additional penalty term in the form of \eqref{eqn:adjointPenalty} does not solve this problem.
%\CBS{Maybe we can just say as a conclusion that we therefore require a gradient consistency penalty in the training?}

%\begin{remark}
%primal dual, etc? 
%\end{remark}

%\Simon{Another comment (following conversation with Yury Kurylev). If the range of $A^{\ast}$ does not include $x_{\rm true}$ then (at least part of ) $x_{\rm true}$ is in the kernel of $A$ and learning a reconstruction of that is unstable  according to Anders Hansen's analysis.}

\subsubsection{Illustration with the toy case}
Going back to the toy case from Section \ref{Sec:DownsamplingToyCase}, where we 
considered a downsampling operation. The approximate operator was chosen such 
that the null space is spanned by the unit vectors with \revision{even} index. The range of 
the adjoint can then be characterised by the identity $\range{\ApproxOp^*} = (\kernel{\ApproxOp})^\perp$ and hence we 
have $\range{\ApproxOp^*}=\{\mathbf{e}_j\ | \ 0\leq j\leq n, \ j \  \revision{\textit{odd}} \}$. It is now clear, that we cannot compute any solution $x^*\notin \range{\ApproxOp^*}$ by the updates in \eqref{equ:approximateUpdateEqu}, if we initialise them with $\iter{\widetilde{x}}{0}\in \range{\ApproxOp^*}$, since all updates are restricted to 
the range of the adjoint of the approximate operator. This problem is illustrated in  
Figure \ref{fig:toyCase_mappings}, where we consider an imaging problem for illustrative purposes and $x$ is vectorised before the operators in \eqref{equ:downsamplingOps} are applied. 
Whereas the difference in the forward operator is minimal for this example, the range of the approximate adjoint makes it impossible to recover the phantom without further adjustments after application of the adjoint, which will be addressed in the next section.
%\Simon{" a forward model correction is entirely plausible" a bit cryptic. Why not show an interpolation in $y$ as the "correction" which will still show the same effect as in lower right figure. I.e. show $\ApproxOp^* Interpolate(A x)$ }
%\Simon{add a bit more comment about this example. Indicate what AEM would do in this case}

\begin{figure}[ht!]
	\centering
	\begin{subfigure}{300pt}
		\begin{picture}(400,100)
		\put(0,0){\includegraphics[width=110pt]{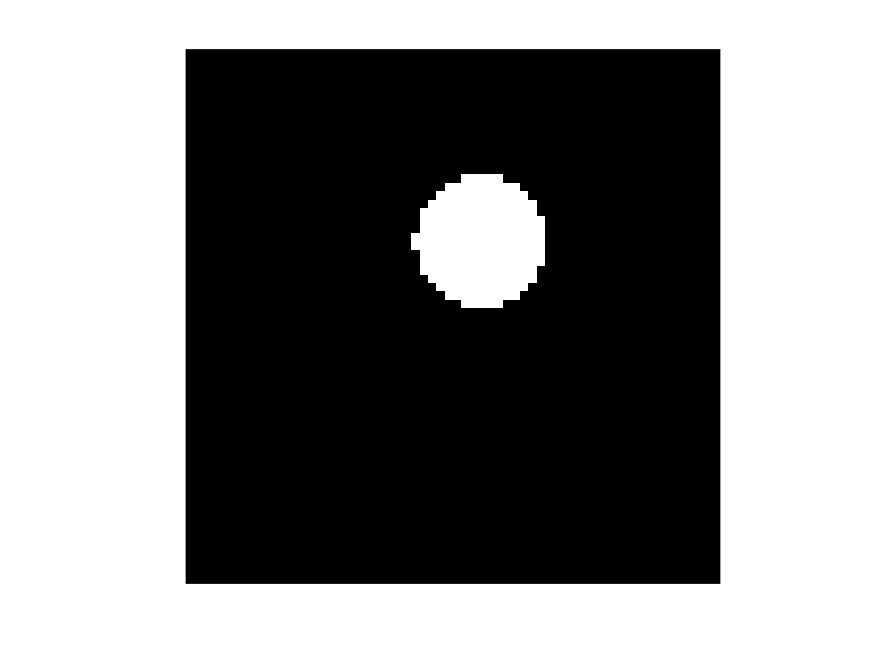}}
		\put(100,0){\includegraphics[width=110pt]{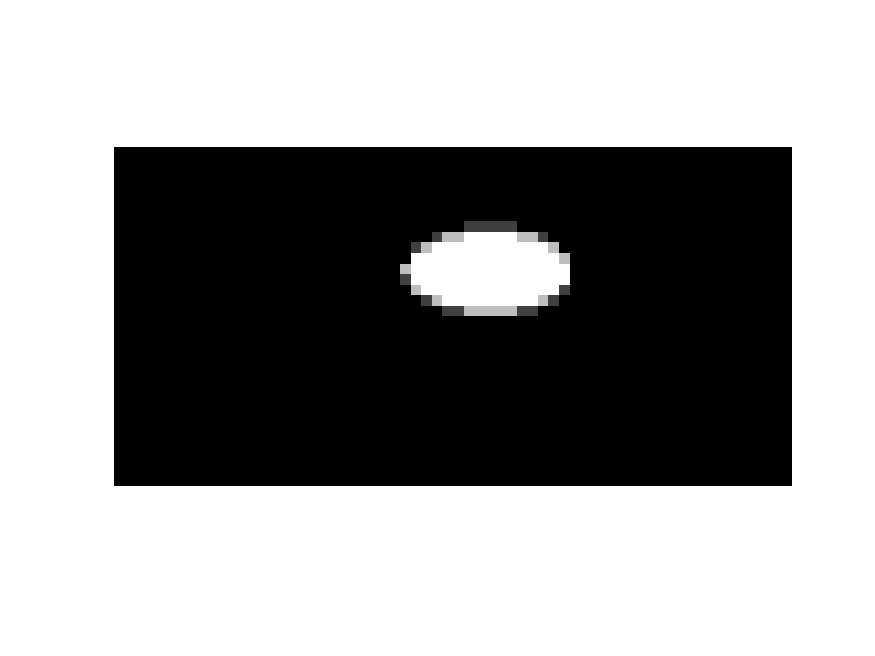}}
		\put(200,0){\includegraphics[width=110pt]{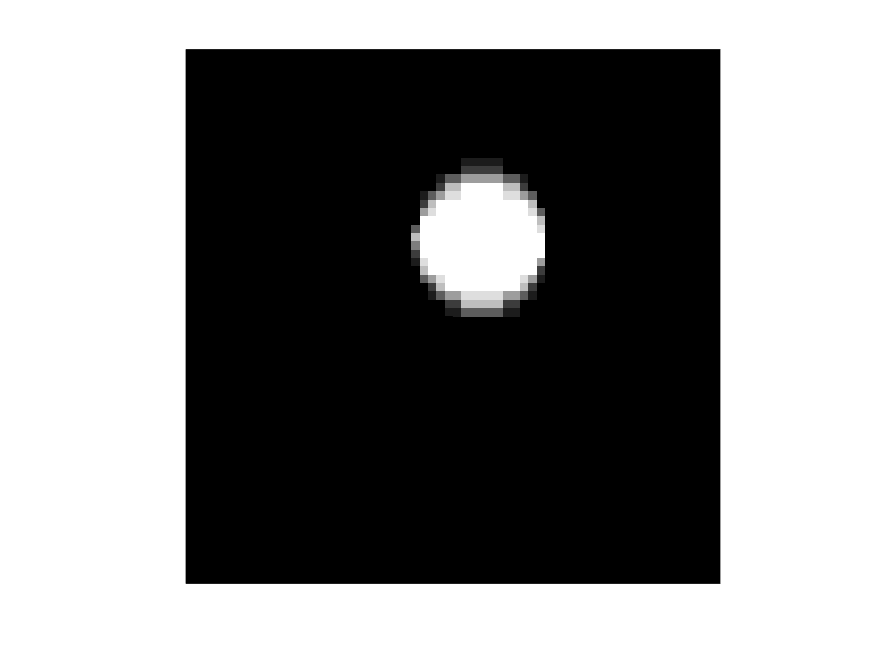}}
		\put(30,80){Phantom $x$}
		\put(130,80){Forward $Ax$}
		\put(227,80){Adjoint $A^*Ax$}
		
		\end{picture}
		\caption{\revision{Application of the accurate forward operator and its adjoint}}
	\end{subfigure}
	\begin{subfigure}{300pt}
		
		\begin{picture}(400,100)
		\put(0,0){\includegraphics[width=110pt]{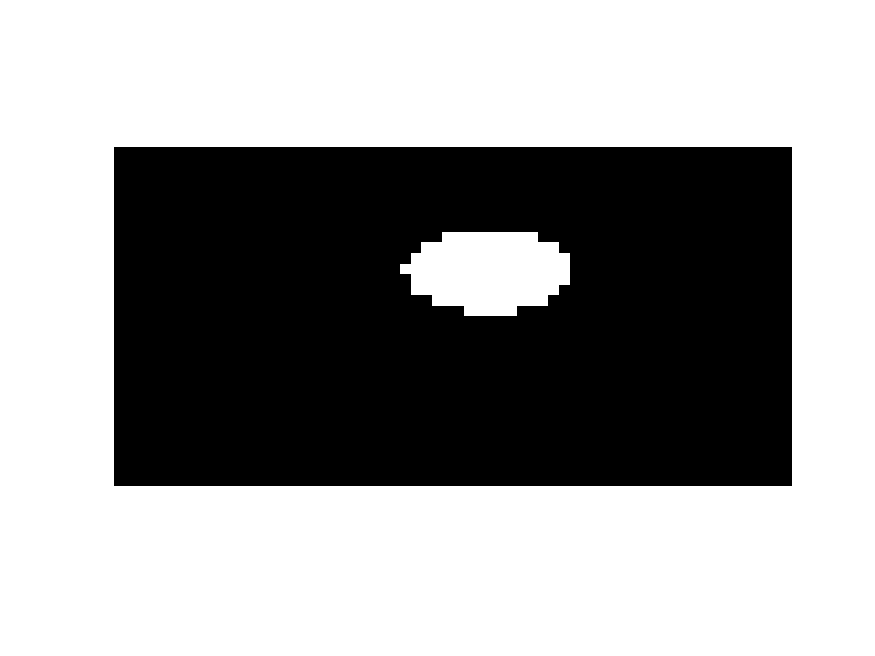}}
		\put(100,0){\includegraphics[width=110pt]{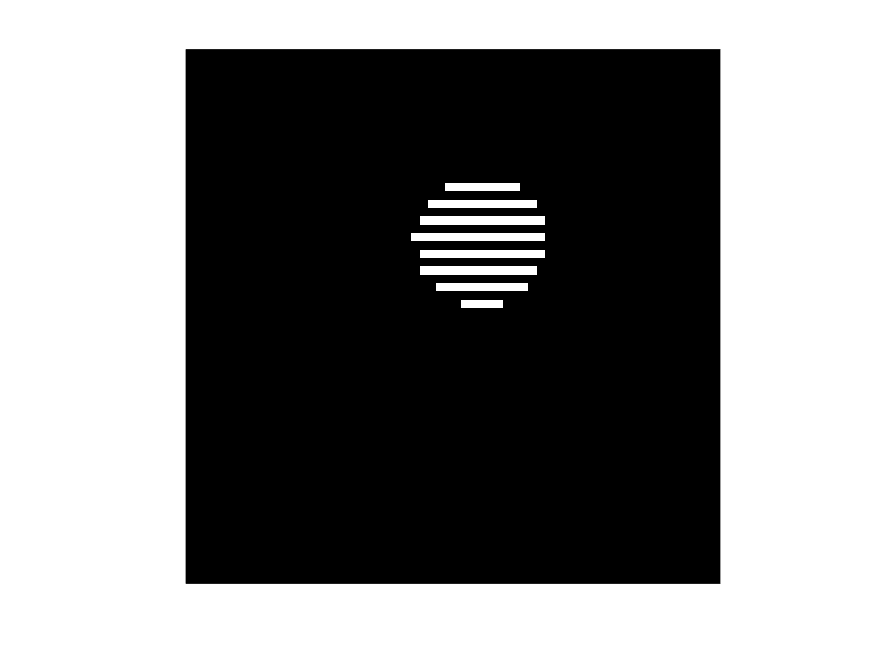}}
		\put(200,0){\includegraphics[width=110pt]{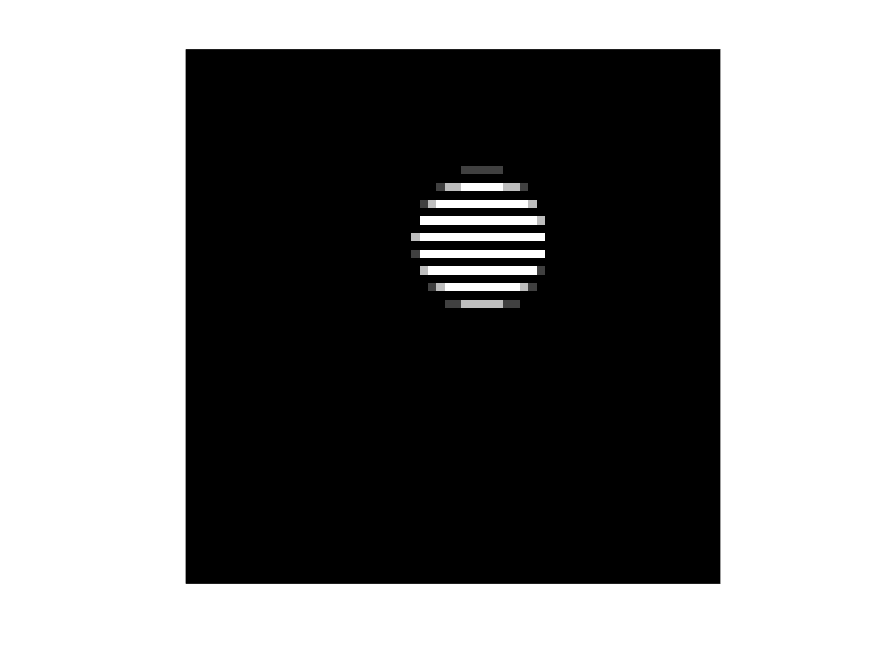}}
		\put(30,80){Forward $\ApproxOp x$}
		\put(127,80){Adjoint $\ApproxOp^*\ApproxOp x$}
		\put(227,80){Adjoint $\ApproxOp^*Ax$}
		\end{picture}
		\caption{\revision{Application of the approximate forward operator and its adjoint}}
	\end{subfigure}
	
	\caption{\label{fig:toyCase_mappings} Illustration of mapping properties for the toy case. As we can see, the range of the adjoint and approximate adjoint are essentially different. Even if the approximate adjoint $\ApproxOp^*$ is applied to the ideal data $Ax$ (bottom right), representing a perfect fit of the forward model, the range of the approximate adjoint $\range{\ApproxOp^*}$ makes it impossible to compute a consistent gradient in \eqref{equ:gradConsistencyCondition} without further modifications.}
	%$\ApproxOp^*Ax$ is "ideal" corrected forward model in the sense that the best we could get is if $\CorrectedOp x = A x$ 
\end{figure}

\section{Forward-Adjoint Correction}\label{sect:ForwardBackwardCorr}
As is evident from the last section, a forward model correction that is computed to minimise \eqref{equ:StatisticalCorrection} in data space alone is not sufficient to compute the actual reconstruction in a variational framework. We additionally require consistency in the gradients of the data fidelity term \eqref{equ:GradientMisalignement} which in turn boils down to a condition for a correction on the adjoint of the corrected forward operator in image space, motivated by \eqref{eqn:adjointPenalty}.
%Nevertheless, this problem can be overcome, if we do not only consider a  correction in the data space, but then also in image space. 
We will refer to such a correction in data and image space
as a \emph{forward-adjoint correction}, as we will learn a correction of the 
forward operator, as well as a correction of the adjoint (backward).

%Why have we ruled out the idea of a composite network of the form
%\[
%    y = A_{\Theta,\phi} = F_{\Theta} \circ \ApproxOp\circ G_{\phi}
%\]
%This suggestion has been made by Ozan for example.
%Thus we learn the range and domain of $A_{\Theta,\phi}$ simultaneously. The adjoint %operator becomes
%\[
%    A_{\Theta,\phi}^{'\ast} = G^{'\ast}_{\phi} \circ \ApproxOp^{\ast}\circ %F^{'\ast}_{\Theta}
%\]
%This at least would deal with the range of both the forward and adjoint operators.

\subsection{Obtaining a Forward-Adjoint Correction}
The goal is now to obtain a gradient consistent model correction. To achieve this 
we propose to learn two networks. That is, we learn a network $\ForwardCor$ that 
corrects the forward model and another network $\AdjointCor$ that corrects the 
adjoint, such that we have
\begin{align*}
\CorrectedOp := \ForwardCor \circ \ApproxOp, \quad \CorrectedAd := \AdjointCor \circ \ApproxOp^*
\end{align*}
% \CBS{A notational remark - sorry. 1: the notation for the parameters of the forward and backward networks are quite different, i.e. $\Theta$ and $\phi$. Maybe use $\Theta$ and $\Phi$? Also, a better notation for the corrected adjoint might be $A_\Phi^*$. The other notation suggests that $\CorrectedOp$ and $\CorrectedOp^*$ are adjoint to each other.}
% \Simon{Note : in AEM we include a projection so that $\ApproxOp = A_h P$. Thus in 
% the inverse solver gradient is assumed $P^{\ast} A_h^{\ast}\Gamma_{\eta} (y - A_h P x - \overline{\eta})$. Tanja, is that correct ?.}
% \Tanja{Yes, that's correct. However,  we don't consider discretisation in this paper and thus don't have notations $A_h$ and $P$ (anymore in this version).}
These corrections are obtained as follows. Given a set of training samples $(\trni{x}{i}, A\trni{x}{i})$, we train the forward correction $\ForwardCor$ acting in measurement space $Y$ with the loss
\begin{align}\label{equ:forwardLoss_FBC}
\min_\Theta \sum_i \| \ForwardCor(\ApproxOp \trni{x}{i}) - A \trni{x}{i} \|_Y.
\end{align}
In an analogous way, we correct the adjoint with the network $\AdjointCor$
acting on image space $X$ with the loss
\begin{align}\label{equ:adjointLoss_FBC}
\min_\phi \sum_i \| \AdjointCor(\ApproxOp^* \trni{r}{i}) -  A^* \trni{r}{i} \|_X.
\end{align}
Here, we can choose the direction $\trni{r}{i} = \ForwardCor( \ApproxOp \trni{x}{i})- \trni{y}{i} $ as 
in \eqref{eqn:adjointPenalty} for the adjoint loss.
% \CBS{do you want to say regularisation? or penalty?}\Andreas{Good point, we should use penalty and check for this in the following/previous}. 
This ensures that the adjoint correction is in fact trained in directions relevant when solving the variational problem.
% \Simon{not sure what "same way as outlined in the previous section" means. Do you mean the discussion around eq.(\ref{eqn:adjointPenalty}). Why not just use the $y_i = A x_i$ training data?}

At evaluation time, the corrected operators can then be used to compute 
approximate gradients of the data fidelity term $\|Ax-y\|_Y^2$. The
gradient then takes the form
\begin{align}
\label{eqn:GradientFit}
A^*(Ax-y) & \approx \left(\AdjointCor\circ \ApproxOp^*\right) \left( \ForwardCor(\ApproxOp x) - y \right).
\end{align}

Let us note that the separate correction of the adjoint and the forward operator comes with a change of philosophy compared to existing methods for forward operator correction as presented in Section \ref{sect:AEM}. Instead of trying to fit a single 
% \CBS{linear?}
%nonlinear 
corrected operator $\CorrectedOp$ that is already parametrised according to its use within the data fidelity term of a variational problem, we fit a nonlinear corrected operator $\CorrectedOp$ whose use within the variational problem requires to fit the \textit{gradient} of the data term directly. 
% \CBS{Please check my reformulation of the previous sentence.} 
This gradient fit takes the form as in \eqref{eqn:GradientFit}.
\revision{
	We use the gradient of the data fidelity term to directly obtain the gradient of the variational functional for our corrected operator, allowing us to perform minimisation techniques like gradient descent. We take the obtained critical point of these dynamics as the reconstruction. Note that the approximate gradient can not be associated to a variational functional for the forward-adjoint method anymore. Instead, the gradient is parametrised directly, without parametrising the variational functional first.
}
% the optimisation dynamics we would typically obtain while solving the variational problem with a gradient-based method and take the obtained critical point of these dynamics as the reconstruction.

\begin{remark}
	We note, that such a separate correction in image and data space can be related to learned primal dual (LPD) methods \cite{Adler2018}, where the correction is performed implicitly as described in Section \ref{sec:learnModelCor}. This explains in part why LPD approaches might be especially suitable for applications with an imperfectly known operator, see also \cite{vishnevskiy2019deep}.
\end{remark}

In the following section we will discuss how these dynamics relate to the 
original variational problem and we will see that they can in fact take us close
to the original reconstruction if both the forward and adjoint are fit 
sufficiently well.

\subsection{Convergence Analysis}
\label{sec:ConvergenceAnalysis}

The purpose of this section is to show that sufficiently small training losses can ensure that \revision{gradient descent over \eqref{eqn:varProbIntro}} converges to a neighbourhood of the reconstruction $\hat{x}$, obtained with the accurate operator $\TrueOp$. \revisionTwo{The section relates to the forward-adjoint correction \eqref{eqn:GradientFit} and uses the notation of this approach.}
% \Simon{Something missing ?}
In the case of forward-adjoint correction, these loss functions are given by
\begin{align}
\| \TrueOp x - \CorrectedOp(x) \|_Y \text{ and } \| \left( \TrueOp^* - \CorrectedAd \right)(\CorrectedOp(x) - y) \|_X .
\end{align}.

Let us now consider for any $y \in Y$ the two functionals 
% \Sebastian{1/2 in front of the data term?}
\begin{align*}
\label{eqn:DefinitionL}
\mathcal{L}(x) &:= \frac{1}{2} \| \TrueOp x - y \|_Y^2 + \lambda R(x),
\\
\mathcal{L}_\Theta(x) &:= \frac{1}{2} \| \CorrectedOp(x) - y \|_Y^2 + \lambda R(x)
\end{align*}
associated with the variational problem for the reconstruction $x$ from the measurement $y$. We will show connections between the reconstruction  $\hat{x} := \argmin_x \mathcal{L}(x)$ using the accurate operator $\TrueOp$ and the solutions $\hat{x}_\Theta \in \argmin_x \mathcal{L}_\Theta(x)$ obtained with our corrected operator $\CorrectedOp$.

When considering the gradient descent dynamics over $\mathcal{L}_\Theta$, we do not refer to the actual gradient over $\mathcal{L}_\Theta$ but instead consider the direct fit to the gradient of the form $\CorrectedAd(\CorrectedOp(x) - y) + \lambda \nabla R (x)$ as discussed in the last section. In a slight abuse of notation we will nevertheless denote this gradient as $\dnabla \mathcal{L}_\Theta := \CorrectedAd(\CorrectedOp(x) - y) + \lambda \nabla R (x)$ to keep the notation easy to read in the remainder of this section. If $R$ is merely sub-differentiable, then $\nabla R (x)$ denotes an element in the subgradient of $R$.

% , in the following we will write $\CorrectedOp^* := \AdjointCor \circ \ApproxOp^*$ meaning the corrected adjoint as in our forward-adjoint scheme\Simon{Already defined this at the head of section\ref{sect:ForwardBackwardCorr}. Also : need consistency $\AdjointCor \circ \ApproxOp^*$ vs. $\AdjointCor \ApproxOp^*$ i.e. include $\circ$ explicitly}, that 
% does not necessarily correspond to the adjoint of the local linearization of the  
% corrected operator $\CorrectedOp := \ForwardCor \circ \ApproxOp$. 
% In this notation, the gradient of the approximate functional is by definition $\nabla^\dagger \mathcal{L}_\Theta := \CorrectedOp^*(\CorrectedOp x - y) + \lambda \nabla R (x)$ \CBS{assuming $R$ is differentiable \ldots}.
% \Sebastian{abuse of notation for gradient symbol}

For the remainder of this chapter, we  make the following assumption on the regularisation functional $R$. 

\begin{assumption}[Strong Convexity]
	\label{ass:StrongConvexity}
	We assume that the regularisation functional $R$ is strongly convex and denote the strong convexity constant by $m$.
	% \Andreas{only problem might be, that TV is not strongly convex} \CBS{Could use Huber-TV instead? And do you need strong convexity of $R$ or strong convexity of $\mathcal L$? If the latter could also assume that $A$ is injective which does not make things better probably ;)}
\end{assumption}

\begin{remark}
	Assumption \ref{ass:StrongConvexity} in particular holds for $R$ being the Tikhonov regularisation functional $R(x) = \| x \|_X^2$ and for the pseudo-Huber loss 
	$R(x) = \int_{[0,1]^2} \delta \left[ \sqrt{ 1 + \frac{1}{\delta^2} \|\nabla_t x(t)\|^2} - 1 \right] $ for a bounded function $x: [0,1]^2 \mapsto \mathbb{R}$ and $\delta > 0$ which we use in the experimental section. \revisionTwo{For operators $A$ with bounded inverse it is sufficient for the regularisation functional to be convex to ensure strong convexity of the resulting variational functional $\mathcal{L}$. In this case, strong convexity of the regularisation functional is not required.}
\end{remark}

This allows us to use the following two fundamental lemmas on the behaviour of $\mathcal{L}$ near the minimum of the variational functional. \revision{As a direct consequence of \ref{ass:StrongConvexity} and the convexity of the data term for linear forward operators we will from now on assume $\mathcal L$ to be strongly convex.}

\begin{lemma}[Proximity to minimiser]
	\label{lem:LowEnergyCloseness}
	Let $\mathcal L$ be strongly convex. Then for every $\epsilon$ there is a $\delta >0 $ such that for any $y$ and $x$ with
	\begin{equation}
	\label{eqn:VaritionalAssumptionLoss}
	\mathcal{L}(x) - \mathcal{L}(\hat{x}) \leq \delta \implies \|x -\hat{x}\|_X \leq \epsilon,
	\end{equation}
	where $\hat{x} := \argmin_x \mathcal{L}(x)$.
\end{lemma}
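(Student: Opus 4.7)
The plan is to exploit the strong convexity of $\mathcal{L}$ directly: a strongly convex functional grows at least quadratically away from its minimiser, so small sub-optimality gap forces small distance to the minimiser.

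First, I would recall the defining inequality of $m$-strong convexity for $\mathcal{L}$, namely
\[
\mathcal{L}(x) \;\geq\; \mathcal{L}(\hat{x}) + \langle g, x - \hat{x}\rangle_X + \frac{m}{2} \| x - \hat{x} \|_X^2
\]
for every $g \in \partial \mathcal{L}(\hat{x})$. Here $m>0$ is inherited from Assumption~\ref{ass:StrongConvexity} (or more generally from the assumed strong convexity of $\mathcal{L}$ as stated in the remark preceding the lemma). Since $\hat{x}$ is a minimiser of $\mathcal{L}$, optimality gives $0 \in \partial \mathcal{L}(\hat{x})$, so we may take $g = 0$, and the cross term vanishes.

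This immediately yields the quadratic growth bound
\[
\mathcal{L}(x) - \mathcal{L}(\hat{x}) \;\geq\; \frac{m}{2}\| x - \hat{x} \|_X^2,
\]
valid for every $x \in X$ and every admissible $y$ (the constant $m$ does not depend on $y$). Given $\epsilon > 0$, I would then simply set $\delta := \tfrac{m}{2}\epsilon^{2}$; the hypothesis $\mathcal{L}(x) - \mathcal{L}(\hat{x}) \leq \delta$ then forces $\|x-\hat{x}\|_X^{2} \leq \epsilon^{2}$, which is the claimed conclusion.

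There is no serious obstacle here: the only subtlety is ensuring that $0$ lies in the subdifferential of $\mathcal{L}$ at $\hat{x}$, which is guaranteed because $\hat{x}$ is a global minimiser of the convex (in fact strongly convex) functional $\mathcal{L}$, and that we are justified in treating $\mathcal{L}$ itself as strongly convex (rather than only $R$); as noted in the remark immediately preceding the lemma, this follows either from strong convexity of $R$, or from bounded invertibility of $A$ combined with convexity of $R$. Thus the lemma reduces to the standard quadratic growth characterisation of strongly convex functionals at their minimiser, with the explicit choice $\delta = \frac{m\epsilon^{2}}{2}$.
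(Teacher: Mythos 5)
Your proof is correct and follows essentially the same route as the paper: invoke the strong-convexity inequality at the minimiser, use $0 \in \partial\mathcal{L}(\hat{x})$ to kill the cross term, and read off the quadratic growth bound. Your explicit choice $\delta = \tfrac{m\epsilon^2}{2}$ is in fact the right one (the paper's stated $\delta = \tfrac{2\epsilon}{m}$ is a slip, as it does not yield $\|x-\hat{x}\|_X \leq \epsilon$ from $\tfrac{m}{2}\|x-\hat{x}\|_X^2 \leq \delta$).
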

\begin{proof}
	By the definition of strong convexity we have
	%applied around $\hat{x}$ towards $x$,
	\begin{align*}
	\mathcal{L}(x) \geq \mathcal{L}(\hat{x}) + \langle s_{\hat{x}}, x - \hat{x} \rangle_X + \frac{m}{2} \| x - \hat{x}\|_X^2,
	\end{align*}
	where $s_{\hat{x}} \in \partial \mathcal{L}(\hat{x})$ 
	% \CBS{Before this notation was used for the linearized derivative of $F_\Theta$?} 
	is in the subdifferential of $\mathcal{L}$ at $\hat{x}$. Using $0 \in \partial \mathcal{L}(\hat{x})$ yields
	\begin{align*}
	\delta \geq  \mathcal{L}(x) - \mathcal{L}(\hat{x}) \geq \frac{m}{2} \| x - \hat{x}\|_X^2
	\end{align*}
	% \CBS{have replaced $\epsilon$ with $\delta$ above}
	which proves the claim by setting $\delta = \frac{2 \epsilon}{m}$.
\end{proof}

\begin{lemma}[Lower Gradient Norm Bound]
	\label{lem:LowerBoundGradientNorm}
	Let $\mathcal L$ be strongly convex. For every $\epsilon$ there is a $\delta >0 $ such that for any $y$ and $x$ with
	\begin{align}
	\|x -\hat{x}\|_X > \epsilon \implies \forall s \in \partial \mathcal{L}(x) : \| s \|_X > \delta,
	\label{eqn:VaritionalAssumptionGradient}
	\end{align}
	where $\partial \mathcal{L}(x)$ denotes the subdifferential of $\mathcal{L}$ at $x$ and $\hat{x} := \argmin_x \mathcal{L}(x)$.
	\label{lemma:AlignementBound}
\end{lemma}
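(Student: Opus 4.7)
The plan is to exploit strong convexity of $\mathcal{L}$ at the two points $x$ and $\hat{x}$, which will pin down any subgradient $s \in \partial \mathcal{L}(x)$ from below by a quantity proportional to $\|x - \hat{x}\|_X$. Once this linear lower bound is established, the desired $\delta$ will simply be $m\epsilon/2$.

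First, I would invoke strong convexity of $\mathcal{L}$ with modulus $m$ (Assumption \ref{ass:StrongConvexity}, as extended to $\mathcal{L}$ in the text) to write
\begin{equation*}
\mathcal{L}(\hat{x}) \geq \mathcal{L}(x) + \langle s, \hat{x} - x\rangle_X + \frac{m}{2}\|\hat{x} - x\|_X^2
\end{equation*}
for any $s \in \partial \mathcal{L}(x)$. Since $\hat{x}$ is the global minimiser, $\mathcal{L}(\hat{x}) \leq \mathcal{L}(x)$, so rearranging gives
\begin{equation*}
\langle s, x - \hat{x}\rangle_X \geq \frac{m}{2}\|x - \hat{x}\|_X^2.
\end{equation*}

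Next I would apply the Cauchy--Schwarz inequality to the left-hand side to deduce $\|s\|_X \|x - \hat{x}\|_X \geq \frac{m}{2}\|x - \hat{x}\|_X^2$. Dividing by $\|x - \hat{x}\|_X$ (which is strictly positive because $\|x - \hat{x}\|_X > \epsilon > 0$) yields the linear bound
\begin{equation*}
\|s\|_X \geq \frac{m}{2}\|x - \hat{x}\|_X > \frac{m\epsilon}{2}.
\end{equation*}
Setting $\delta := m\epsilon/2$ concludes the argument, and since the bound holds for every $s \in \partial \mathcal{L}(x)$, the implication \eqref{eqn:VaritionalAssumptionGradient} follows.

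There is essentially no obstacle here: the proof is a short, textbook consequence of strong convexity combined with Cauchy--Schwarz, entirely parallel to the proof of Lemma \ref{lem:LowEnergyCloseness}. The only minor subtlety is ensuring that the argument is valid for merely subdifferentiable (non-smooth) $R$, but the strong convexity inequality above is valid for any $s \in \partial \mathcal{L}(x)$, so the statement goes through uniformly in the choice of subgradient.
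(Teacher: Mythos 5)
Your proof is correct and follows essentially the same route as the paper's: apply the strong convexity inequality at $x$ evaluated at $\hat{x}$, use $\mathcal{L}(\hat{x}) \leq \mathcal{L}(x)$ to drop the function-value difference, and conclude $\|s\|_X \geq \frac{m}{2}\|x-\hat{x}\|_X$ via Cauchy--Schwarz. Your version additionally makes the explicit choice $\delta = m\epsilon/2$, which the paper leaves implicit.
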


\begin{proof}
	By the definition of strong convexity
	\begin{align*}
	\mathcal{L}(\hat{x}) \geq \mathcal{L}(x) + \langle s_x, \hat{x} - x \rangle_X + \frac{m}{2} \| x - \hat{x}\|_X^2,
	\end{align*}
	where again $s_x$ denotes an element in the subdifferential of $\mathcal{L}$ around $x$. Then by Cauchy-Schwarz
	\begin{align*}
	\mathcal{L}(\hat{x}) - \mathcal{L}(x) - \frac{m}{2} \| x - \hat{x}\|_X^2 \geq - \|s_x\|_X \|\hat{x} - x\|_X.
	\end{align*}
	Using $\mathcal{L}(\hat{x}) - \mathcal{L}(x) < 0$ by assumption shows
	\begin{align*}
	\frac{m}{2} \| x - \hat{x}\|_X^2 \leq \|s_x\|_X \|\hat{x} - x\|_X,
	\end{align*}
	and hence $\|s_x\|_X \geq \frac{m}{2} \| x - \hat{x}\|_X$, which proves the result.
\end{proof}

\begin{remark}
	The assumption of strong convexity is used in the following results via the lemmas \ref{lem:LowEnergyCloseness} and \ref{lem:LowerBoundGradientNorm} only. While it is a sufficient condition for these to hold, it is not necessary. In particular, if the variational functional is not strongly convex but such that \ref{lem:LowEnergyCloseness} and \ref{lem:LowerBoundGradientNorm} hold true, the following results still apply.
\end{remark}

We now turn to show that a minimiser $\hat{x}_\Theta$ of the approximate functional can in fact be computed with a gradient descent scheme and that this such a minimiser is in fact to the accurate reconstruction $\hat{x}$. We begin by extending Lemma~\ref{lemma:AlignementBound} 
to include the regularisation term. For this purpose, we consider the alignment of the variational gradients including the regularisation term
\begin{align}
\label{eqn:AligenementVariational}
\cos \Phi_v(x) := \frac{\langle \nabla \mathcal{L}(x), \dnabla  \mathcal{L}_\Theta(x) \rangle}{\| \nabla \mathcal{L}(x)\|^2}.
\end{align}
We show how the alignment can be used as key quantity to guarantee convergence of the approximate dynamics to a a neighbourhood of the accurate solution. We remark again the abuse of notation $\dnabla  \mathcal{L}_\Theta(x) := \CorrectedAd(\CorrectedOp(x) - y) + \lambda \nabla R (x)$.

% \begin{lemma}[Norm bound]
% \CBS{Let \ldots} The distance between the true and approximate gradient can be bounded for all images $x$ and measurements $y$, leading to
% \begin{align*}
%     &\| \TrueOp^* (\TrueOp x- y) - \CorrectedOp^* (\CorrectedOp x - y) \|
%     \\
%     \leq \ &\|\TrueOp\| \|(\TrueOp - \CorrectedOp)x \| + \| (\TrueOp^* - \CorrectedOp^*) (\CorrectedOp x - y) \|.
% \end{align*}
% \label{lemma:Gradient_bound}
% \end{lemma}
% \begin{proof}
% The result follow directly from the triangular inequality and the identity
% \begin{align*}
%     \TrueOp^* (\TrueOp x- y) - \CorrectedOp^* (\CorrectedOp x - y)
%     =
%     \TrueOp^*(\TrueOp - \CorrectedOp)x + (\TrueOp^* - \CorrectedOp^*) (\CorrectedOp x - y)
% \end{align*}
% \end{proof}

% \Sebastian{Check for norms. Check chain rule/inner product in Hilbert space}

\begin{proposition}[Convergence under alignment constraints]
	\label{prop:convergenceAlignement}
	Assume that outside a neighbourhood $U$ of the minimiser $\hat{x}$ of the exact functional $\mathcal{L}$ we have
	\begin{align*}
	\cos\Phi(x) > \delta_1 > 0,
	\end{align*}
	for some $\delta_1 >0$. Then eventually the gradient descent dynamics over $\mathcal{L}_\Theta$ will reach the neighbourhood $U$.
\end{proposition}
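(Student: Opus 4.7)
The plan is to show that, while the iterates remain outside $U$, the exact functional $\mathcal{L}$ is guaranteed to decrease by a uniformly positive amount per step, so that the boundedness from below of $\mathcal{L}$ forces entry into $U$ after finitely many iterations. Concretely, for the update $x_{k+1} = x_k - \gamma_k \dnabla \mathcal{L}_\Theta(x_k)$ I would invoke a standard descent inequality: assuming Lipschitz smoothness of $\nabla \mathcal{L}$ with constant $L$ on the region traversed by the iterates (which follows from boundedness of $\TrueOp$ and smoothness of $R$, as in the two examples given after Assumption~\ref{ass:StrongConvexity}), we have
\begin{equation*}
\mathcal{L}(x_{k+1}) \leq \mathcal{L}(x_k) - \gamma_k \langle \nabla \mathcal{L}(x_k), \dnabla \mathcal{L}_\Theta(x_k) \rangle + \frac{L \gamma_k^2}{2} \| \dnabla \mathcal{L}_\Theta(x_k) \|_X^2 .
\end{equation*}

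The next step combines the alignment hypothesis with Lemma~\ref{lem:LowerBoundGradientNorm}. By the definition in \eqref{eqn:AligenementVariational} and $\cos\Phi_v(x_k) > \delta_1$, the inner product above is at least $\delta_1 \|\nabla\mathcal{L}(x_k)\|_X^2$; and by Lemma~\ref{lem:LowerBoundGradientNorm} applied to the strongly convex $\mathcal{L}$, there exists $\delta_2 > 0$ such that $\|\nabla\mathcal{L}(x_k)\|_X > \delta_2$ for all $x_k \notin U$. Together these produce a linear-in-$\gamma_k$ descent of order $\gamma_k\, \delta_1 \delta_2^2$ in the true functional, offset by the quadratic-in-$\gamma_k$ error term. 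The main obstacle is controlling this quadratic remainder, since the alignment condition pins down only the directional behaviour of $\dnabla \mathcal{L}_\Theta$, not its magnitude. I would resolve this either by imposing a uniform bound $\|\dnabla \mathcal{L}_\Theta(x_k)\|_X \leq M$ on the trajectory, which is natural when the corrected operators and iterates remain bounded, and then fixing $\gamma_k = \gamma$ small enough that the first-order term dominates; or equivalently by using a backtracking line search over $\mathcal{L}_\Theta$ to enforce an Armijo-type descent.

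Under either choice, there is a constant $c > 0$ such that $\mathcal{L}(x_{k+1}) \leq \mathcal{L}(x_k) - c$ whenever $x_k \notin U$. Since $\mathcal{L}$ is bounded below by $\mathcal{L}(\hat{x})$, at most $\lceil (\mathcal{L}(x_0) - \mathcal{L}(\hat{x}))/c \rceil$ consecutive iterations can remain outside $U$, so some iterate must lie in $U$, which proves the claim. A last remark: the proposition asserts only reachability, not stability, of $U$; once inside, iterates could in principle leave again, since the alignment hypothesis provides no information for $x \in U$. Extending the argument to a stability statement around $\hat{x}$ would require additional control on $\dnabla \mathcal{L}_\Theta$ near the minimiser and is deliberately outside the scope here.
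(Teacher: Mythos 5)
Your argument is correct and rests on the same two pillars as the paper's proof: the alignment hypothesis converts the inner product $\langle \nabla\mathcal{L}(x_k), \dnabla\mathcal{L}_\Theta(x_k)\rangle$ into a positive multiple of $\|\nabla\mathcal{L}(x_k)\|_X^2$, Lemma~\ref{lem:LowerBoundGradientNorm} bounds that norm away from zero outside $U$, and the resulting uniform decrease of the \emph{exact} functional $\mathcal{L}$, which is bounded below, forces entry into $U$; Lemma~\ref{lem:LowEnergyCloseness} then relates sublevel sets to neighbourhoods. The only genuine difference is the time discretisation. The paper works with the continuous gradient flow $\partial_t x_\Theta(t) = -\dnabla\mathcal{L}_\Theta(x_\Theta(t))$, where the chain rule gives $\partial_t \mathcal{L}(x_\Theta(t)) = -\langle\nabla\mathcal{L},\dnabla\mathcal{L}_\Theta\rangle \leq -\delta_1\delta_2^2 < 0$ with no second-order remainder, so no Lipschitz smoothness of $\nabla\mathcal{L}$, no magnitude bound on $\dnabla\mathcal{L}_\Theta$, and no step-size condition are needed. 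Your discrete version instead addresses the algorithm as actually implemented (fixed-step gradient descent), at the price of exactly the extra hypotheses you flag: $L$-smoothness on the traversed region and a uniform bound $M$ on $\|\dnabla\mathcal{L}_\Theta\|_X$, with $\gamma$ chosen so that the linear term dominates. Of your two remedies for the quadratic remainder, the fixed small step with the bound $M$ is the sound one; the Armijo alternative is delicate here because $\dnabla\mathcal{L}_\Theta$ is, as the paper stresses, \emph{not} the gradient of any functional (in particular not of $\mathcal{L}_\Theta$), so $-\dnabla\mathcal{L}_\Theta$ need not be a descent direction for $\mathcal{L}_\Theta$ and a backtracking condition posed on $\mathcal{L}_\Theta$ may be unsatisfiable, while decrease of $\mathcal{L}_\Theta$ would in any case not transfer to the decrease of $\mathcal{L}$ that the argument requires. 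Your closing observation that the proposition asserts reachability rather than invariance of $U$ applies equally to the paper's continuous-time proof.
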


\begin{proof}
	Denote by $x_\Theta(t)$ the trajectory of the reconstruction under the gradient flow
	\begin{align*}
	\partial_t x_\Theta(t) = - \dnabla \mathcal{L}_\Theta(x_\Theta(t)).
	\end{align*}
	Consider the evaluation of the variational loss $\mathcal{L}$ that invokes the correct forward operator $\TrueOp$. Using the bound of the alignment as in Lemma~\ref{lemma:AlignementBoundVariational}, we can bound
	\begin{align*}
	\partial_t \mathcal{L}(x_\Theta(t)) &=   \langle \nabla \mathcal{L}(x_\Theta(t)), \partial_t x_\Theta(t) \rangle_X = - \langle  \dnabla \mathcal{L}(x_\Theta(t)) , \nabla \mathcal{L}_\Theta (x_\Theta(t)) \rangle_X
	\\
	&\leq - \delta_1 \cdot \|\nabla \mathcal{L}(x)\|_X^2 .
	\end{align*}
	As long as $\Theta(t)$ has not reached the neighbourhood $U$, by \eqref{eqn:VaritionalAssumptionGradient}, we have $\|\nabla \mathcal{L}(x)\|_X > \delta_2$ for some $\delta_2$ and hence
	\begin{align*}
	\partial_t \mathcal{L}(x_\Theta(t)) \leq - \delta_1 \cdot \|\nabla \mathcal{L}(x)\|_X^2 \leq - \frac{1}{2} \delta_1 \delta_2 =: -c <0.
	\end{align*}
	The gradient flow dynamics induced by $\dnabla \mathcal{L}_\Theta$ hence induce a decrease of $\mathcal{L}$ at a rate that is globally bounded by $c$ outside neighbourhood $U$ around $\hat{x}$, concluding the proof by lemma \ref{lem:LowEnergyCloseness}.
\end{proof}

We have shown that even though the corrected operator $\CorrectedOp$ is potentially nonlinear, the gradient dynamics induced by $\dnabla \mathcal{L}_\Theta$ can in fact minimise the variational problem with the accurate operator $\TrueOp$, effectively minimising the associated variational functional $\mathcal{L}$ and leading us close to the accurate solution $\hat{x}$. \revision{The proposition is based on an assumption about the alignment $\cos \Theta$. We will directly track this quantity in our experimental section, making sure the convergence results can be applied to our experimental findings. The training loss, however, is not based on the alignment directly, but rather minimises a combination of forward and adjoint loss. We have in fact found that this combination of loss functionals is both more interpretable and more stable than directly minimising alignment. The following lemma and theorem show that these loss functions in fact minimise a lower bound on the alignment and hence a sufficiently well-trained correction can also be guaranteed to yield results close to the minimiser $\hat{x}$ of the variational functional involving the exact operator $\TrueOp$. In this context, a well-trained correction is such that it achieves sufficiently low training errors.}

\begin{lemma}[Complete gradient alignment bound]
	\label{lemma:AlignementBoundVariational}
	Let $\mathcal{L}$ and $\mathcal{L}_\Theta$ be defined as above. We have the lower bound
	\begin{align*}
	\cos\Phi_v \geq 1 - \frac{\|\TrueOp\|_{X \to Y} \|(\TrueOp - \CorrectedOp)(x) \|_Y + \| (\TrueOp^* - \CorrectedAd) (\CorrectedOp(x) - y) \|_X}{\| \nabla \mathcal{L}(x)\|_X},
	\end{align*}
	where $\cos\Phi_v$ is defined as in \eqref{eqn:AligenementVariational}.
\end{lemma}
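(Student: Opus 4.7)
The plan is to rewrite $\cos \Phi_v$ in a form that exposes a single ``error term'' $\dnabla \mathcal L_\Theta(x) - \nabla \mathcal L(x)$, then estimate that error term by splitting it into the forward error and the adjoint error separately.

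Concretely, writing $a := \nabla \mathcal L(x)$ and $b := \dnabla \mathcal L_\Theta(x)$, the first step is the identity
\begin{equation*}
\cos \Phi_v(x) \;=\; \frac{\langle a, b\rangle_X}{\|a\|_X^2} \;=\; 1 + \frac{\langle a, b - a\rangle_X}{\|a\|_X^2}.
\end{equation*}
By Cauchy--Schwarz, $\langle a, b-a\rangle_X \ge -\|a\|_X \|b-a\|_X$, so
\begin{equation*}
\cos \Phi_v(x) \;\ge\; 1 - \frac{\|b-a\|_X}{\|a\|_X} \;=\; 1 - \frac{\|\dnabla \mathcal L_\Theta(x) - \nabla \mathcal L(x)\|_X}{\|\nabla \mathcal L(x)\|_X}.
\end{equation*}

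The second step is to estimate the numerator. The regulariser contribution $\lambda \nabla R(x)$ cancels between $\nabla \mathcal L$ and $\dnabla \mathcal L_\Theta$, leaving
\begin{equation*}
\dnabla \mathcal L_\Theta(x) - \nabla \mathcal L(x) \;=\; \CorrectedAd(\CorrectedOp(x) - y) - \TrueOp^*(\TrueOp x - y).
\end{equation*}
I would split this by adding and subtracting $\TrueOp^*(\CorrectedOp(x) - y)$:
\begin{equation*}
= \bigl(\CorrectedAd - \TrueOp^*\bigr)(\CorrectedOp(x) - y) \;+\; \TrueOp^*\bigl(\CorrectedOp(x) - \TrueOp x\bigr).
\end{equation*}
Applying the triangle inequality together with the operator-norm bound $\|\TrueOp^* u\|_X \le \|\TrueOp\|_{X\to Y}\|u\|_Y$ yields
\begin{equation*}
\|\dnabla \mathcal L_\Theta(x) - \nabla \mathcal L(x)\|_X \;\le\; \|\TrueOp\|_{X\to Y}\,\|(\TrueOp - \CorrectedOp)(x)\|_Y + \|(\TrueOp^* - \CorrectedAd)(\CorrectedOp(x) - y)\|_X,
\end{equation*}
which when substituted into the bound on $\cos \Phi_v(x)$ gives exactly the claimed inequality.

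There is no genuine obstacle here; the only thing to be careful about is the choice of splitting in the telescoping step. Splitting via $\TrueOp^*(\CorrectedOp(x) - y)$ (rather than, say, $\CorrectedAd(\TrueOp x - y)$) is what produces the two training-loss quantities in their intended form: the forward discrepancy $\|(\TrueOp - \CorrectedOp)(x)\|_Y$ multiplied by $\|\TrueOp\|_{X\to Y}$, and the adjoint discrepancy evaluated at the residual $\CorrectedOp(x) - y$ that is actually used in \eqref{equ:adjointLoss_FBC}. This is precisely the pairing that ties the bound to the two training losses used to fit $\ForwardCor$ and $\AdjointCor$, which is the point of the lemma for the subsequent convergence argument.
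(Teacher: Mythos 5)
Your proof is correct and follows essentially the same route as the paper's: the same Cauchy--Schwarz reduction to $1 - \|\dnabla \mathcal{L}_\Theta(x) - \nabla \mathcal{L}(x)\|_X / \|\nabla \mathcal{L}(x)\|_X$, followed by the same telescoping through $\TrueOp^*(\CorrectedOp(x)-y)$ and the triangle inequality. No differences worth noting.
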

\begin{proof}
	A straightforward calculation shows
	\begin{align*}
	&\frac{\langle \nabla \mathcal{L}(x), \dnabla  \mathcal{L}_\Theta(x) \rangle_X}{\| \nabla \mathcal{L}(x)\|_X^2}
	\\
	= \ &\frac{\langle \nabla \mathcal{L}(x), \nabla  \mathcal{L}(x) \rangle_X}{\| \nabla \mathcal{L}(x)\|_X^2} + \frac{\langle \dnabla \mathcal{L}_\Theta(x) - \nabla \mathcal{L}(x), \nabla  \mathcal{L}(x) \rangle_X}{\| \nabla \mathcal{L}(x)\|_X^2}
	\\
	\geq \ &1 - \frac{\|\dnabla \mathcal{L}_\Theta(x) - \nabla \mathcal{L}(x)\|_X}{\| \nabla \mathcal{L}(x)\|_X}
	\end{align*}
	The result follows by using the bound
	\begin{align*}
	&\| \TrueOp^* (\TrueOp x- y) - \CorrectedAd (\CorrectedOp( x) - y) \|_X
	\\
	\leq \ &\|\TrueOp\|_{X \to Y} \|(\TrueOp - \CorrectedOp)(x) \|_Y + \| (\TrueOp^* - \CorrectedAd) (\CorrectedOp(x) - y) \|_X,
	\end{align*}
	which itself emerges directly from the triangular inequality applied to the identity
	\begin{align*}
	\TrueOp^* (\TrueOp x- y) - \CorrectedAd (\CorrectedOp(x) - y)
	=
	\TrueOp^*(\TrueOp - \CorrectedOp)(x) + (\TrueOp^* - \CorrectedAd) (\CorrectedOp(x) - y).
	\end{align*}
\end{proof}

\begin{theorem}[Convergence to a neighbourhood of $\hat{x}$]\label{theo:conv2Neighbourhood}
	\label{thm:GradientConvergence}
	Let $\epsilon>0$ and pick $\delta$ as in \eqref{eqn:VaritionalAssumptionGradient}. Assume both adjoint and forward operator are fit up to a $\delta/4$-margin, i.e.
	\begin{align}\label{eqn:conditionTheo410}
	\|\TrueOp\|_{X \to Y} \| (\TrueOp - \CorrectedOp)(x_n) \|_Y < \delta/4, \quad \|(\TrueOp^* - \CorrectedAd) (\CorrectedOp(x_n) - y) \|_X < \delta/4
	\end{align}
	for all $y$ and $x_n$ obtained during gradient descent over $\mathcal{L}_\Theta$. 
	% \CBS{It would be good to comment here -- maybe within a remark after the formulation of the theorem -- how big of an assumption this is. Maybe link this already to the recursive training idea in the next section.} 
	Then eventually the gradient descent dynamics over $\mathcal{L}_\Theta$ will reach an $\epsilon$ neighbourhood of the accurate solution $\hat{x}$.
\end{theorem}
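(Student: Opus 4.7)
The plan is to combine the three ingredients already in place: the lower bound on $\|\nabla \mathcal{L}\|_X$ outside a neighbourhood of $\hat x$ from Lemma \ref{lem:LowerBoundGradientNorm}, the gradient-alignment bound of Lemma \ref{lemma:AlignementBoundVariational}, and the descent-under-alignment conclusion of Proposition \ref{prop:convergenceAlignement}. The idea is that the two training-loss bounds in \eqref{eqn:conditionTheo410} together control the error between the true gradient $\nabla \mathcal L$ and the used gradient $\dnabla \mathcal L_\Theta$, and that this error is small compared to $\|\nabla\mathcal L\|_X$ whenever the current iterate is far from $\hat x$. This ratio being small is exactly what Lemma \ref{lemma:AlignementBoundVariational} needs to certify a positive alignment $\cos \Phi_v$, and a uniformly positive alignment outside a neighbourhood of $\hat x$ is what Proposition \ref{prop:convergenceAlignement} needs to conclude convergence into that neighbourhood.

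First I would fix $\epsilon > 0$ and invoke Lemma \ref{lem:LowerBoundGradientNorm} to obtain $\delta > 0$ such that $\|x - \hat x\|_X > \epsilon$ implies $\|\nabla \mathcal L(x)\|_X > \delta$. Next I would plug the triangle inequality from the proof of Lemma \ref{lemma:AlignementBoundVariational} together with hypothesis \eqref{eqn:conditionTheo410} to get, for every iterate $x_n$ outside the $\epsilon$-neighbourhood,
\begin{equation*}
\|\dnabla \mathcal L_\Theta(x_n) - \nabla \mathcal L(x_n)\|_X
\leq \|\TrueOp\|_{X \to Y}\,\|(\TrueOp - \CorrectedOp)(x_n)\|_Y + \|(\TrueOp^* - \CorrectedAd)(\CorrectedOp(x_n)-y)\|_X < \tfrac{\delta}{2}.
\end{equation*}
Substituting this into the bound of Lemma \ref{lemma:AlignementBoundVariational} and using the lower bound $\|\nabla \mathcal L(x_n)\|_X > \delta$ yields $\cos\Phi_v(x_n) \geq 1 - (\delta/2)/\delta = 1/2$, a uniformly positive alignment outside the $\epsilon$-neighbourhood $U$ of $\hat x$.

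With $\delta_1 := 1/2$ established on $X \setminus U$, Proposition \ref{prop:convergenceAlignement} applies directly: the descent along $\dnabla \mathcal L_\Theta$ produces an integrable decrease of $\mathcal L$ at a rate bounded away from zero outside $U$, so the trajectory must eventually enter $U$. A small bookkeeping detail is that Proposition \ref{prop:convergenceAlignement} is phrased for the gradient flow, but the discrete scheme inherits the same conclusion provided the step sizes are sufficiently small, since the per-step decrease of $\mathcal L$ differs from the continuous one by a term quadratic in the step size which can be dominated by the linear descent term $-c$.

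The main obstacle I expect is a subtle issue of consistency between the hypothesis and what actually occurs along the trajectory: the assumption \eqref{eqn:conditionTheo410} is stated \emph{for all} iterates $x_n$ produced by the descent, so one must be careful that the argument does not need the bound to hold at points that the dynamics never visit. Since \eqref{eqn:conditionTheo410} is quantified over every $x_n$ produced, no such circularity arises, but this is the point where one must resist the temptation to replace the hypothesis by a weaker pointwise bound at $\hat x$ or at the initial iterate. Once that is noted, the remaining steps are routine applications of the preceding lemmas.
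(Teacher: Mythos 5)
Your proposal is correct and follows essentially the same route as the paper's own proof: invoke Lemma \ref{lem:LowerBoundGradientNorm} for the lower bound $\|\nabla\mathcal{L}(x)\|_X > \delta$ outside the $\epsilon$-neighbourhood, combine the two $\delta/4$-bounds via Lemma \ref{lemma:AlignementBoundVariational} to get $\cos\Phi_v \geq 1/2$, and conclude with Proposition \ref{prop:convergenceAlignement}. Your additional remark on passing from the gradient flow of the Proposition to the discrete descent scheme is a point the paper leaves implicit, but it does not change the argument.
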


\begin{proof}
	We apply \ref{prop:convergenceAlignement}, with the neighbourhood $U$ chosen as the $\epsilon$ ball around $\hat{x}$. Using  Lemma~\ref{lemma:AlignementBoundVariational}, we can bound
	\begin{align*}
	\cos\Phi \geq 1 - \frac{\|\TrueOp\|_{X \to Y} \|(\TrueOp - \CorrectedOp)(x) \|_Y + \| (\TrueOp^* - \CorrectedAd) (\CorrectedOp(x) - y) \|_X}{\| \nabla \mathcal{L}(x)\|_X} \geq 1- \frac{\delta/4 + \delta/4}{\| \nabla \mathcal{L}(x)\|_X}
	\end{align*}
	As long as $\|x_\Theta(t) - \hat{x} \|_X \geq \epsilon$, by \eqref{eqn:VaritionalAssumptionGradient}, we have $\|\nabla \mathcal{L}(x)\|_X > \delta$ and hence
	\begin{align*}
	\cos\Phi \geq 1 - \frac{\delta/2}{\delta} >0
	\end{align*}
	We can hence apply \ref{prop:convergenceAlignement} to conclude the proof.
	% Denote by $x_\Theta(t)$ the trajectory of the reconstruction under the gradient flow
	% \begin{align*}
	%     \partial_t x_\Theta(t) = - \dnabla \mathcal{L}_\Theta(x_\Theta(t)).
	% \end{align*}
	% Consider the evaluation of the variational loss $\mathcal{L}$ that invokes the correct forward operator $\TrueOp$. Using the bound of the alignment as in Lemma~\ref{lemma:AlignementBoundVariational}, we can bound
	% \begin{align*}
	%     \partial_t \mathcal{L}(x_\Theta(t)) &=   \langle \nabla \mathcal{L}(x_\Theta(t)), \partial_t x_\Theta(t) \rangle_X = - \langle  \dnabla \mathcal{L}(x_\Theta(t)) , \nabla \mathcal{L}_\Theta (x_\Theta(t)) \rangle_X
	%     \\
	%     &\leq - \left[ 1 - \frac{\|\TrueOp\|_{X \to Y} \|(\TrueOp - \CorrectedOp)(x) \|_Y + \| (\TrueOp^* - \CorrectedAd) (\CorrectedOp(x) - y) \|_X}{\| \nabla \mathcal{L}(x)\|_X} \right] \cdot \|\nabla \mathcal{L}(x)\|_X^2 .
	% \end{align*}
	% As long as $\|x_\Theta(t) - \hat{x} \|_X \geq \epsilon$, by \eqref{eqn:VaritionalAssumptionGradient}, we have $\|\nabla \mathcal{L}(x)\|_X > \delta$ and hence
	% \begin{align*}
	%     \partial_t \mathcal{L}(x_\Theta(t)) \leq - \left[ 1 - \frac{\delta/4 + \delta/4}{\delta} \right] \cdot \|\nabla \mathcal{L}(x)\|_X^2 \leq - \frac{1}{2} \delta^2 =: -c <0.
	% \end{align*}
	% The gradient flow dynamics induced by $\dnabla \mathcal{L}_\Theta$ hence induce a decrease of $\mathcal{L}$ at a rate that is globally bounded by $c$ outside the $\epsilon$ ball around $\hat{x}$, concluding the proof by lemma \ref{lem:LowEnergyCloseness}.
\end{proof}

Overall, we have thus shown that a sufficiently well-trained non-linear corrected operator $\CorrectedOp$ induces gradient dynamics $\dnabla \mathcal{L}_\Theta$ that lead close to the accurate solution $\hat{x}$.

%\begin{remark}
We note that the main assumption in Theorem \ref{theo:conv2Neighbourhood} is \revision{that the} learned operator $\CorrectedOp$ has to be sufficiently close to the accurate operator $\TrueOp$ \textit{throughout} the minimisation trajectory, in the sense of \eqref{eqn:conditionTheo410}.
%$\|\TrueOp\|_{X \to Y} \| (\TrueOp - \CorrectedOp)(x_n) \|_Y$ and $\|(\TrueOp^* - \CorrectedAd) (\CorrectedOp x_n - y) \|_X$ being small. 
While this corresponds directly to the quantities of the loss functions that the approximations $\CorrectedOp$ and $\CorrectedAd$ were trained on, it includes any $x_n$ occurring during the gradient descent dynamics. Thus, we will discuss the concept of adding exactly these samples $x_n$ to the training set in the next chapter, effectively making our training loss function minimise exactly the relevant quantities $ \| (\TrueOp - \CorrectedOp)(x_n) \|_Y$ and $\|(\TrueOp^* - \CorrectedAd) (\CorrectedOp(x_n) - y) \|_X$.
%\Sebastian{I added this remark. Please check wording}
%\end{remark}

\begin{remark}
	\revision{
		The above Theorem \ref{thm:GradientConvergence} makes use of both proximity of the forward operator as well as of the adjoints. While this is necessary to guarantee convergence of the gradient descent dynamics to a neighbourhood of the accurate solution, it is not strictly necessary to guarantee proximity of the minimisers of $\mathcal{L}_\Theta$ and of $\mathcal{L}$. In fact, in Appendix \ref{sec:appxTheo} we show that under certain assumptions a good forward approximation quality is sufficient to ensure closeness of minimisers, without considering a specific optimisation scheme. 
		While this result is interesting from a theoretical viewpoint, Theorem \ref{thm:GradientConvergence} is essential for supporting and explaining the experimental results in this study.
	}
	% \Andreas{Made some minor adjustments}
\end{remark}

% \Sebastian{Incorporate Simon's comment, comment on why this error becomes small: Well non linear operators don't have an adjoint we can't use the notation $\left( \ForwardCor \ApproxOp \right)^*$. But we could define a meaning like 
% \begin{align*}
%     \left<u, \ForwardCor \ApproxOp v\right>_Y \neq \left<v, \AdjointCor \ApproxOp^* u\right>_X,\quad u,v \in \textnormal{"appropriate direction sets"}.
% \end{align*}
% would that work? In fact we could test it using random draws from the sets used for (\ref{equ:forwardLoss_FBC},\ref{equ:adjointLoss_FBC}). Could we even define a loss function $\left\langle u, \ForwardCor \ApproxOp v\right \rangle_Y - \left\langle v, \AdjointCor \ApproxOp^* u\right\rangle_X$ ?
% }
% \Andreas{Reference convergence to minimum}

%\section{Application to photoacoustic tomography}

%%%%%%%%%%%%%%%%%%%%%%%%%%%%%%%%%%%%%%%%%%%%%%%%%%%%%%
%%%%%%%%%% Computational considerations %%%%%%%%%%%%%%
%%%%%%%%%%%%%%%%%%%%%%%%%%%%%%%%%%%%%%%%%%%%%%%%%%%%%%

\section{Computational considerations}
\label{sec:ComputationalConsiderations}
In the following we will first address some details on the training procedures and then continue to present the design of experiments to evaluate performance of the discussed approaches. In particular, as we mentioned above, in order to ensure the convergence in Theorem \ref{theo:conv2Neighbourhood}, we need to make sure that the forward fit as well as the backward fit in \eqref{eqn:conditionTheo410} are satisfied throughout the minimisation process, which makes a special recursive training of the corrections necessary.

\subsection{Recursive training}
\label{sec:RecursiveTraining}
Let us now address how to ideally choose the training sets for the forward-adjoint correction to ensure a good fit of the forward correction $\ForwardCor$ by minimising \eqref{equ:forwardLoss_FBC} and the adjoint correction $\AdjointCor$ with \eqref{equ:adjointLoss_FBC}. To create the training set, there are two possibilities. Either we are given a set of measurements $\{\trni{y}{i}, i=1,\dots,N\}$, or alternatively, if we are given a set of samples in image space 
$\{\trni{x}{i}, i=1,\dots,N\}$, then we need to create a corresponding set of measurements by applying the accurate model $\trni{y}{i} = A\trni{x}{i} + \trni{e}{i}$ with the addition of noise $\trni{e}{i}$. 
Either way, given the set of measurements $\trni{y}{i}$ we need to train $\ForwardCor$ and $\AdjointCor$ on a meaningful starting point for the gradient descent to solve the variational problem; a natural candidate would be to choose the backprojection $\ittrni{x}{i}{0} = \ApproxOp^* \trni{y}{i}$. 

%\Tanja{Could we say "samples" instead of "reconstructions". At least for me, it'd make more sense to train using simulated samples than reconstructions.}

Training the corrected operators $\CorrectedOp$ and $\CorrectedAd$ with the samples $\{(\ittrni{x}{i}{0}, \TrueOp \ittrni{x}{i}{0})\}$ only yields
operator corrections that approximate $\TrueOp$ and $\TrueOp^*$ well for samples $x$ that 
are close to backprojections of measurements.
% \Tanja{This sentence was a bit hard to read. Maybe it can be reformulated?}
% \Andreas{If we use $x$ instead if $x_0$ this also means that the adjoint fit does not work for the initial steps of gradient descent?}
However, the purpose of this 
paper is to learn a correction of $\ApproxOp$ that can be used \textit{within the variational problem} to obtain a solution close to the one 
obtained using the accurate operator $\TrueOp$. We observe that training $\CorrectedOp$ on the backprojections $\ittrni{x}{i}{0} = \ApproxOp^* \trni{y}{i}$ only is not sufficient to achieve this goal. While this leads to $\CorrectedOp$ being a good approximation to $A$ for the first iterates in the gradient descent scheme, 
the approximation quality tends to deteriorate for later iterates, making $\CorrectedOp$ not a good appproximation to $\TrueOp$ anymore. Such a behaviour is in fact what one would heuristically 
expect, as $\CorrectedOp$ has never been trained on later iterates to match the 
accurate operator. 

This connects to the assumptions made in the convergence Theorem \ref{thm:GradientConvergence}, where we assume low approximation error for both the forward and the adjoint at \textit{all} iterates of the gradient descent scheme. We hence need to ensure a uniformly low approximation error at any iterate to be able to guarantee convergence and it is in particular not sufficient to ensure low approximation error at the initial point of the minimisation of the variational problem only.

% \CBS{One could very nicely link this discussion back to the theorem in the previous section!}

A natural solution to mitigate this problem is to include later iterates of the 
variational problem into the training samples for the corrected operator. More 
precisely, given some weights $\Theta$ of the correction operator, denote by $\{\ittrni{x}{i}{n}\}$ the iterates obtained following the dynamics
\begin{align}
\label{eqn:DataFid}
\ittrni{x}{i}{n+1} = \ittrni{x}{i}{n} - \mu \left[ \CorrectedAd(\CorrectedOp(\ittrni{x}{i}{n}) - \ittrni{y}{i}{n}) + \lambda \nabla R(x) \right],
\end{align}
where $\mu$ denotes the step size. We add these samples to the original training set $\{(\trni{x}{i}, A\trni{x}{i})\}$, i.e. we also train on $\{(\ittrni{x}{i}{n}, A\ittrni{x}{i}{n})\}$ for all $n<N_{\mathrm{iter}}$ and $i$. Here $N_{\mathrm{iter}}$ is the maximal number of gradient descent steps we take. This allows us to ensure that the corrections $\CorrectedOp$, as well as $\CorrectedAd$ for the forward-adjoint method, are fit consistently well at any iterate $\ittrni{x}{i}{n}$ of the gradient descent dynamics.

A major drawback of this approach is the additional computational burden it incurs during training. Obtaining the iterates of the minimisation to solve the variational problem requires performing the minimisation at training time. \revisionTwo{To reduce the additional computational burden one can make use of the fact that the gradient of the data term for the learned operator correction $\CorrectedOp$ has to be computed for two different purposes. Firstly, it is used to perform minimisation over the variational functional and secondly to further train the $\CorrectedOp$ to better match the accurate operator.} One can hence perform this computation only once, using it for both purposes. This reduces computational costs particularly when training on every iterate of the minimisation over the variational functional, in which case little overhead costs compared to regular training is inflicted.
% \Andreas{<--- Is that what you mean? This might need some rephrasing} 
% has to be computed to train the approximation on a given sample - it can hence be detached at training, allowing to perform a gradient descent step on a given sample with little additional cost compared to only training the approximation on that sample.

Additionally, the trajectory \eqref{eqn:DataFid} depends on the network weights $\Theta$. The training samples can hence change during training and convergence is not clear \emph{a-priori}. Empirically, we find that training on the full trajectory $(\ittrni{x}{i}{n}, A\ittrni{x}{i}{n})$ for $n<N_{\mathrm{iter}}$ from the beginning tends to be unstable, as this will lead to most training samples differing greatly from both the original training distribution as well as the accurate trajectory we are finally interested in. There are however two effective solutions to this problem: First, one could alternatively train on the trajectory obtained when using the accurate operator $A$, avoiding instabilities in the beginning of training. This, however, could lead to errors accumulating during training. We found that the most effective solution is to have $N_{\mathrm{iter}}$ increase from $1$ to some $N_{\rm max}$ during training. With this approach, we start off by training on the original samples $\ittrni{x}{i}{0}$ only and then add in more samples from the trajectory as training proceeds. \revision{We have noticed that once trained on backprojections, adding later iterates to the training set does not change the behaviour of the learned correction on backprojections by much. In this sense, one can interpret the latter approach to recursive training as gradually extending the domain the correction is valid on, without considerably changing the behaviour of the correction on the part of the image domain that it is already valid on. This heuristically explains why recursive training can be performed very stably when gradually increasing $N_{\mathrm{iter}}$.}

\begin{figure}[t!]
	\centering
	\begin{picture}(350,130)
	\put(0,5){\includegraphics[width = 350 pt]{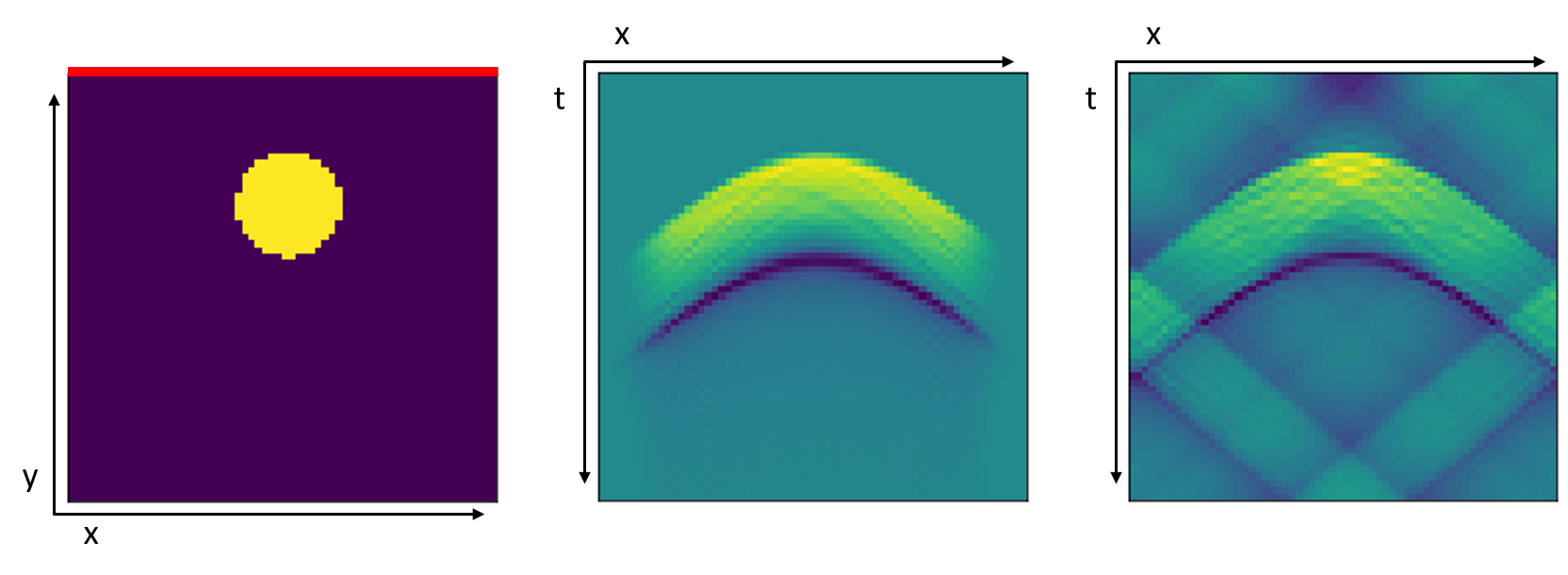}}
	\put(35,0){Phantom: $x$}
	\put(140,0){Accurate data: $Ax$}
	\put(248,0){Approximate data: $\ApproxOp x$}
	\end{picture}
	\caption{\label{fig:dataPATillustration} Illustration of the \revision{limited view} imaging scenario under consideration. Left: numerical phantom with a line detector (red line). Middle: ideal data from the accurate forward model. Right: data obtained with an approximate model with clearly visible aliasing artefacts.}
\end{figure}

\subsection{Experiment design}
For a practical application we consider photoacoustic tomography (PAT) in two dimensions; for more details on PAT see \cite{Beard2011} and the discussion in Appendix \ref{sec:appxPAT}. 
Here, the measurement data is given as a set of time-series  \revision{in a limited view geometry measured with a line detector at the surface},
%and can be represented as an image, as illustrated 
which we visualise as a space-time image in Figure \ref{fig:dataPATillustration}. \revision{In this limited view scenario, the reconstruction task is already a very challenging inverse problem in itself even with the accurate operator available, we refer to \cite{kuchment2008mathematics,Xu2004} for details.}
Here, the accurate model $A$ is given by a pseudo-spectral time-stepping model \cite{treeby2010,Treeby2012}, whereas the approximate model $\ApproxOp$ is given by a regriding and Fast Fourier Transform which neglects the effect of singularities and introduces systematic errors in the forward mapping \cite{Cox2005,Koestli2001}.
In particular, to avoid singularities in the approximate model we threshold incident waves with an angle up to $\theta_{\mathrm{max}}=60^\circ$ from normal incidence\revision{, which means that this part of the data is inevitably lost.
	Nevertheless, the approximate forward model still exhibits strong aliasing artefacts, as} can be clearly seen in Figure \ref{fig:dataPATillustration} 
%and hence 
indicating that this application is an ideal candidate for this study. 
For more details on the models, we refer to the discussion in Appendix \ref{sec:appxPAT}. \revision{We developed the majority of code in Python using the TensorFlow package and using the k-Wave MATLAB (R2018b, The MathWorks Inc., Natick, MA) toolbox \cite{treeby2010} for some calculations concerning the accurate operator. We used a single Quadro P6000 to conduct the experiments.
	% \Tanja{I'd suggest modifying "MATLAB with k-wave" to "k-Wave MATLAB (R2011a, The MathWorks Inc., Natick, MA) toolbox \cite{treeby2010}”. Remeber to change the Matlab version. In addition, TensorFlow would benefit from similar information. 
	% I really can't comment English but I'd prefer using form "developed" instead of "have developed". The "have done" form sounds ok as well, and I guess it feels more natural for a German speaker.}
}

%\Andreas{Check angle used in simulations}
%We note, that this approximate model has been previously used in a learned gradient scheme, where the model correction was performed implicitly, hence in the following we shall consider the explicit case
%\Sebastian{@Andreas: Do we need to cite some more previous work on PAT?? \cite{Antholzer2019deep} }

\paragraph{Model corrections under consideration}
We evaluate the forward only method with a gradient penalty term as described in Section \ref{sec:forwardCorrection} as well as the forward-adjoint approach as outlined in Section \ref{sect:ForwardBackwardCorr} \footnote{Code is available at \url{https://github.com/lunz-s/ModelCorrection}}. For both of these methods, we conduct experiments with a model trained on back-projected measurements only and with a model that has been trained using recursive training \revision{(Section \ref{sec:RecursiveTraining})}. As a baseline method, we compare to the widely used AEM approach as outlined in Section \ref{sect:AEM}, a linear approach to model correction. We finally compare to reconstructions obtained with the uncorrected operator as well as to the reconstruction the accurate operator yields. This allows to assess how well various correction approaches are able to correct the shortcomings of the uncorrected operator.

% \Tanja{Would this secion benefit of a brief recap on the different reconstructions that we will study?}

\paragraph{Measurement setup}
We consider a limited view problem in this study, where measurements are only taken \revision{on top of the target} with a line detector, as indicated in Figure \ref{fig:dataPATillustration}. \revision{In particular, we consider an image size of $64\times 64$, the measurements are taken with a line detector of the same width as the target and $t=64$ time points, resulting in a measurement space of same size, i.e. $64\times 64$. The detector is modelled as a Fabry-P{\'e}rot  sensor \cite{zhang2008backward} with wide bandwidth and no directivity.} 
Since both image and data space can be represented as a two-dimensional image, it is reasonable to use the same network architecture for both spaces.

%We evaluate the various operator approximation methods discussed on the problem of photoacoustic tomography (PAT) reconstruction. As discussed in the previous section, the physics of PAT can be modelled well using the wave equation \eqref{eqn:PATfwd}. However, reconstruction via this equation requires solving the wave equation using tools like finite elements, which are computationally expensive and thus often unfeasible, especially when employing iterative reconstruction schemes based on a variational problem that require evaluating the forward operator and its adjoint multiple times. In the following, we will consider this operator the accurate underlying operator $\TrueOp$. The approximation via Fourier space \eqref{eqn:FastFwd} is much faster to evaluate, but introduces artefacts compared to using the wave equation. It is hence a natural candidate that we consider as approximate forward operate $\ApproxOp$ in our empirical study. \Sebastian{Limited View Problem. Geometry of the PAT.}

\paragraph{Training samples}

\begin{figure}
	\centering
	% \begin{subfigure}{.24\textwidth}
	% \centering
	%     \includegraphics[scale=.4]{images/vessels/toy_images/im1.png}
	% \end{subfigure}
	% \begin{subfigure}{.24\textwidth}
	% \centering
	%     \includegraphics[scale=.4]{images/vessels/toy_images/im2.png}
	% \end{subfigure}
	% \begin{subfigure}{.24\textwidth}
	% \centering
	%     \includegraphics[scale=.4]{images/vessels/toy_images/im3.png}
	% \end{subfigure}
	% \begin{subfigure}{.24\textwidth}
	% \centering
	%     \includegraphics[scale=.4]{images/vessels/toy_images/im4.png}
	% \end{subfigure}
	\includegraphics[width=\textwidth]{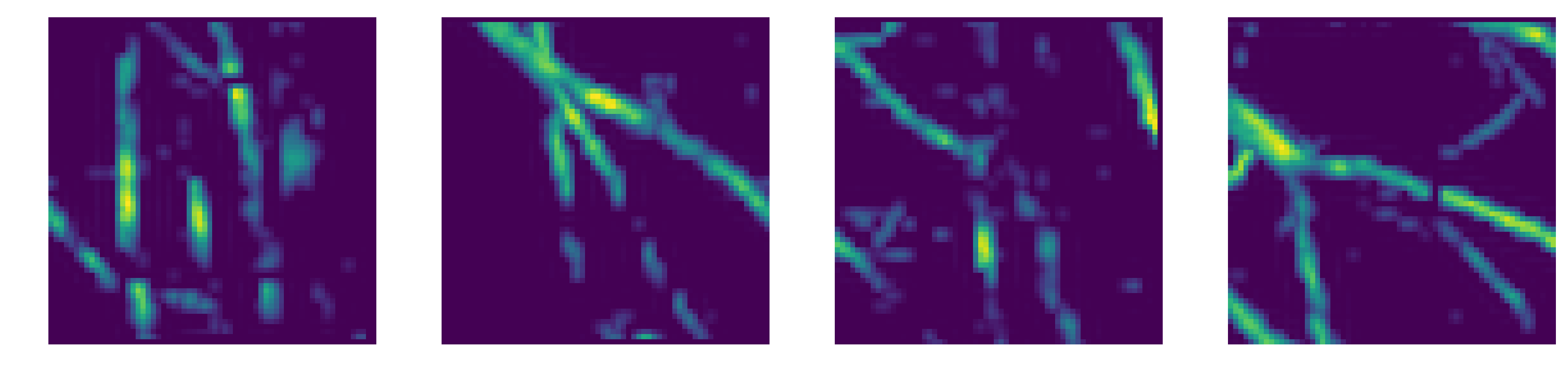}
	\caption{Examples from the vessel set used for training of the model correction. The phantoms were obtained from segmented CT scans to provide a realistic ground-truth image for photoacoustic imaging of vessel structures.}
	\label{fig:Samples_vessels}
\end{figure}

% \Tanja{I think this section requires clarification of language. What is 'data'? I'd say that Fig. \ref{fig:dataPATillustration} illustrates data but Fig. \ref{fig:Samples_vessels} does not (those are "samples from training set" or "samples from parameter space" of "examples of photoacoustic images"... whatever is the best wording). The same problem is also in the text below. A photoacoustic image is not our data. Photoacoustic time-series is data. In addition, we should be careful in using either "measurements" or "(simulated) data".} 
For the evaluation of the various model correction methods, we utilise two different sets of samples. Firstly, a simple synthetic set of 'ball' images, consisting of circles of varying intensity in $[0.75,1]$, with fixed radius, but random location on an empty, zero intensity background. We employ a total of $4096$ ball samples for fitting the correction and an additional $64$ for evaluation. An example of a \revision{`ball'} image and the corresponding data are illustrated in Figure \ref{fig:dataPATillustration}.
Secondly, a realistic vessel set that has been obtained by segmenting vessels from 3D CT scans to provide realistic phantoms, see \cite{Hauptmann2018} for details. For this study, the 3D volumes have been projected to two dimensions by a maximum intensity 
projection and subsequently cropped to the intended target size; we note that all samples are normalised between $[0,1]$. Examples of the obtained vessel phantoms are displayed 
in Figure \ref{fig:Samples_vessels}. We use $2760$ unique vessel phantoms for training, augmented by a rotation by $90^\circ$ for a training set of $5520$ samples in total. We evaluate on a separate test set containing $64$ samples. All phantoms had a resolution of $64^2$ and resolution in data space is the same for both, correct and approximate model. The phantoms are used to generate synthetic measurements $y^i := \TrueOp x^i + e^i$ by applying the accurate operator $\TrueOp$ and adding Gaussian white noise at $1\%$ of the maximum value in measurement space.
% \Tanja{We should tell $1\%$ of what}
% \Tanja{We should add the size of our training set.}

\paragraph{Training Scheme}

For every measurement $\trni{y}{i}$, we compute $\ittrni{x}{i}{0} := 4 \cdot \ApproxOp^* y$ as an initial reconstruction. 
We choose to rescale the adjoint $\ApproxOp^* y$ by a factor of $4$ as in our measurement setup we typically have $\|\TrueOp x\|_Y \approx \frac{1}{2} \|x\|_X$ and $\|\TrueOp^*y\|_X \approx \frac{1}{2} \|y\|_Y$. This is due to the fact that we measure along a line on one side of the object only, hence recording only half the energy emitted on the measurement device. This \revision{ensures that the average intensity of the backprojection roughly matches the one of both the ground truth and the minimiser of the variational functional. It }allows to keep the norm of the reconstruction approximately stable throughout solving the variational problem \eqref{eqn:VariationalProblem} and hence makes operator approximations more robust throughout the trajectory of minimising \eqref{eqn:VariationalProblem}.

Given a set of training samples $\trni{y}{i}$, we then train the forward approximation with the loss term
\begin{align}
\label{eqn:ForwardLoss}
\sum_i \underbrace{\left\|\ForwardCor (\ApproxOp \ittrni{x}{i}{0}) - \TrueOp \ittrni{x}{i}{0} \right\|_Y}_{\text{Forward Loss}} + \underbrace{\left\|\left(\TrueOp^*- \ApproxOp^* \left[D \ForwardCor(\ApproxOp x_o^i) \right]^*  \right) \left( \ForwardCor(\ApproxOp \ittrni{x}{i}{0})-\trni{y}{i} \right)\right \|_X}_{\text{Adjoint Loss}},
\end{align}
weighting the forward and adjoint loss equally.
% In the case of additional adjoint regularisation, we add a loss term of the form
% \begin{align*}
%     \sum_i \left\|\left[\TrueOp^*-\left(\ApproxOp^*\circ \left[\partial \ForwardCor(\ApproxOp x_o^i) \right]^* \right) \right] \left[ (\ForwardCor \ApproxOp \ittrni{x}{i}{0})-\trni{y}{i} \right]\right \|.
% \end{align*}
% \Simon{I'm not clear if this was actually done, given the arguments around Theorem\ref{thm:UnlearnableAdjoint}. If it was done, which results are using this ? - you are right, we do not present the results for this case. I will delete the paragraph and only introduce the forward-adjoint model.}
In the case of a forward-adjoint correction, the forward approximation is trained using the loss
\begin{align}
\label{eqn:ForwardLossFB}
\sum_i \left\|\ForwardCor (\ApproxOp \ittrni{x}{i}{0}) - \TrueOp \ittrni{x}{i}{0} \right\|_Y,
\end{align}
while the adjoint is trained with the loss
\begin{align}
\label{eqn:AdjointLossFB}
\sum_i \left\| \left( \AdjointCor \circ \ApproxOp^* -  A^* \right) \left(\ForwardCor (\ApproxOp \ittrni{x}{i}{0})-\trni{y}{i} \right) \right\|_X.
\end{align}

Note that the quasi-adjoint of the approximate operator $\CorrectedAd :=  \AdjointCor \circ \ApproxOp^*$ as well as the adjoint of the forward approximation in \eqref{eqn:ForwardLoss} is evaluated in direction $r := \ForwardCor( \ApproxOp \ittrni{x}{i}{0})-\trni{y}{i}$. This loss is chosen to be consistent with the terms arising during a gradient-descent based optimisation of \eqref{eqn:VariationalProblem}, as shown in the previous chapters.

If recursive training is applied, we additionally compute the iterates of a gradient-descent scheme on the penalty functional
\begin{align}
\label{eqn:VariationalProblem}
\argmin_x \| \CorrectedOp(x) - y^i \| + \lambda R(x).
\end{align}
All losses are summed over the later iterates $\ittrni{x}{i}{n}$ with $n \geq 0$, instead of taking the initial point $\ittrni{x}{i}{0}$ only. To make recursive training stable, the number of recursive steps considered during training is gradually increased to the maximal value, instead of beginning by training on the full trajectory from the start as outlined in Section \ref{sec:RecursiveTraining}.

% \Simon{ Actually, as I said at some point, we have kind of developed a whole new framework where the definition of adjoint is not over arbitrary elements of the range and domain but those in training set(s). Not sure if we want to remark on this in detail.} \Sebastian{We should discuss on this remark!}

\paragraph{Network Details}
The networks $\ForwardCor$ and $\AdjointCor$ are built with a U-Net \cite{Ronneberger2015} architecture, that has been particularly popular in the image reconstruction community including applications to PAT  \cite{Antholzer2019deep,davoudi2019deep,guan2019fully} and other modalities \cite{hamilton2018deep,hauptmann2019real,Jin2017}. We follow the standard architecture with $4$ downsampling and the same amount of upsampling blocks, each containing two convolutional layers with filters of size $5\times 5$. We employed average pooling for downsampling and transpose convolutions for upsampling layers. We note, that the proposed framework is agnostic to the employed architecture, we expect similar results with other sufficiently expressive network architectures.

\paragraph{Solving the variational problem}
We employ gradient descent with a fixed step size \revision{of 0.2 for all experiments} to solve the variational problem \eqref{eqn:VariationalProblem}, which we have seen can lead to a near-optimal reconstruction given sufficient approximation quality in Section \ref{sec:ConvergenceAnalysis}. We additionally add a positivity constraint $x_n \geq 0$ everywhere to the minimisation that we incorporate using projected gradient descent. This means we cut the negative part of every iterate to $0$ everywhere, as negative values are non-physical. %, which can be considered is standard practise for solving PAT reconstruction.
% \Sebastian{Citation? Andreas: I think this should be sufficient}. 

As regularisation functional $R$ we choose the pseudo-Huber varation functional
\begin{align}
R(x) := \sum_{i,j} \delta \left[ \sqrt{ 1 + \frac{1}{\delta^2} [(x[i+1, j] - x[i, j])^2 + (x[i, j+1] - x[i, j])^2]} - 1 \right] 
\end{align}
% \Tanja{the subscripts of x in regularisation could be changed to differ from previous notations. For example: $x(i+1,j)$ or $x(h,\ell)$?}
to reconstruct $x \in \mathbb{R}^{64 \times 64}$. Here $x[i,j]$ denotes the pixel of $x$ at location $i$ along the vertical and $j$ along the horizontal axis. This functional approximates the $L^2$-norm of the gradient of the reconstruction for small values and the $L^1$-norm of for large values of the gradient, coinciding with total variation (TV) in the limit $\delta \to 0$. The parameter $\delta$ specifies the characeristic length at which the behaviour of the regularisation functional changes from approximating $L^2$ to $L^1$. We chose $\delta = 0.01$ for all experiments. \revision{We remark that this functional is strongly convex on all bounded domains for all $\delta>0$, with the strong convexity constant depending on $\delta$ and the diameter of the imaging domain. The latter is in our case specified by the constraint $x[i,j] \in [0,1]$.}

The regularisation parameter $\lambda$ is tuned for every experiment and baseline individually via a grid search over a logarithmically evenly spaced grid with grid points being a factor of $log(10)$ apart. The best parameter was chosen in terms of $L^2$ distance to the ground truth image.

\section{Computational results}\label{sec:computationalResults}

% \Tanja{To do before submission: Position of figures need to be changed such that they do not appear before they are referred.}

\paragraph{Synthetic ball phantoms}

To evaluate the proposed approaches we solve the variational problem employing the various approaches for model correction for a set of samples generated from a test set that is different from the samples used for fitting the correction. We use the same Huber regularisation functional and regularisation parameter as discussed in the last paragraph.

First, we investigate the correction accuracy in terms of the alignment of the gradient of the data fidelity term with the accurate gradient $\TrueOp^*(\TrueOp x_n - y)$ throughout the minimisation of the variational functional in Figure \ref{fig:Angles_balls}. As a notion of alignment we consider 
\begin{align}
\label{eqn:AlignmentExperimental}
\cos \Phi_v(x) = \frac{\left\langle \TrueOp^* \Big(\TrueOp x_n - y \Big), \left(\AdjointCor\circ \ApproxOp^*\right) \left( \ForwardCor(\ApproxOp x) - y \right) \right\rangle_X}{\left\| \TrueOp^*\Big(\TrueOp x_n - y\Big)\right\|_X \left\|\Big(\AdjointCor\circ \ApproxOp^*\Big) \left( \ForwardCor(\ApproxOp x) - y \right)\right\|_X},
\end{align}
in the case of the forward-adjoint method. For the forward only and AEM methods, the expression $\big(\AdjointCor\circ \ApproxOp^*\big) \big( \ForwardCor(\ApproxOp x) - y \big)$ is replaced by the appropriate gradient of the corrected data fidelity term. Equation \eqref{eqn:AlignmentExperimental} is a slight deviation from \eqref{eqn:AligenementVariational} used in the theory section. This is to ensure good comparability with the baseline AEM and better \revision{interpretability by rescaling the alignment with the norm of the approximate gradient. This also makes different choices of regularisation parameters more comparable. In the theory section we instead rescale with the norm of the accurate gradient only, making the proofs more straight forward.}

We note that all correction methods apart from the AEM approach start at a high alignment of $>0.8$ at the first iterate. However, only the forward-adjoint based methods are able to achieve an alignment of $>0.95$ at the first iterate. Forward only approaches that rely on fitting a correction in measurement space only are limited by the range of the adjoint $\ApproxOp^*$ as discussed in Section \ref{sect:ForwardBackwardCorr}. 
% \Tanja{Notice that figure label in Figs 4, 5, etc says "tradient" and not "gradient".}

However, the  alignment starts decreasing rapidly over the minimisation of the variational problem, dropping below $0$ for the forward-adjoint method before the $200$th iterate. The recursive versions of the forward and forward-adjoint methods, as discussed in Section \ref{sec:RecursiveTraining}, are able to mitigate some of this shortcoming. While the alignment between accurate gradient and the correction also declines throughout the minimisation of the variational problem when employing recursive training, the decline is significantly less steep and occurs at a later stage of the minimisation. We also note that the alignment never drops under $0.2$ for recursively trained corrections. 
%We refer to the recursive versions of the forward and forward-adjoint method to using the abbreviation 'Rec.' in figure \ref{fig:Angles_balls}. <--- That can go to the caption
\begin{figure}
	\centering
	\begin{subfigure}{.49\textwidth}
		\includegraphics[scale=0.45]{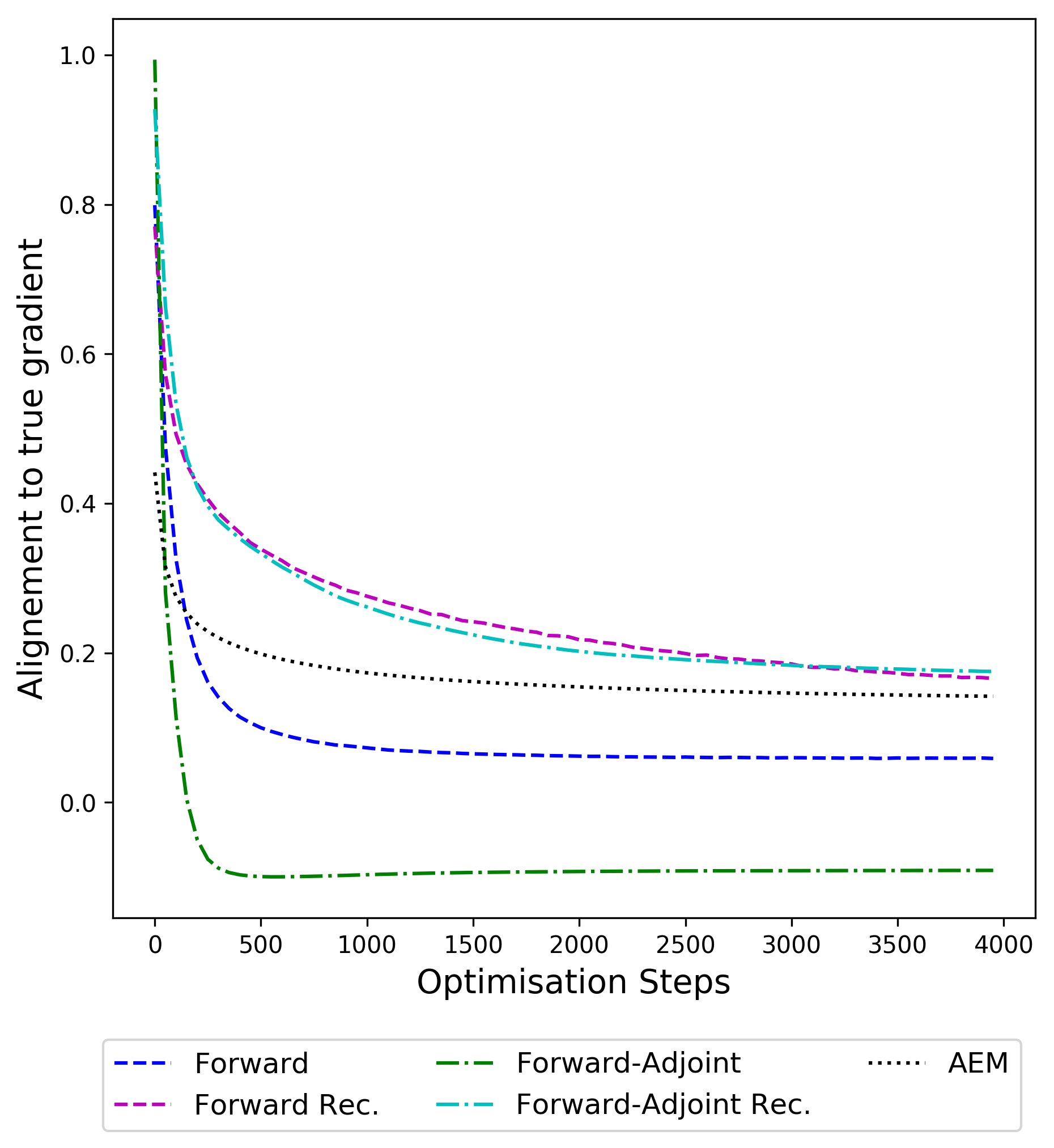}
		\caption{Full trajectory}
		\label{fig:Angles_balls_A}
	\end{subfigure}
	\begin{subfigure}{.49\textwidth}
		\includegraphics[scale=0.45]{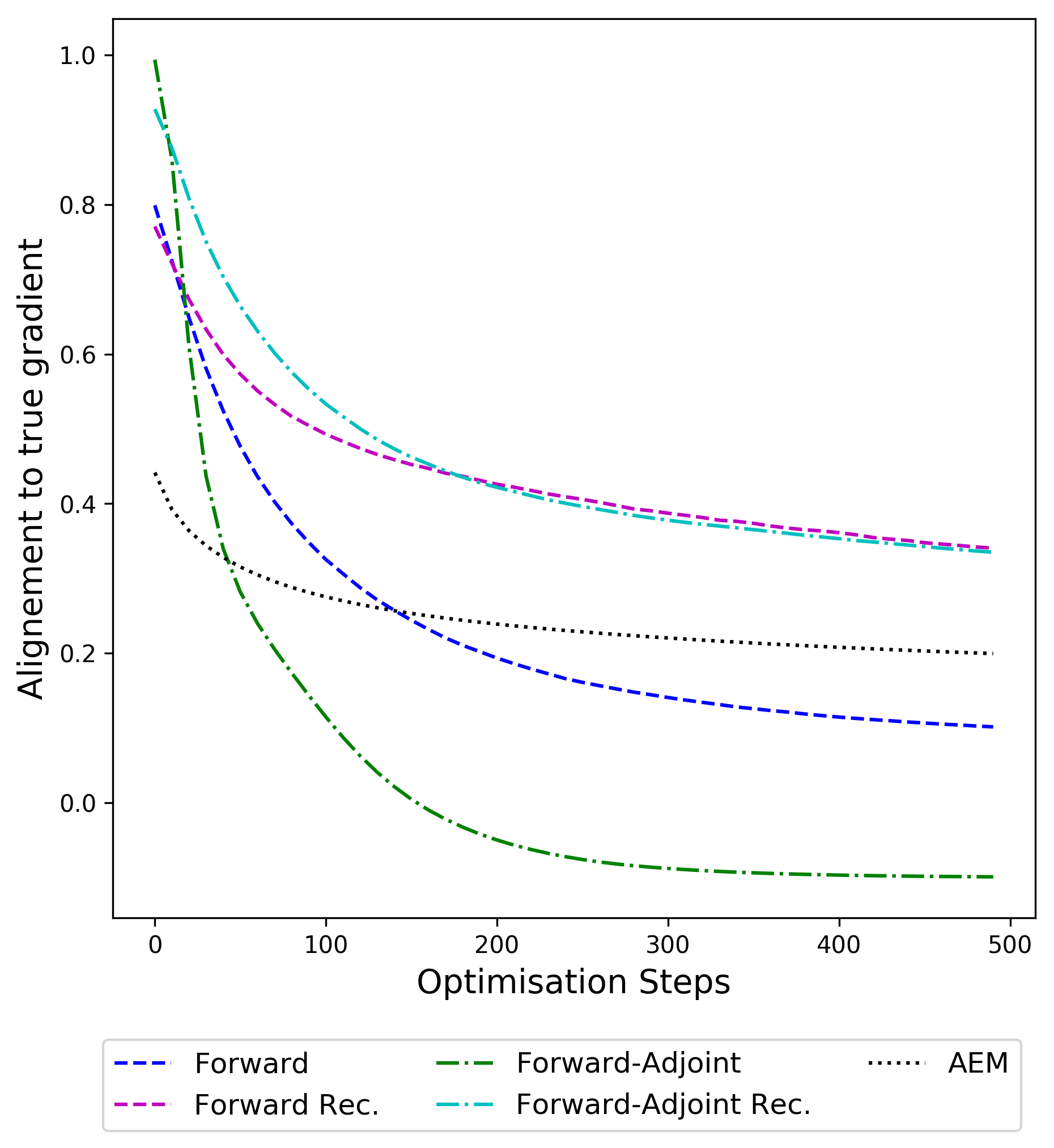}
		\caption{First $500$ steps}
		\label{fig:Angles_balls_B}
	\end{subfigure}
	\caption{Alignment \eqref{eqn:AlignmentExperimental} of approximate gradient to the gradient of the accurate data term $\TrueOp^*(\TrueOp x_n - y)$ for each approach on the ball test set of 64 samples. The alignment is recorded over all minimisation steps for solving the associated variational problem.
		%The alignment typically decreases over the course of the minimisation. 
		On the left (a) for the full trajectory and on the right (b) for the first 500 steps.}
	\label{fig:Angles_balls}
\end{figure}

The benchmark AEM method is not able to correct the gradient as accurately as any of the methods we discussed for the first iterates of the variational problem. However, it does not exhibit a decline of the alignment as drastic as any of the other methods throughout the minimisation process. This can be explained by the lower expressive power of AEM compared to the corrections based on neural networks that does not allow the method to fit the accurate gradient as well for early iterates but prevents overfitting on later iterates, leading to the method being stable throughout the minimisation of the variational functional.

The different behaviour of forward and forward-adjoint methods as well as their recursive counterparts is investigated in Figure \ref{fig:LossFunctions_balls}. We note that in terms of the forward approximation error, applying recursive training makes the key difference in terms of keeping a low error throughout gradient descent. For the adjoint approximation error we note that methods based on the forward scheme that fit a single operator are not able to achieve low error, even at the first iterate due to the fundamental limitations of the method. Forward-adjoint methods on the other hand are able to fit the accurate adjoint well at the first iterates, but also suffer from deteriorated approximation quality for later steps.

\begin{figure}
	\centering
	\begin{subfigure}{.49\textwidth}
		\includegraphics[scale=0.4]{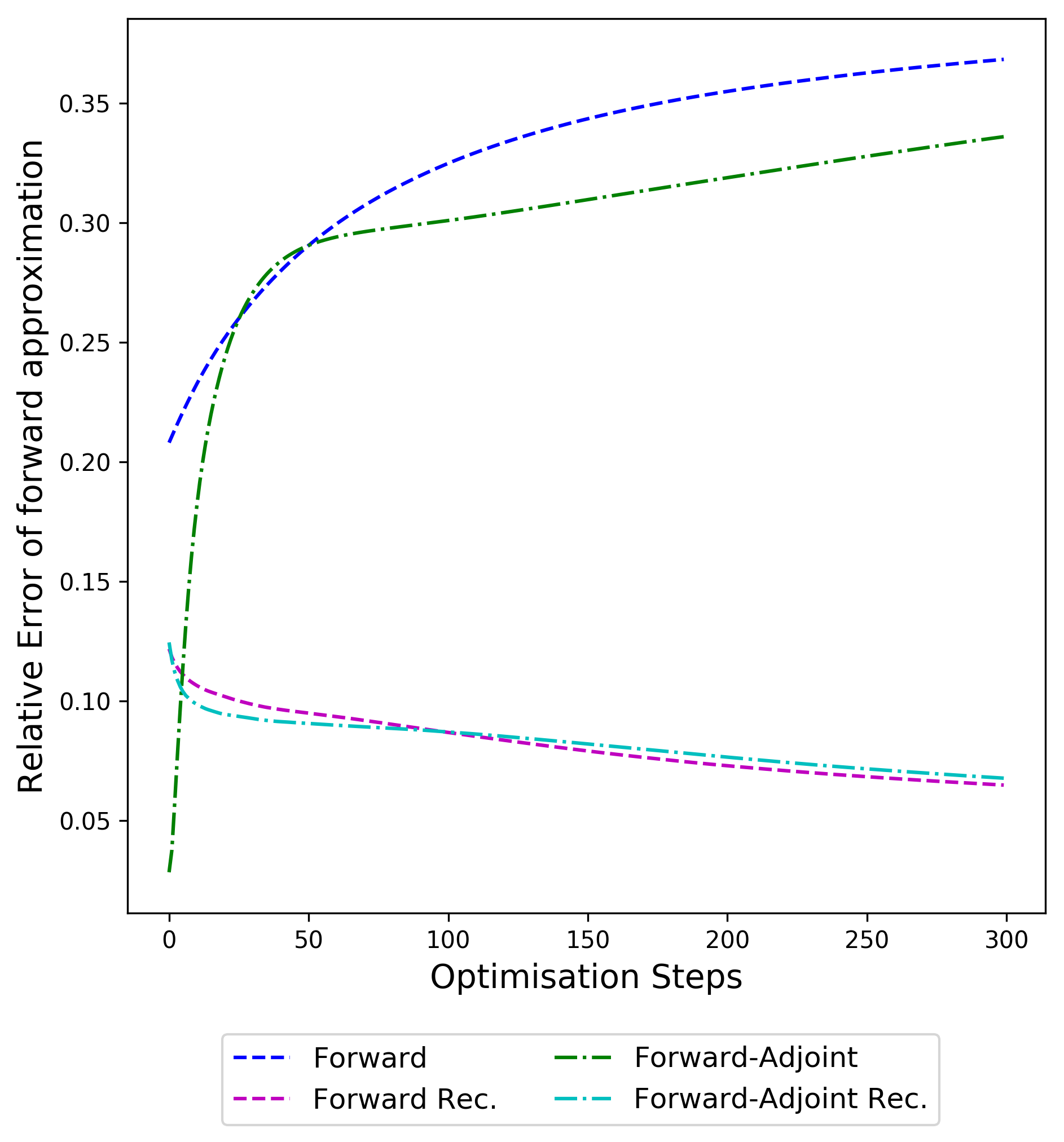}
		\caption{Relative approximation error of forward operator}
	\end{subfigure}
	\begin{subfigure}{.49\textwidth}
		\includegraphics[scale=0.4]{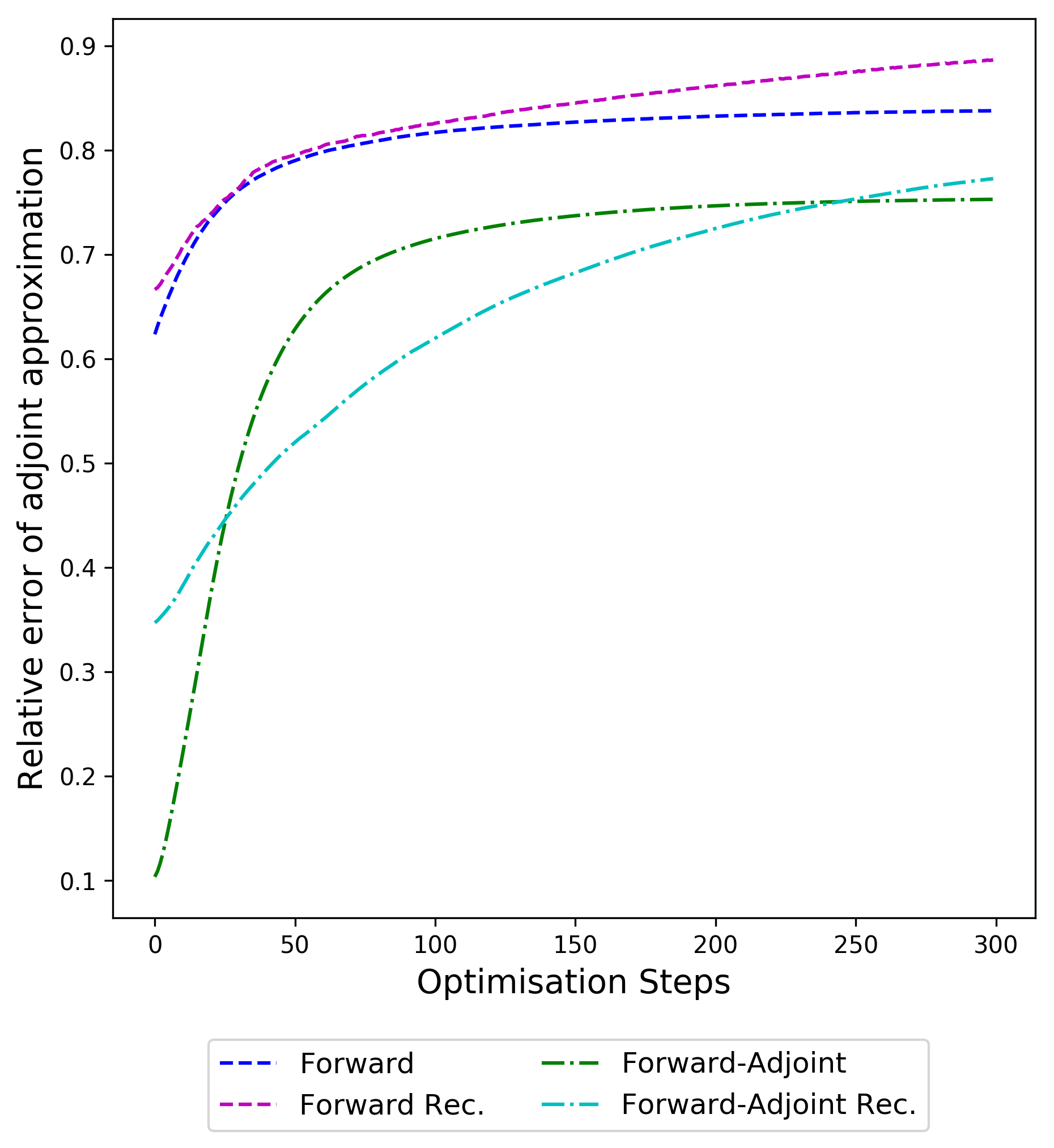}
		\caption{Relative approximation error of adjoint operator}
	\end{subfigure}
	\caption{Approximation error of the model correction compared to the accurate operator on the ball test set of 64 samples, tracked throughout the first $300$ steps of the gradient descent scheme. Left (a): relative error of the forward approximation as defined in \eqref{eqn:ForwardLossFB}.  Right (b): relative error for the adjoint, as defined for the forward only in equation \eqref{eqn:ForwardLoss} and for the forward-adjoint method in  \eqref{eqn:AdjointLossFB}.}
	%\eqref{eqn:ForwardLossFB} and 
	\label{fig:LossFunctions_balls}
\end{figure}

%\Sebastian{I added the discussion of this figure- please check paragraph} 
In Figure \ref{fig:Data_Term_balls}, we see evolution of the data term $\|\TrueOp x_n - y\|_Y$ evaluated using the accurate operator $\TrueOp$ in order to test if the corrections minimise the original variational problem. We note that both recursive methods are able to effectively minimise the data term quickly, with both converging stably to their respective minimal value. This empirical observation shows that the learned reconstructions in fact lead to a variational energy that satisfy Lemma \ref{lem:LowEnergyCloseness} to ensure closeness of minimiser. 
We note that forward-adjoint recursive is able to achieve a lower data loss than its forward only counterpart, which is consistent with the behaviour observed in Figure \ref{fig:Angles_balls}. It is interesting to note, that both methods are able to minimise the accurate data term significantly better than the baseline AEM. When omitting recursive training both the forward only and the forward-adjoint algorithm are not able to minimise the accurate data term well.

Finally, we evaluate the model correction in terms of the distance of the reconstruction to the ground truth image, measured by the relative $L^2$ error shown in Figure \ref{fig:Quality_balls}. We note that all approximation approaches outperform the uncorrected operator in this metric. Both corrections, forward and forward-adjoint, without recursive training lead to a decrease in reconstruction error reconstruction quality for the first $300$ optimisation steps, stagnating or even deteriorating afterwards. This is again consistent with the findings in Figure \ref{fig:Angles_balls}, which show that the gradient generated by these methods does not align with the accurate gradient any more at this point of the minimisation. The recursive counterparts of the forward and forward-adjoint method produce considerably better results, with the recursive forward-adjoint method generating reconstructions that are nearly of the same quality as the ones generated by the accurate operator. The baseline with AEM is converging more slowly than any of the other methods but is able to produce high-quality results after $4000$ gradient descent steps that are on par with the forward recursive method, but are significantly outperformed by the recursive forward-adjoint method.

\begin{figure}[!htb]
	\centering
	\begin{minipage}{0.45\textwidth}
		\centering
		\includegraphics[scale=0.4]{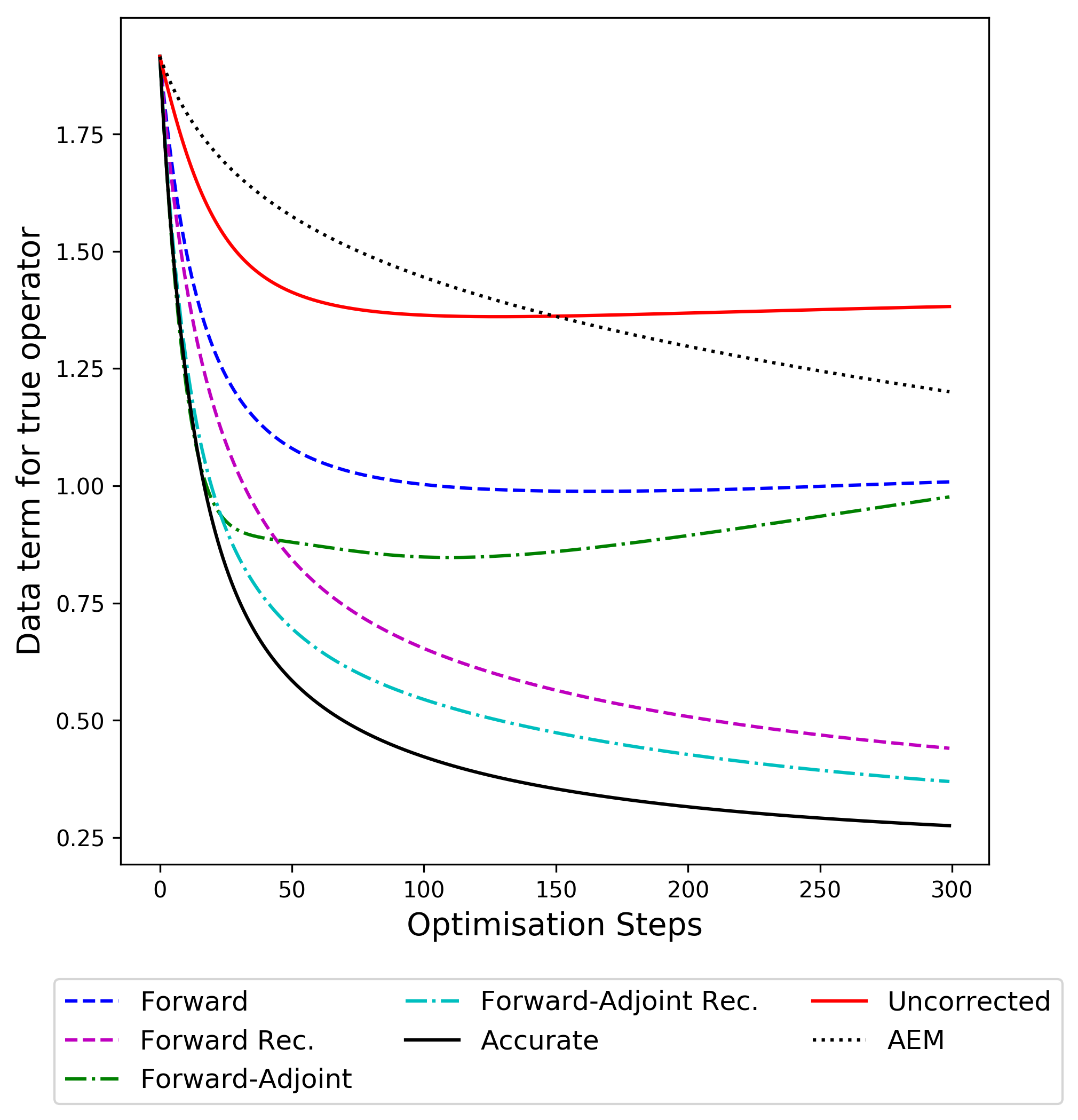}
		\caption{True data term $\|Ax_n - y\|_Y$ evaluated for all methods on the ball test set of 64 samples, tracked throughout the gradient descent scheme.}
		%Recursive methods are able to minimise the accurate data term effectively.}
		\label{fig:Data_Term_balls}
	\end{minipage}
	~
	\begin{minipage}{.45\textwidth}
		\centering
		\includegraphics[scale=0.4]{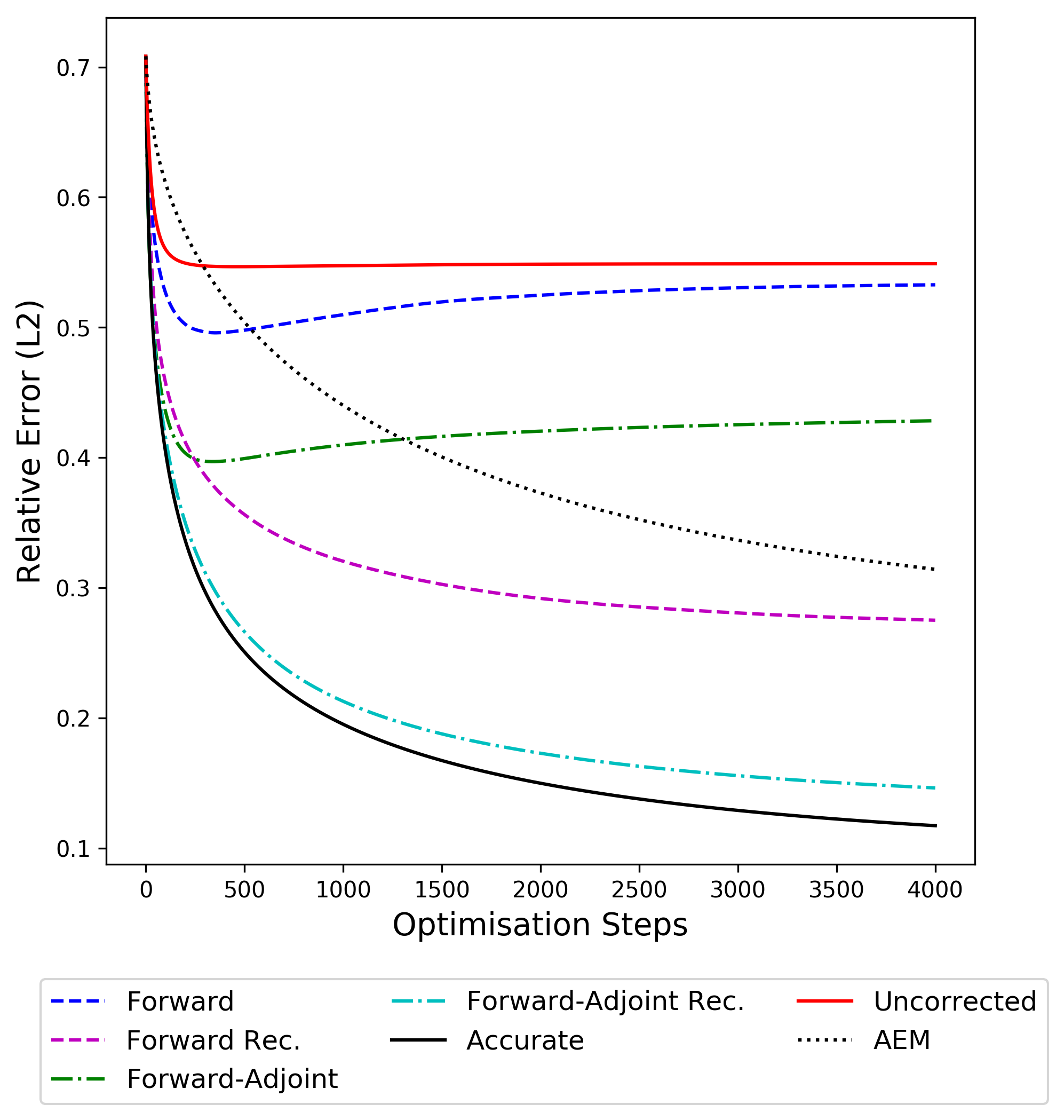}
		\caption{Relative reconstruction error (L2) for all methods on the ball test set of 64 samples, tracked throughout the gradient descent scheme.}
		%The best performance is achieved by the recursive forward-adjoint approach.}
		\label{fig:Quality_balls}
	\end{minipage}%
\end{figure}

For a qualitative evaluation, we show obtained reconstructions in Figure \ref{fig:Reconstructions_balls} for all methods discussed and two samples with different 
behaviour. In the first example, where the ball is close to the line detector, we note 
that all methods are able to correct the errors introduced by the approximate operator to some extent. However, both the forward and forward-adjoint method introduce background 
artefacts when not trained recursively. These artefacts disappear when recursive training is applied, leading to near perfect reconstructions. Compared to AEM as baseline, which 
is able to correct the approximate operator without introducing background artefacts, 
%However, 
the correction by AEM introduces blurred edges of the ball that are not observed by any of the neural network based corrections we are investigating. The second sample is particularly more challenging, with the ball being far from the detector exhibiting 
stronger limited-view artefacts and consequently the approximate operator introduces 
severe artefacts if uncorrected. For the corrections without recursive training we see 
again that both approaches, forward and forward-adjoint, introduce background artefacts. For the forward method, these artefacts can not be suppressed by applying recursive 
training, leaving a severe artefact at the boundary of the domain. Only the recursive 
forward-adjoint is able to produce a reconstruction that is nearly en par with the 
reconstruction obtained with the accurate operator and that does not exhibit any obvious 
artefacts. The baseline with AEM also introduces background artefacts leaking from the 
ball, but those are more structured and less severe than those of all other methods apart from the forward-adjoint recursive approach which gives the best visual results in this 
setting as well. The visual quality of the reconstructions hence coincides with the 
quantitative results discussed in Figure \ref{fig:Quality_balls}.

\begin{figure}
	\centering
	\begin{subfigure}{\textwidth}
		\centering
		\caption{Reconstructions for phantom close to the line detector}
		\includegraphics[scale=0.4]{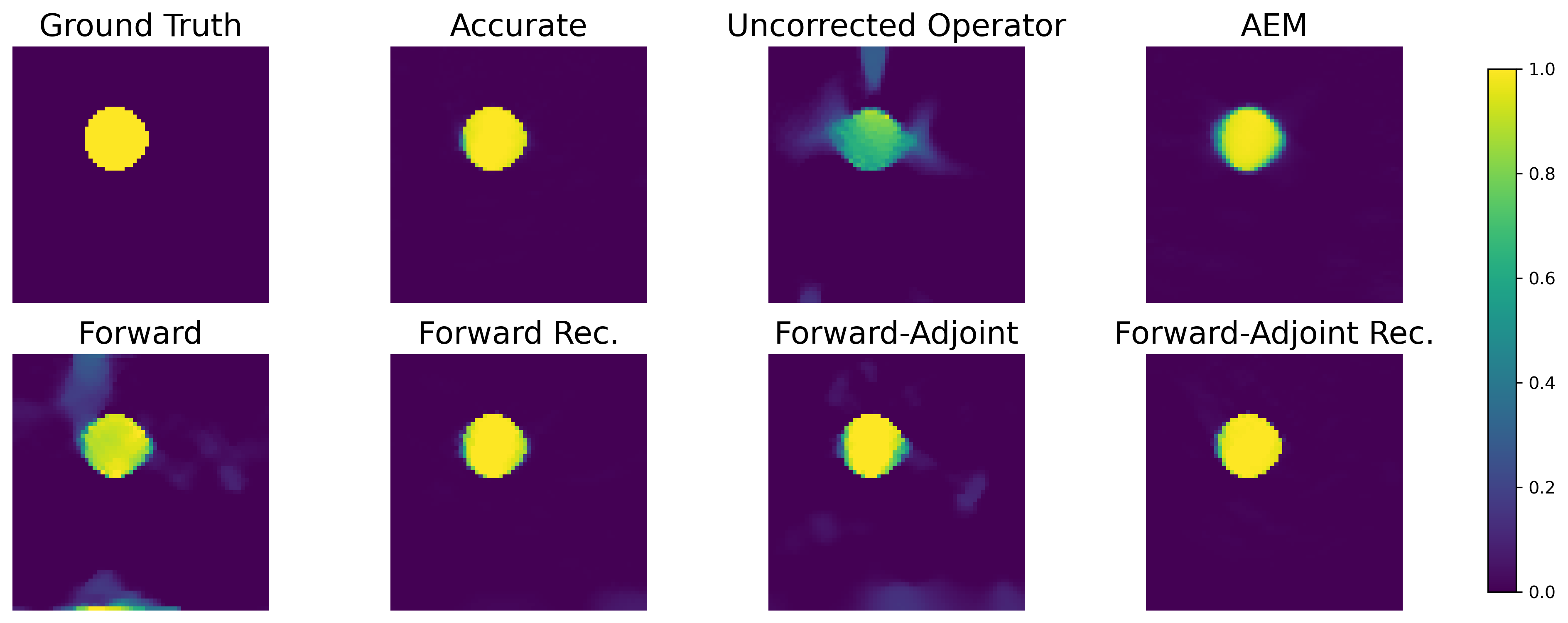}
	\end{subfigure}
	\begin{subfigure}{\textwidth}
		\centering
		\caption{Reconstructions for phantom far from the line detector}
		\includegraphics[scale=0.4]{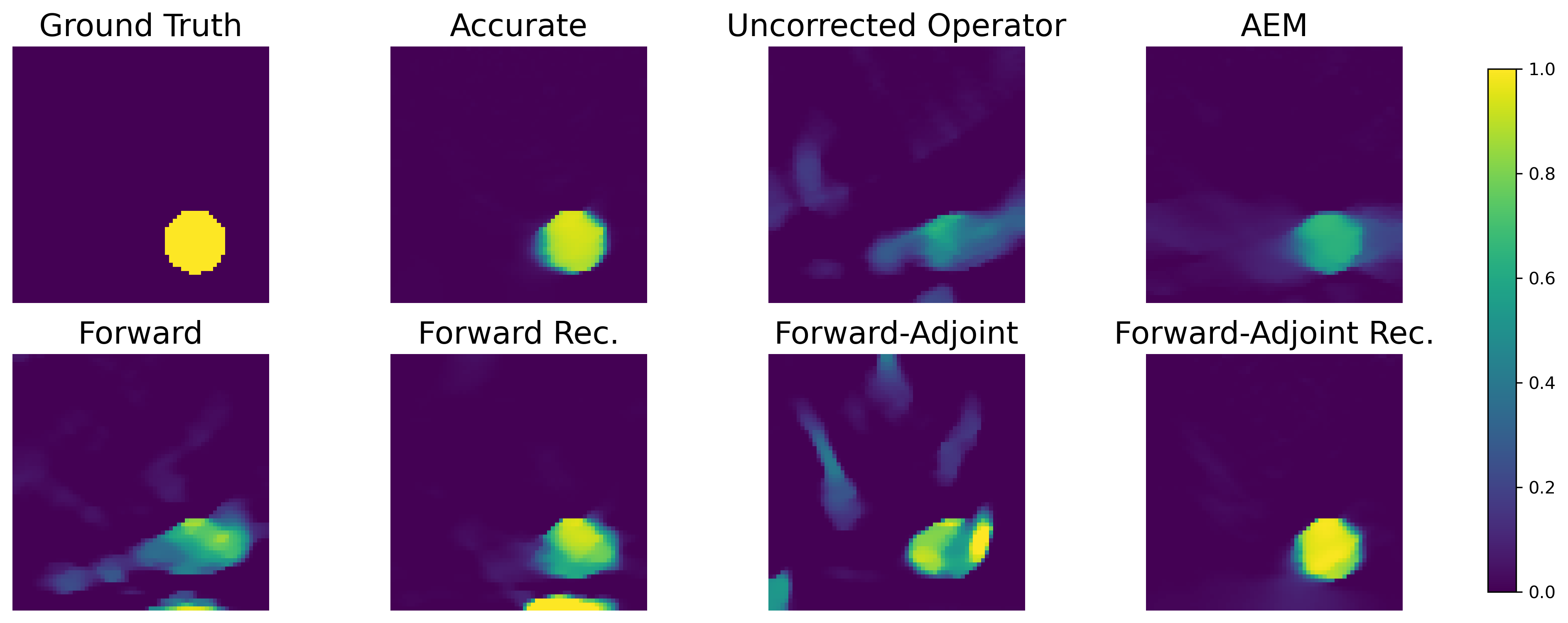}
	\end{subfigure}
	\caption{Reconstructions for the various model correction algorithms for two samples from the ball set. We show the results after $4000$ steps of gradient descent. Huber regularisation is used.
		Top (a): Phantom close to the detector, which corresponds to an easy setting for limited-view PAT. Bottom (b): Phantom far from the the detector, which corresponds to a very challenging setting.   
	}
	\label{fig:Reconstructions_balls}
\end{figure}

\revision{Figure \ref{fig:Measurements_0} visualises the effect of the forward-adjoint recursive approach on the ball images, showing $\TrueOp x_{0}$, $\ApproxOp x_{0}$ and $\CorrectedOp (x_0)$ as well as the gradients of the data term for each of the operators $\TrueOp$, $\ApproxOp$ and $\CorrectedOp$. The visualisations are computed for Sample (b) in Figure \ref{fig:Reconstructions_balls} on the ball samples. We see that the forward-adjoint approach is in fact able to correct for approximation artefacts \textit{both} in the forward operator as well as in its adjoint, leading to a good approximation of the accurate gradient of the data term.}

\begin{figure}
	\centering
	\begin{subfigure}{.8\textwidth}
		\includegraphics[width=\textwidth,trim={2cm 9.5cm 2cm 8.5cm},clip]{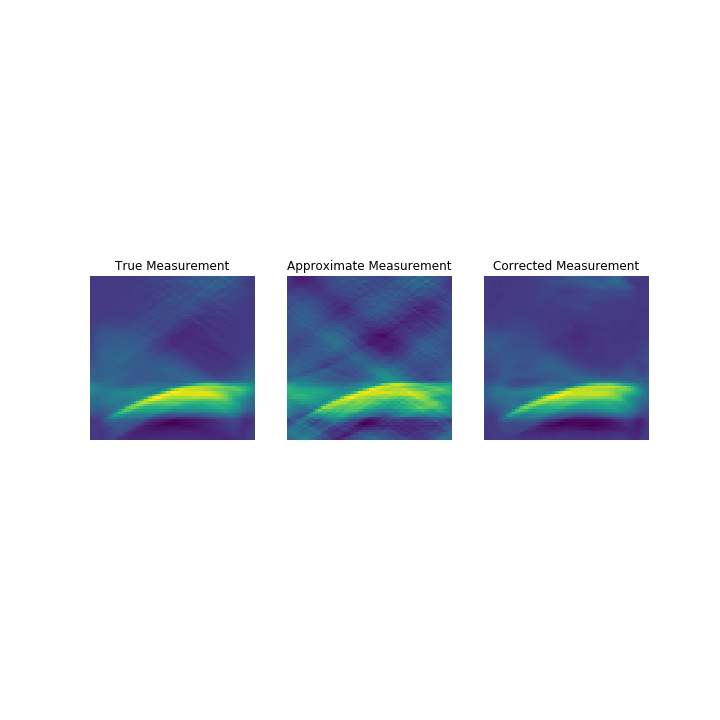}
		\caption{Measurements computed at the initial reconstruction for the exact, approximate and corrected Operators. From left to right $\TrueOp x_{0}$, $\ApproxOp x_{0}$, $\CorrectedOp(x_{0})$}
	\end{subfigure}
	\begin{subfigure}{.8\textwidth}
		\includegraphics[width=\textwidth,trim={2cm 9.5cm 2cm 8.5cm},clip]{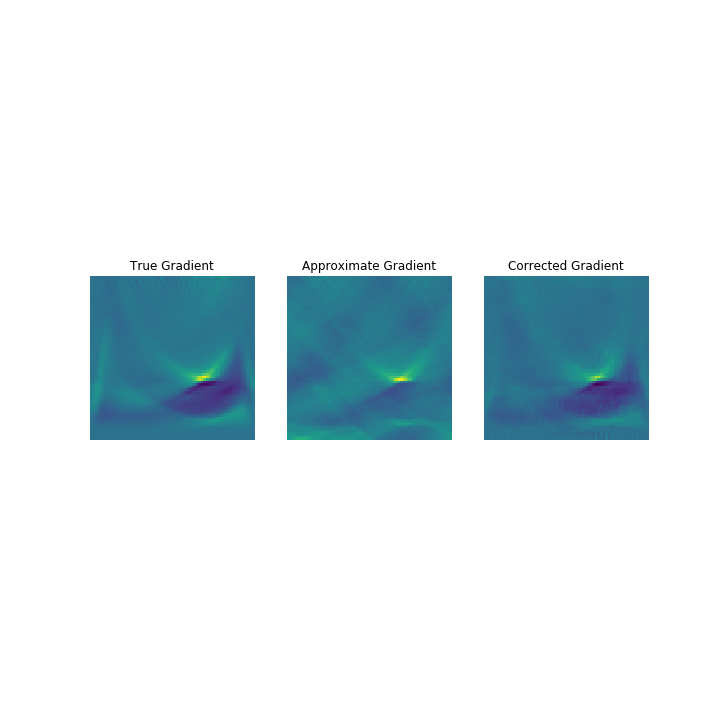}
		\caption{Gradients of the data term, computed at the initial reconstruction for the exact, approximate and corrected Operators.}
	\end{subfigure}
	\caption{\revision{Estimated measurements and gradients at initialisation of the gradient descent scheme for a sample from the ball images.}}
	\label{fig:Measurements_0}
\end{figure}

\paragraph{Vessel phantoms}

% \begin{figure}
%     \centering
%     \begin{subfigure}{.49\textwidth}
%         \includegraphics[scale=0.45]{}
%         \caption{Forward Error}
%     \end{subfigure}
%     \begin{subfigure}{.49\textwidth}
%         \includegraphics[scale=0.45]{}
%         \caption{Adjoint Error}
%     \end{subfigure}
%     \caption{Approximation Errors on vessel data}
%     \label{fig:Approximation_Error_vessels}
% \end{figure}

% \Tanja{Figures 9 and 10 are positioned or referred in wrong order in this subsection}

The results on the vessel phantoms quantitatively match the overall behaviour observed on the ball set.
The alignment, as shown in Figure \ref{fig:Alignement_vessels}, is again initially higher with forward-adjoint methods achieving higher values as forward only methods. If no recursive training is applied, alignment declines very quickly. AEM is again generating gradients of comparatively low initial alignment, that however stays relatively steady throughout solving the variational problem. We note that the overall alignment is significantly lower than in the case of the ball samples, reflecting the additional difficulty of the vessel set.

% The forward and adjoint errors are shown in Figure \ref{fig:Approximation_Error_vessels}. We again note that the key differences open between forward and forward-adjoint methods in terms of initial adjoint error and between 

The relative error of the reconstructions compared to the ground truth can be seen in Figure \ref{fig:Quality_vessels}. We again see both the forward and forward-adjoint methods fail to improve reconstruction quality further early into the minimisation 
process if recursive training is omitted. In case recursive training is applied, both methods lead to a clear improvement over the uncorrected operator, with the forward-adjoint approach again performing considerably better than the forward only. On the vessel samples we however note a considerably larger gap between the forward-adjoint correction and the accurate operator that is caused by the extremely challenging nature of the vessel set. The AEM baseline converges slowly on the vessels, an indication that the estimated covariance matrix is fairly ill-conditioned. We hence additionally report the reconstruction quality at convergence, which we observed after $20000$ steps of gradient descent. While this is a competitive reconstruction, it is still outperformed slightly by the recursive forward-adjoint method. We remark that we have applied early stopping for all other methods on the vessel samples.

\begin{figure}[!htb]
	\centering
	\begin{minipage}{0.45\textwidth}
		\centering
		\includegraphics[scale=0.4]{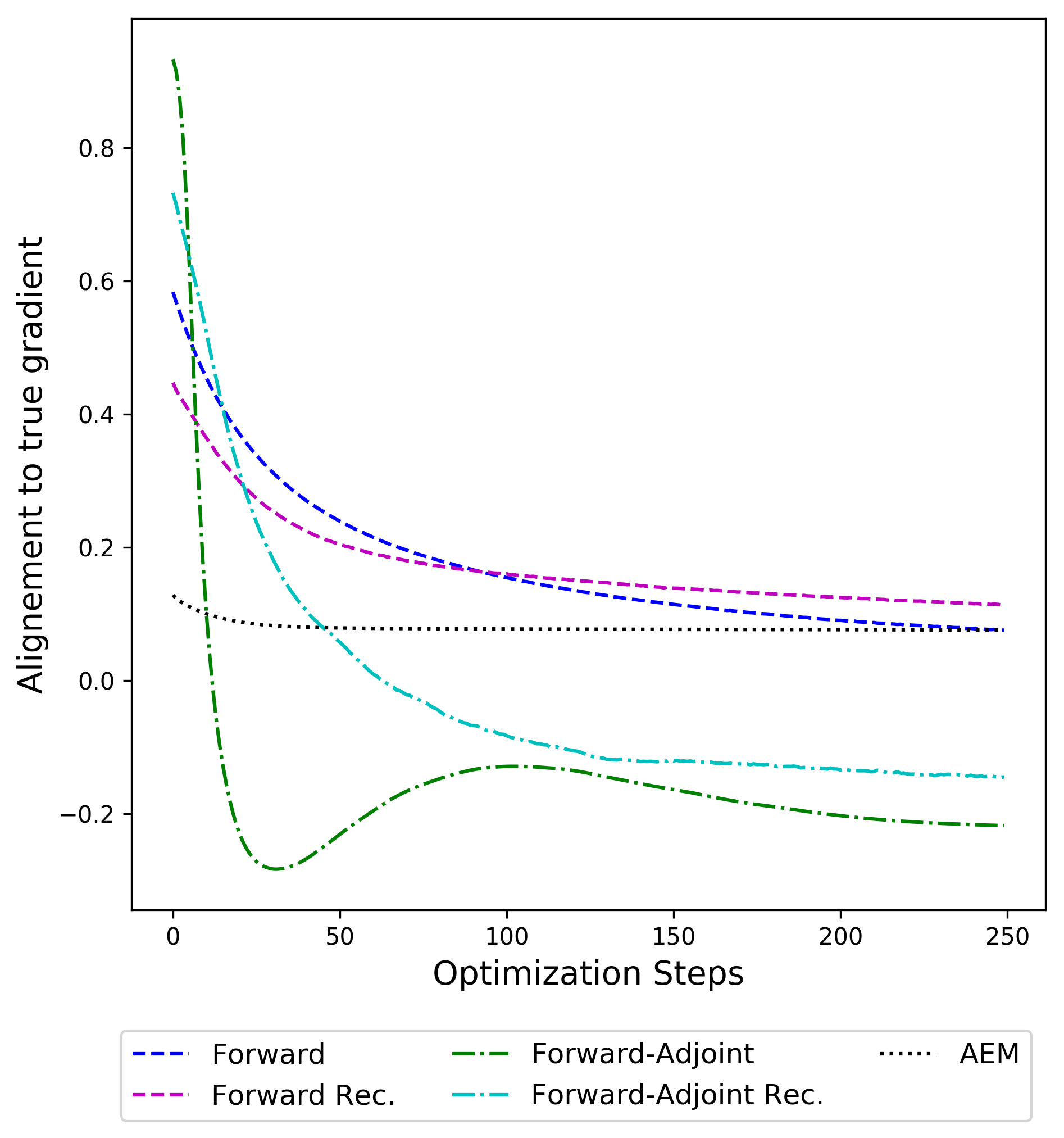}
		\caption{Alignment \eqref{eqn:AlignmentExperimental} of approximate gradient to the gradient of the accurate data term $\TrueOp^*(\TrueOp x_n - y)$ for each method on the vessel test set with 64 samples, recorded over the 250 steps of solving the associated variational problem.}
		\label{fig:Alignement_vessels}
	\end{minipage}
	~
	\begin{minipage}{.45\textwidth}
		\centering
		\includegraphics[scale=0.4]{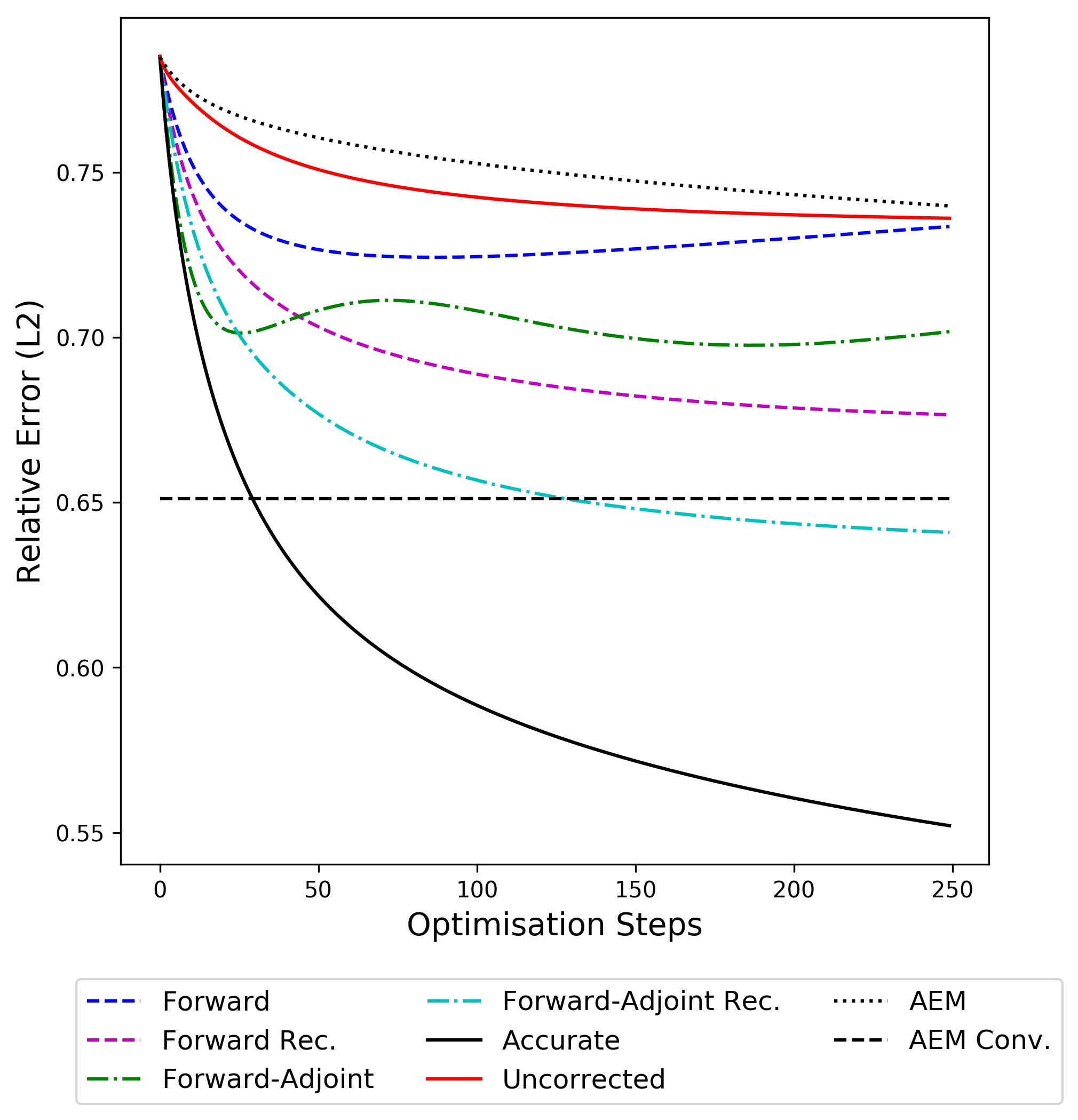}
		\caption{Relative reconstruction error (L2) for all methods on the vessel test set with 64 samples, tracked throughout the gradient descent scheme. \revision{250 steps of gradient descent were performed for all methods but AEM, where 20000 steps were taken.}}
		\label{fig:Quality_vessels}
	\end{minipage}%
	
\end{figure}

\begin{figure}[ht!]
	% \centering
	% \begin{subfigure}{.32\textwidth}
	% \centering
	%     \includegraphics[scale=1.5]{images/vessels/GroundTruth.png}
	%     \caption{Ground Truth}
	% \end{subfigure}
	% \begin{subfigure}{.32\textwidth}
	% \centering
	%     \includegraphics[scale=1.5]{images/vessels/accurateOperator.png}
	%     \caption{accurate operator}
	% \end{subfigure}
	% \begin{subfigure}{.32\textwidth}
	% \centering
	%     \includegraphics[scale=1.5]{images/vessels/Uncorrected.png}
	%     \caption{Uncorrected Operator}
	% \end{subfigure}
	
	% \begin{subfigure}{.32\textwidth}
	% \centering
	%     \includegraphics[scale=1.5]{images/vessels/Adjoint.png}
	%     \caption{Forward recursive}
	% \end{subfigure}
	% \begin{subfigure}{.32\textwidth}
	% \centering
	%     \includegraphics[scale=1.5]{images/vessels/TwoNets.png}
	%     \caption{forward-adjoint}
	% \end{subfigure}
	% \begin{subfigure}{.32\textwidth}
	% \centering
	%     \includegraphics[scale=1.5]{images/vessels/TwoNetsRekursive.png}
	%     \caption{forward-adjoint recursive}
	% \end{subfigure}
	\centering
	\begin{subfigure}{\textwidth}
		\centering
		\caption{Reconstructions for first vessel phantom}
		\includegraphics[scale=0.4]{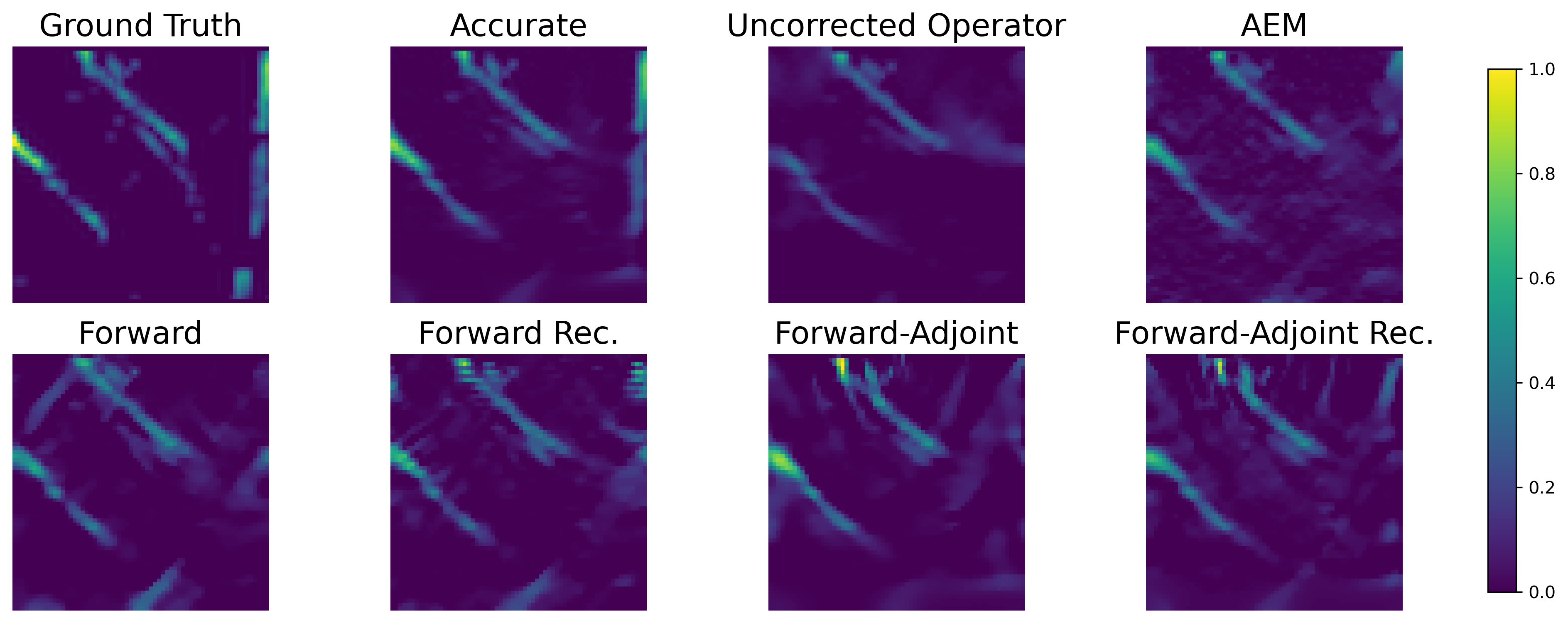}
	\end{subfigure}
	\begin{subfigure}{\textwidth}
		\centering
		\caption{Reconstructions for second vessel phantom}
		\includegraphics[scale=0.4]{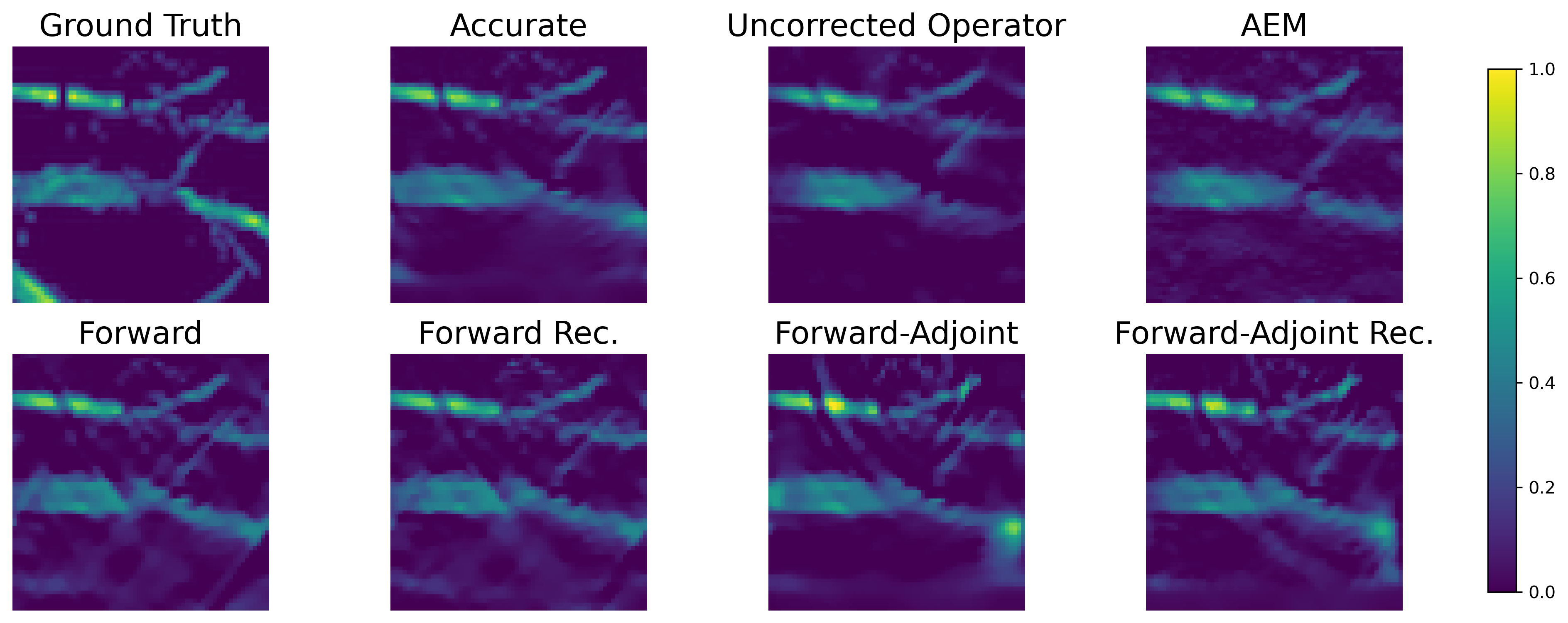}
	\end{subfigure}
	\caption{Reconstructions on vessels using the various operator corrections. We show the results after $250$ iterations of gradient descent for all methods but AEM, for which $20000$ iteration steps were taken. Huber regularisation is used.}
	\label{fig:Reconstructions_vessels}
\end{figure}

We present reconstructions for all discussed methods for two samples in Figure \ref{fig:Reconstructions_vessels}. We note for the first sample that the vessel structure at the right of the image completely disappears when using the uncorrected approximation. In fact, the corresponding measurement is severely reduced due to the thresholding of incident waves in the approximate model.
%Note in particular that information horizontal lines is lost nearly completely in the approximation. \Sebastian{added the previous two lines. Please check/reformulate}  Andreas: I think this should be sufficient as above
Hence, no correction method is able to fully recover the vessel structure at the right of the first sample, with AEM, forward method and forward-adjoint method coming closest. For all correction methods we observe a deterioration in reconstruction quality compared to the accurate operator. We note that the recursive forward method seems to lead to striping artefacts. Consistent with the quantitative results in Figure \ref{fig:Quality_vessels} the forward-adjoint recursive reconstructions are of the highest visual quality compared to the other reconstructions using a model corrections, leading to sharper results than the AEM baseline and to fewer artefacts than methods based on the forward only approach or those omitting recursive training. 
We remark that, up to some extent, perceived differences in smoothness can also be caused as the regularisation parameter has been optimised for all methods individually and hence might differ slightly between reconstructions.

To this end, we note that the training set with a total of $2760$ samples ($5520$ with rotations) is fairly small when taking into account the complexity of the vessel structures, see for instance the discussion with respect to AEM in  \cite{sahlstrom2020modeling}. It is hence possible that the remaining gap in reconstruction quality to the accurate operator could be closed further by using a more extensive training set. However, we expect that the gap cannot be closed completely on samples with a comparable complexity to the vessel phantoms as too much information might be lost in the thresholding step of the approximate operator that cannot be recovered even when taking into account the structure of the samples with highly parametrised learned corrections. This underlines the necessity of a statistical correction as discussed throughout Section \ref{sec:learnModelCor} to compensate for lost kernel directions in the approximate operator. 

\paragraph{Model transfer between vessel and ball phantoms}
\revision{
	In this paragraph we investigate how well the operator corrections trained on either the ball or the vessel samples generalise to the other of the two data sets. In particular, we discuss using models trained on balls to reconstruct vessels and vice versa. The aim of these experiments is to obtain a first understanding on how well trained model corrections generalise to new data sets in general, especially if the new set is very different from the training data in terms of image characteristics.
	\\
	When using models trained on the ball samples and tested on vessel images, we notice that the model gives reasonable corrections at initialisation of the variational scheme for the vessel samples, yielding corrected gradients. Nevertheless, the correction quality deteriorated rapidly during the gradient descent steps and the final reconstruction was not satisfactory compared to reconstructions obtained with the uncorrected approximate operator $\ApproxOp$. We hypothesise that the ball data was too distinct from the vessel samples and that the structure of the ball data was too simple for the learned model to perform reasonably on the much more complicated vessel data. In particular, the learned corrections were potentially fit very tightly to data and measurements induced by the ball phantoms that does not contain the same level of complexity as the vessel phantoms. Heuristically speaking, the data manifold of the ball samples seems to be too low-dimensional to generalise to other data.
	%yielding gradients that were visually clearly superior and more aligned with the accurate gradients than their uncorrected counterparts. The quality of the correction however deteriorated very rapidly during gradient descent over the variational functional. The final reconstruction quality obtained was not superior to reconstructions obtained with the uncorrected operator $\ApproxOp$ when omitting all corrections. We hypothesise that the ball data was too distinct from that of the vessel data and that the structure of the vessel data was too simple for the learned model to perform reasonably on the much more complicated vessel data. In particular, the learned corrections were potentially fit very tightly to data and measurements induced by the ball phantoms that does not contain the same level of complexity as the vessel phantoms.
	\\
	On the other hand, when using the forward-adjoint recursive model trained on the vessel samples on the ball samples, we obtained results that are clear improvements over reconstructions obtained with the uncorrected operator and are even comparable to the non-recursively trained methods on the ball data. We do, however, not match the performance of the forward-adjoint recursive model trained on the ball samples themselves. Figure \ref{fig:CrossData} shows reconstructions on a ball sample for various methods trained on the vessel samples. The reconstructions show a well-localised ball reconstruction with fairly sharp edges even in the challenging case of the ball sample located far from the detector plate. The results can be compared to results obtained with methods trained on the ball samples, as shown in Figure \ref{fig:Reconstructions_balls}. The visual assessment of reconstruction quality matches the quantitative results in terms of $L^2$ error as shown in Table \ref{tab:CrossData}.
	\\
	Finally, we note in both Figure \ref{fig:CrossData} and Table \ref{tab:CrossData} that adopting the regularisation parameter $\lambda$ of the forward-adjoint correction trained on vessel samples to a new optimal value for the ball data yields considerably improvements in performance. This demonstrates one of the main advantages of explicit corrections over their implicit counterparts, as separating between model correction and regularisation allows for an adaption of the regularisation parameter to the task, independently of the model correction learned.
	%  \Andreas{NICE! :) }
	%  \Andreas{We need to take care of capitalisation for Forward-Adjoint or not, it needs to be consistent. }
}

\begin{figure}
	\centering
	\includegraphics[width=\textwidth]{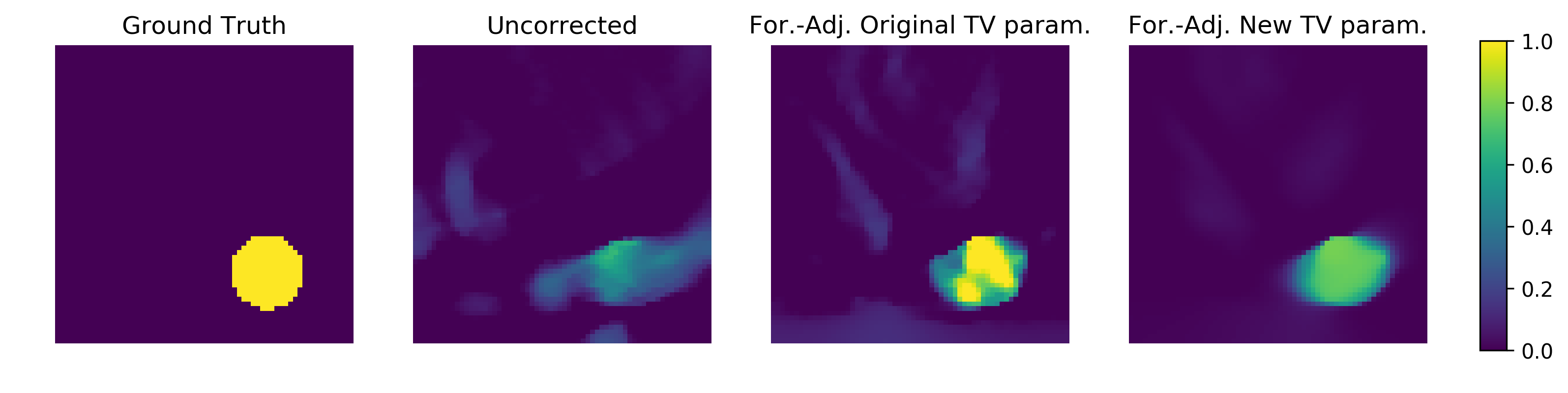}
	\caption{\revision{Models trained on vessel samples, evaluated on ball samples. From left to right: Ground truth image, reconstruction using the uncorrected operator, reconstruction using a recursive forward-adjoint correction with the same TV parameter as used on vessel data, reconstruction using a recursive forward-adjoint correction with new optimal TV parameter. 
			% \Tanja{I would like to remove the generalisation AEM results. Not because it looks so bad but because we don't have a good explanation why it looks so bad (I've been wondering the same problem of loosing the contrast also in Fig. 8. It cannot be explained by AEM but is due to something else, and is contradictory to Meghdoot's findings in diffuse optical tomography (we have done some preliminary simulations). Would you be OK with these generalisation results without AEM?}
	}}
	\label{fig:CrossData}
\end{figure}

\begin{table}[]
	\centering
	\begin{tabular}{l | c | c}
		& Training Data & $L^2$ error \\
		\toprule
		Accurate Operator & - & 0.11
		\\
		Approximate Operator & - & 0.55 \\
		\midrule
		%  AEM & balls & 0.31 \\
		Forward-Adjoint & balls & 0.15  \\
		\midrule
		% AEM & vessels & 0.58\\
		For.-Adj. (old TV param.) & vessels & 0.40\\
		For.-Adj. (new TV param.) & vessels  & 0.35
	\end{tabular}
	\caption{\revision{Performance of the recursive forward-adjoint correction on ball samples. We evaluate the performance of models trained on vessel samples and compare to models trained on ball samples. Results are reported in terms of the $L^2$ error compared to the ground truth image.}}
	\label{tab:CrossData}
\end{table}

% \Andreas{I added the last sentence, is that okay?}
%\Sebastian{I added the last paragraph - please check. Sounds good to me}

% \Tanja{we could add here some discussion on the size of the training set. Maybe using a larger training set in the case of the vessel data would improve the results. At least AEM is typically trained using a larger number of samples.}

% \Sebastian{Horizontal lines are even 'thresholded' (or prone to) out in Fourier space, making reconstruction very hard}
%\Andreas{Ref to}

% as shown in figures \ref{fig:Approximation_Error_vessels} and \ref{fig:Quality_vessels}. However, all approximation qualities are worse compared to the ball data, which reflects the more complex nature of the vessel data that requires to correct the approximate operator on a bigger range of data. We can still observe a big improvement of the reconstruction quality over the uncorrected approximate operator, with the methods exhibiting the same strengths and weaknesses as on the ball data. However, on this more complex data we are not able to match the accurate operator perfectly anymore.

% Figure \ref{fig:Reconstructions_vessels} shows reconstructions on the vessel data for various operators.

\section{Conclusion}\label{sec:Conclusions}

In this paper, we have introduced various approaches to learn a data-driven explicit model correction for inverse problems to be employed within a variational reconstruction framework. We have investigated several strategies to learn such a correction, starting with a simple forward correction for which we pointed out some fundamental limitations. In particular, we observed that this approach is limited by the range of the adjoint of the approximate operator when employed in a gradient descent scheme and is therefore unable to fully correct all modelling errors. To mitigate this, we have proposed a forward-adjoint correction as an alternative approach, overcoming these limitations by fitting an independent adjoint correction.
%and have shown that under certain conditions we can guarantee convergence of the gradient descent scheme to a solution that is close to the original minimiser given the accurate operator.

To ensure a model correction that can be employed throughout the optimisation process and avoid overfitting to the initial reconstruction, we proposed to augment all methods with a recursive training scheme. For the recursive forward-adjoint correction we provided a theoretical convergence analysis to show that the method approximates the accurate solution when trained to a sufficiently low loss. Finally, we have shown the potential of our approach on the task of limited-view photoacoustic tomography, demonstrating our theoretical considerations in practice and showing improved results compared to the commonly used AEM.

\revision{For the data chosen, the algorithm can be trained very quickly, requiring <2h for non-recursive experiments and around 16h for their recursive counterparts. For images larger than the $64 \times 64$ format used in the paper, the number of operations scales linearly with the number of pixels and hence quadratically with resolution in 2D and cubically in 3D. The actual increase in computational time might scale lower than the increase in operators as a higher amount of operations per layer increases the potential for parallelisation. The number of network parameters however does not necessarily change with resolution. Higher resolutions might make a more deep architecture appropriate, but the increase in weights caused by this would typically be strongly sublinear.}

This work is orthogonal to previous attempts at using neural networks to learn operator corrections, that were exclusively focused on the idea of implicit model corrections, learning the correction operator and a reconstruction prior simultaneously in an end-to-end trained reconstruction network. While this approach comes with advantages in terms of performance, our \textit{explicit} model correction allows to flexibly use any prior model alongside the corrected operator and can be integrated in the well-established framework of variational regularisation. Furthermore, our work unveils some of the challenges in model correction that are hidden in implicit schemes. Our findings can be used to inspire the design of novel implicit algorithms \revision{and allows for an analysis of implicit correction in future studies}. In particular, our observations on the limitations of the range of the adjoint of the approximation motivates the use of corrections in both reconstruction and data space for implicit model correction, motivating the use of algorithms such as learned primal-dual \cite{Adler2018}.

In future work one could apply the proposed method to different fields of application, such as computed tomography. In this application, the accurate model can be obtained by expensive photon-level Monte Carlo simulations, whereas a computationally efficient approximation is given by the widely-used ray transform. In general, applications to inverse problems involving non-linear operators are an interesting direction deserving further study, \revision{we refer to a related study exploring first ideas in this direction \cite{smyl2020learning}}. A class of very challenging applications are settings where we do not have explicit access to the accurate forward operator, but instead have access to empirical measurements only. Examples of such problems are tomography with slightly wrong estimated angles or deconvolution problems with errors in the point-spread function. These problems differ from the setting considered in this paper, where explicit access to the accurate operator was given and the approximation was performed to overcome computational constraints. In particular, the concept of recursive training, as presented here, requires explicit access to the accurate operator and is thus not readily applicable for problems where we have access to empirical measurements only, making them particularly challenging. \revision{We believe that in such settings, alternate training regimes that are not fully supervised and make use of secondary measures will be needed, estimating the approximation error from the data itself.}

%\Andreas{I agree with Simon's mail, maybe just omit it, the previous paragraph is a nice closure.}
%Finally, one could combine the proposed approach with AEM techniques yielding an algorithm that can ensure that the exact behaviour of the learned approximation is accounted for in the distribution used to compute the data term. \Sebastian{having troubles formulating the last paragraph - please check.} \Andreas{Yeah, that's a "Simon ToDo".}
%\Simon{on the last point : we discussed an "iterative AEM" - i.e. recompute approximation error with first learned correction, rinse and repeat.}

Finally, we mention a possible combination of the proposed approach with AEM techniques. Since the latter, after training, yields a multi-variate Normal distribution as an estimate of the distribution of model errors it becomes increasingly unreliable as the non-Gaussianity of the accurate distribution increases. However, after an initial nonlinear correction of the form $\CorrectedOp$ described here, the AEM could be re-estimated using such a model. Commensurately, the estimated statistics of the model error from the AEM could be used in place of the simple $L^2$-loss used in the training in \eqref{eqn:ForwardLoss} and \eqref{eqn:ForwardLossFB} for example (i.e. the norm implied in the space $Y$). A possible future research direction could therefore be to iterate these approaches with a view to obtaining a more accurate probabilistic estimate of the eventual remaining model errors. 

% Discussion: Implicit vs Explicit correction.

% Conclusion: Forward model correction is an appealing idea, but it turns out a lot trickier than it seems (Correcting adjoint, recursive training). Nevertheless we can do some nice things and we learned a lot, so please publish this even if it is just to save other people retreading the same ground...

\subsection*{Acknowledgements}
This work was partially supported by the Academy of Finland Projects 312123 and 312342 (Finnish Centre of Excellence in Inverse Modelling and Imaging, 2018--2025), Academy of Finland Projects 334817 and 314411, Jane and Aatos Erkko Foundation, as well as British Heart Foundation grant NH/18/1/33511, CMIC-EPSRC platform grant (EP/M020533/1), and EPSRC-Wellcome grant WT101957.

CBS acknowledges support from the Leverhulme Trust project on ‘Breaking the non-convexity barrier’, the Philip Leverhulme Prize, the EPSRC grants EP/S026045/1 and EP/T003553/1, the EPSRC Centre Nr. EP/N014588/1, the Wellcome Innovator Award RG98755, the RISE projects CHiPS and NoMADS, the Cantab Capital Institute for the Mathematics of Information and the Alan Turing Institute.

The work by SL was supported by the EPSRC grant EP/L016516/1 for the University of Cambridge Centre for Doctoral Training, the Cambridge Centre for Analysis and by the Cantab Capital Institute for the Mathematics of Information.

SA acknowledges support of EPSRC grants EP/N022750/1 and EP/T000864/1.

All authors acknowledge helpful discussions with Jonas Adler, Jari Kapio, Yury Korolev,  Ozan \"Oktem, and Peter Maass amongst others.

\bibliographystyle{siam}
\bibliography{Inverse_problems_refs}

\appendix

\section{An approximate model for photoacoustic tomography}\label{sec:appxPAT}
Here we discuss the accurate and approximate model as previously used in \cite{hauptmann2018MLMIR}. In photoacoustic tomography (PAT) a short pulse of near-infrared light is absorbed by chromophores in biological tissue. For a sufficiently short pulse, a the quantity of interest will result as a spatially-varying pressure increase $x$, which will initiate an 
ultrasound (US) pulse (\textit{photoacoustic effect}), that then propagates to the tissue surface. The measurement consists of the detected waves in space-time at the boundary of the tissue; this set of pressure time series constitutes the measured PA data $y$. 

For the forward model, this acoustic propagation is commonly modelled by an initial value problem for the wave equation \cite{Cox2005},
\begin{equation}
\label{eqn:PATfwd}
(\partial_{tt} - c^2 \Delta) p(\mathbf{x},t) = 0, \quad p(\mathbf{x},t = 0) = x(\mathbf{x}), \quad \partial_t p(\mathbf{x},t = 0) = 0, \text{ with } \mathbf{x}\in\R^2.
\end{equation}
%note we write here for the coordinates $\mathbf{x}\in\R^2$.
%\Simon{Not sure what you mean by this phrase - is something missing? I think you mean that $\mathbf{x}\equiv (x,t)$ is that right ?}
The measurement is then modelled as a linear operator $\mathcal{M}$ acting on the pressure field $p(\mathbf{x},t)$ restricted to the boundary of the computational domain $\Omega$ and a finite time window:
\begin{equation}
y = \mathcal{M} \, p_{|\partial \Omega \times (0,T)}. \label{eqn:Measurement}
\end{equation}
Together, equations \eqref{eqn:PATfwd} and \eqref{eqn:Measurement} define the linear forward model that we consider in this study
\begin{equation}
Ax=y,
\label{eqn:Axeqy}
\end{equation}
from initial pressure $x$ to the measured time series $y$. This accurate forward model can be simulated by a pseudo-spectral time-stepping model as outlined in \cite{treeby2010,Treeby2012}. 

For the approximate model, we can exploit the fact that in our case the measurement points lie on a line ($\mathbf{x}_2=0$) outside the support of $x$, the pressure there can be related to $x$ by \cite{Cox2005,Koestli2001}:
\begin{align}
p(\mathbf{x}_1,t) = \frac{1}{c^2} 
\mathcal{F}_{k_1} \left\{\mathcal{C}_{\omega}\left\{
B({k_1},\omega) \tilde{x}({k_1},\omega)
\right\}\right\}, 
\label{eqn:FastFwd}
\end{align}
where 
$\tilde{x}({k_1},\omega)$ is obtained from $\hat{x}(k)$ via the dispersion relation $(\omega/c)^2 = k_1^2+k_2^2$ and 
$\hat{x}(k) = \mathcal{F}_{\mathbf{x}}\{x(\mathbf{x})\}$ is the 2D Fourier transform of $x(\mathbf{x})$. $\mathcal{C}_{\omega}$ is a cosine transform from $\omega$ to $t$, $\mathcal{F}_{k_1}$ is the 1D inverse Fourier Transform from $k_1$ to $\mathbf{x}_1$ on the detector. The weighting factor,
\begin{align}
B(k_1,\omega) = \omega/\left(\textrm{sgn}(\omega)\sqrt{(\omega/c)^2 - k_1^2}\right),
\end{align}
contains an integrable singularity which means that if \eqref{eqn:FastFwd} is evaluated by discretisation on a rectangular grid and thus enabling the application of FFT for efficient calculation), then aliasing in the measured data $p(\mathbf{x}_1,t)$ results. Consequently, evaluating \eqref{eqn:FastFwd} using FFT leads to a \emph{fast but approximate} forward model. In fact, we can control the degree of aliasing, by avoiding the singularity, that means in practice all components of $B$ for which $k_1^2 > (\omega/c)^2\sin^2\theta_{\max}$ are set to zero. This is equivalent to assuming only waves arriving at angles up to $\theta_{\max}$ from normal incidence are detected. 
We note, that there is a trade-off: the greater the range of angles included, the greater the aliasing. 
Finally, this results in a thresholded weighting factor $\widetilde{B}$ and hence the relation \eqref{eqn:FastFwd} using $\widetilde{B}$ defines the approximate model for this study: $\ApproxOp x = y$.

\section{Addition to theoretical results}\label{sec:appxTheo}

In this section, we only investigate the question of closeness of minimisers, without investigating if the minimisers of $\mathcal{L}_\Theta$ - that involves a nonlinear operator in the data term - can be identified efficiently using a gradient descent based algorithm. To answer this question, we will assume that the learned corrected operator $\CorrectedOp$ approximates the ground truth operator $\TrueOp$ sufficiently well, uniformly on some manifold $\mathcal{D}$ that contains the minimiser of $\mathcal{L}$. These assumptions represent the situation of a well-fit forward approximation on the data manifold $\mathcal{D}$ that we assume all relevant reconstructions to lie on. 

\revision{While it is difficult to check these assumptions in practice, the purpose of this discussion is to give a more complete theoretical view at the problem at hand, demonstrating that under sufficient assumptions closeness of forward operators is sufficient to deduce closeness of minimisers. However, this does not guarantee that the minimum can be found with a gradient descent algorithm or that a gradient descent algorithm even stays on the manifold $\mathcal{D}$ of good approximation quality. As a theoretical underpinning for the experiments conducted in this paper, Theorem \ref{thm:GradientConvergence} should hence instead be considered as a the main theorem.}

\begin{proposition}[Proximity of minimisers]\label{prop:proxMini}
	Denote by $\mathcal{D} \subset X$ the manifold of possible reconstructions that the operator approximation was trained on using empirical risk minimisation \eqref{equ:forwardLoss_FBC}. Let $\mathcal{L}$ be strongly convex.
	\revision{Assume further that $\mathcal{D}$ and the measurement noise is bounded. Hence for any $y = \TrueOp x_1 + \epsilon$ for some $x_1 \in \mathcal{D}$ and for any $x_2 \in \mathcal{D}$ 
		we have $\|Ax_2-y\|_Y \leq C$ (boundedness).}
	Let $\epsilon>0$. Denote by $\delta$ the corresponding quantity as in lemma \ref{lem:LowEnergyCloseness},
	without loss of generality let \revision{$\delta \leq 32C^2$}. Assume further $\CorrectedOp$ has been trained such that $\sup_{x \in \mathcal{D}}\|\CorrectedOp(x) - \TrueOp x\|_Y \leq \delta/4C$. Denote by
	\begin{align*}
	\hat{x} := \argmin_{x} \mathcal{L}(x), \quad \hat{x}_\Theta \in \argmin_{x} \mathcal{L}_\Theta(x)
	\end{align*}
	the reconstructions computed via the variational problem using either the accurate operator $\TrueOp$ or the corrected operator $\CorrectedOp$, respectively. Note that the \revision{minimiser} is unique for the functional $\mathcal{L}$ by strong convexity, but not necessarily for the functional $\mathcal{L}_\Theta$.
	Then for any $y \in B_r(0)$ that is such that both $\hat{x}, \hat{x}_\Theta \in \mathcal{D}$, we have
	\begin{align*}
	\| \hat{x} - \hat{x}_\Theta \|_X < \delta.
	\end{align*}
\end{proposition}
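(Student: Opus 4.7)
The plan is to control the energy gap $\mathcal{L}(\hat{x}_\Theta) - \mathcal{L}(\hat{x})$ and then invoke Lemma \ref{lem:LowEnergyCloseness} to translate this into a bound on $\|\hat{x} - \hat{x}_\Theta\|_X$. The strategy is the classical one: interpolate between the two functionals at the two minimisers so that the difference factorises into three terms, one of which vanishes by the defining minimality property of $\hat{x}_\Theta$.

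Concretely, I would first write
\begin{align*}
\mathcal{L}(\hat{x}_\Theta) - \mathcal{L}(\hat{x})
= \bigl[\mathcal{L}(\hat{x}_\Theta) - \mathcal{L}_\Theta(\hat{x}_\Theta)\bigr]
+ \bigl[\mathcal{L}_\Theta(\hat{x}_\Theta) - \mathcal{L}_\Theta(\hat{x})\bigr]
+ \bigl[\mathcal{L}_\Theta(\hat{x}) - \mathcal{L}(\hat{x})\bigr].
\end{align*}
The middle bracket is non-positive since $\hat{x}_\Theta$ minimises $\mathcal{L}_\Theta$. The other two brackets differ only in the data fidelity terms, and since the regularisation term $\lambda R(x)$ cancels in $\mathcal{L} - \mathcal{L}_\Theta$, each of them has absolute value $\tfrac{1}{2}\bigl|\|\TrueOp x - y\|_Y^2 - \|\CorrectedOp(x)-y\|_Y^2\bigr|$ evaluated at $x = \hat{x}_\Theta$ or $x = \hat{x}$, both of which lie in $\mathcal{D}$ by hypothesis.

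Next I would factorise the difference of squares via $a^2 - b^2 = (a-b)(a+b)$ and apply the reverse triangle inequality to obtain
\begin{align*}
\bigl|\|\TrueOp x - y\|_Y^2 - \|\CorrectedOp(x)-y\|_Y^2\bigr|
\leq \|(\TrueOp - \CorrectedOp)x\|_Y \cdot \bigl(\|\TrueOp x - y\|_Y + \|\CorrectedOp(x) - y\|_Y\bigr).
\end{align*}
On $\mathcal{D}$ the first factor is bounded by $\delta/(4C)$ by the approximation hypothesis, the first summand in the second factor is bounded by $C$ by the boundedness assumption, and the second summand by $C + \delta/(4C)$ via the triangle inequality. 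The restriction $\delta \leq 32 C^2$ is then used to absorb the quadratic correction term $(\delta/4C)^2$ into a linear term in $\delta$, yielding $|\mathcal{L}(x) - \mathcal{L}_\Theta(x)| \leq \delta/2$ uniformly on $\mathcal{D}$. Summing the two non-trivial brackets and discarding the non-positive middle one then gives $\mathcal{L}(\hat{x}_\Theta) - \mathcal{L}(\hat{x}) \leq \delta$, and Lemma \ref{lem:LowEnergyCloseness} concludes the proof.

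The main obstacle is the bookkeeping in the last step: one has to make sure that the constants $C$, $\delta/(4C)$, and the hidden quadratic correction actually fit together so that the resulting bound is exactly $\delta$ rather than some larger multiple. This is where the hypothesis $\delta \leq 32C^2$ and the specific factor $1/(4C)$ in the approximation tolerance come in; any looser training tolerance would force us to weaken the conclusion. A minor subtlety is that $\hat{x}_\Theta$ is only assumed to exist (not to be unique), but the argument works for \emph{any} minimiser of $\mathcal{L}_\Theta$ lying in $\mathcal{D}$, which is exactly what the statement requires.
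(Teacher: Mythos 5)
Your proposal is correct and follows essentially the same route as the paper: establish the uniform bound $|\mathcal{L}_\Theta(x)-\mathcal{L}(x)|\leq\delta/2$ on $\mathcal{D}$ via the expansion of the squared data terms, use the minimality of $\hat{x}_\Theta$ for $\mathcal{L}_\Theta$ to pass this to the energy gap $\mathcal{L}(\hat{x}_\Theta)-\mathcal{L}(\hat{x})\leq\delta$ (the paper phrases your three-term decomposition as ``by taking the minimum''), and conclude with Lemma \ref{lem:LowEnergyCloseness}. The only caveat is the constant bookkeeping you flag yourself: with the stated tolerance $\delta/(4C)$ the quadratic term $\delta^2/(32C^2)$ is only bounded by $\delta/4$ when $\delta\leq 8C^2$ rather than $32C^2$, but this harmless discrepancy is already present in the paper's own arithmetic and is immaterial since $\delta$ may always be shrunk.
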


\begin{proof}
	First note that $|\mathcal{L}_\Theta(x) - \mathcal{L}(x)| \leq \delta/2$ for any $x \in \mathcal{D}$, as 
	\begin{align*}
	|\mathcal{L}_\Theta(x) - \mathcal{L}(x)|  =& \revision{\frac{1}{2}} \left| \|\CorrectedOp(x) - y\|_Y^2 - \|\TrueOp x - y \|_Y^2 \right|
	\\
	\leq& \| \TrueOp x - y \|_Y \|\TrueOp x - \CorrectedOp(x) \|_Y +  \frac{1}{2}\|\TrueOp x - \CorrectedOp(x) \|_Y^2
	\\
	\leq& \revision{C \cdot \frac{\delta}{4C} + \delta \frac{1}{2}\frac{32C^2}{(8C)^2}} = \delta/2
	\end{align*}
	% \CBS{Can you please check the last estimate again. I get something different.} 
	By taking the minimum, this in particular implies that 
	\begin{align*}
	|\mathcal{L}_\Theta(\hat{x}_\Theta) - \mathcal{L}(\hat{x})| \leq \delta/2.
	\end{align*}
	% \CBS{This is not clear. Above you say that this holds for any $x\in \mathcal D$. But $\hat x$ is not an element in $\mathcal D$.}
	We conclude via
	% \begin{align*}
	%     &|\mathcal{L}(\hat{x}_\Theta) - \mathcal{L}(\hat{x})| \\
	%     \leq \
	%     &|\mathcal{L}(\hat{x}_\Theta) - \mathcal{L}_\Theta(\hat{x}_\Theta)| + |\mathcal{L}_\Theta(\hat{x}_\Theta) - \mathcal{L}(\hat{x})| \\
	%     \leq \ &\delta/2 + \delta/2 = \delta,
	% \end{align*}
	\begin{align*}
	|\mathcal{L}(\hat{x}_\Theta) - \mathcal{L}(\hat{x})| 
	\leq \
	|\mathcal{L}(\hat{x}_\Theta) - \mathcal{L}_\Theta(\hat{x}_\Theta)| + |\mathcal{L}_\Theta(\hat{x}_\Theta) - \mathcal{L}(\hat{x})| 
	\leq \ \delta/2 + \delta/2 = \delta,
	\end{align*}
	which finishes the proof using eq.(\ref{eqn:VaritionalAssumptionLoss}).
\end{proof}

\begin{remark}
	The assumption that $y$ is such that  $\hat{x}, \hat{x}_\Theta \in \mathcal{D}$ can be interpreted as a necessity for $y$ to have emerged from an underlying image that is close to the manifold of reconstructions $\mathcal{D}$ that the correction $\CorrectedOp$ has been trained on. Put differently, we require $y$ to be an actual realistic measurement, similar to those used to train the model correction $\CorrectedOp$.
	%\Sebastian{Is that a good phrasing?}
\end{remark}

%By inverting Eq.\ \ref{eqn:FastFwd}, it can also be used as a method for mapping from the measured data $g$ to an estimate of $f$ \cite{Koestli2001}. In this case, there is no singularity to contend with, but the estimate of $f$ will suffer from limited-view artifacts . We will denote these two k-space methods as $A_{\mathcal{F}}$ and $A_{\mathcal{F}}^\dagger$ for the forward and backward projections, respectively.

\end{document}

% --- supplement: siamonline_190516/ex_supplement.tex ---

\maketitle

\section{A detailed example}

Here we include some equations and theorem-like environments to show
how these are labeled in a supplement and can be referenced from the
main text.
Consider the following equation:
\begin{equation}
  \label{eq:suppa}
  a^2 + b^2 = c^2.
\end{equation}
You can also reference equations such as \cref{eq:matrices,eq:bb} 
from the main article in this supplement.

\lipsum[100-101]

\begin{theorem}
  An example theorem.
\end{theorem}

\lipsum[102]
 
\begin{lemma}
  An example lemma.
\end{lemma}

\lipsum[103-105]

Here is an example citation: \cite{KoMa14}.

\section[Proof of Thm]{Proof of \cref{thm:bigthm}}
\label{sec:proof}

\lipsum[106-112]

\section{Additional experimental results}
\Cref{tab:foo} shows additional
supporting evidence. 

\begin{table}[htbp]
{\footnotesize
  \caption{Example table.}  \label{tab:foo}
\begin{center}
  \begin{tabular}{|c|c|c|} \hline
   Species & \bf Mean & \bf Std.~Dev. \\ \hline
    1 & 3.4 & 1.2 \\
    2 & 5.4 & 0.6 \\ \hline
  \end{tabular}
\end{center}
}
\end{table}

\bibliographystyle{siamplain}
\bibliography{references}